\newtheorem{thm}{Theorem}[section]
\newtheorem{lem}[thm]{Lemma}
\newtheorem{cor}[thm]{Corollary}
\newtheorem{prop}[thm]{Proposition}
\newtheorem{rems}[thm]{Remarks}
\newtheorem{rem}[thm]{Remark}
\newtheorem{deff}[thm]{Definition}
\numberwithin{equation}{section}
\newcommand{\R}{\mathbb{R}}
\newcommand{\C}{\mathbb{C}}
\newcommand{\mS}{\mathbb{S}}
\newcommand{\N}{\mathbb{N}}
\newcommand{\A}{\mathbb{A}}
\newcommand{\B}{\mathbb{B}}
\newcommand{\T}{\mathbb{T}}
\newcommand{\X}{\mathbb{X}}
\newcommand{\I}{\mathbb{I}}
\newcommand{\ml}{\mathcal{L}}
\newcommand{\¦}{\Arrowvert}
\newcommand{\ve}{\varepsilon}
\newcommand{\rd}{\mathrm{d}}
\newcommand{\dom}{\mathrm{dom}}
\DeclareMathOperator*{\essinf}{ess-inf}
\newcommand{\dhr}{\mathrel{\lhook\joinrel\relbar\kern-.8ex\joinrel\lhook\joinrel\rightarrow}}
\begin{document}


\title[An Epidemic Model with Infection Age and Spatial Diffusion]{Well-Posedness and Stability Analysis of an Epidemic Model with Infection Age and Spatial Diffusion}

\author{Christoph Walker}
\email{walker@ifam.uni-hannover.de}
\address{Leibniz Universit\"at Hannover\\ Institut f\" ur Angewandte Mathematik \\ Welfengarten 1 \\ D--30167 Hannover\\ Germany}
\date{\today}

\begin{abstract}
A compartment epidemic model for infectious disease spreading is investigated, where movement of individuals is governed by spatial diffusion. The model includes infection age of the infected individuals and assumes a logistic growth of the susceptibles. Global   well-posedness of the equations within the class of nonnegative smooth solutions is shown. Moreover, spectral properties of the linearization around a steady state are derived. This yields the notion of linear stability which is used to determine stability properties of the disease-free and the endemic steady state.
\end{abstract}

\keywords{Age-strucutre; spatial diffusion;  stability of steady states}
\subjclass{35M10, 47D06, 92D30, 47A10}


\maketitle

\section{Introduction}

We consider a compartment epidemic model for infectious disease spreading. The total population is divided into susceptible and infected individuals which move in space by diffusion. Infectious individuals are structured by infective age keeping track of the time elapsed since an individual first acquires the disease. A logistic growth is assumed for the susceptibles.

Let $S(t,x)$  and $I(t,a,x)$ be the densities of susceptible and infected individuals, respectively, at time $t\ge 0$, position $x\in\Omega$, and infection age $a\in (0,a_m)$, where $\Omega\subset \R^n$ with {  $n\in\N^*$} is a bounded, smooth domain, and $a_m\in [0,\infty)$ is the maximal invective age. 
The population of susceptible individuals is assumed to obey a logistic growth with intrinsic growth rate $\kappa_1>0$ and carrying capacity $\kappa_2>0$. Susceptible individuals are infected at a rate $b(a,x)$ by  infected individuals of invective age $a$ and position $x$. Infectious individuals die naturally and disease-induced at a combined rate $\mu(a,x)\ge 0$. They may recover and enter directly the class of susceptibles at a rate $r(a,x)\ge 0$.
We shall thus focus on the equations
\begin{subequations}\label{E}
\begin{align}
\partial_t S(t,x)&=d_1\Delta S(t,x)+\kappa_1\left(1-\dfrac{1}{\kappa_2}S(t,x)\right) S(t,x) \notag\\
&\qquad -S(t,x)\int_0^{a_m} b(a,x)\,I(t,a,x)\,\rd a +\int_0^{a_m} r(a,x)\,I(t,a,x)\,\rd a\,, \label{E3}\\
D I(t,a,x)&=d(a)\Delta I(t,a,x)-m(a,x)I(t,a,x)-r(a,x)I(t,a,x)\,, \label{E1}
\\
I(t,0,x)&=S(t,x)\int_0^{a_m} b(a,x)\,I(t,a,x)\,\rd a\,, \label{E2}
\end{align}
for  $t>0$, $a\in (0,a_m)$, and $x\in \Omega$. The differentiation operator $D$ in \eqref{E1} is defined as
$$
D I(t,a,\cdot):=\lim_{h\to 0^+} \frac{1}{h}\big(I(t+h,a+h,\cdot)-I(t,a,\cdot)\big) 
$$
and is thus, if $I$ is continuously differentiable with respect to $t$ and $a$, given by
$$
DI(t,a,\cdot)=\partial_t I(t,a,\cdot)+\partial_a I(t,a,\cdot)\,.
$$
For notational simplicity we will take a susceptible diffusion rate $d_1=1$ and consider a diffusion coefficient $d=d(a)>0$ for infected individuals dependent only upon infection age (though a space dependence does not alter the subsequent results).
The equations are supplemented by the initial conditions
\begin{align}
S(0,x)=S_0(x)\,,\quad I(0,a,x)&= I_0(a,x)\,,\qquad (a,x)\in (0,a_m)\times\Omega\,, \label{E4}
\end{align}
and the boundary conditions
\begin{align}
(1-\delta)S(t,x)+\delta \partial_\nu S(t,x)= 0\,,\qquad (1-\delta)I(t,a,x)+\delta \partial_\nu I(t,a,x)&= 0 \label{E5}
\end{align}
\end{subequations}
for $(t,a,x)\in (0,\infty)\times(0,a_m)\times\partial\Omega$. Here, $\delta\in \{0,1\}$ is fixed so that $\delta=0$ corresponds to Dirichlet boundary conditions, while $\delta=1$ yields Neumann boundary conditions {  with $\partial_\nu I=\nabla I\cdot \nu$ denoting the derivative in normal direction $\nu$ on the boundary $\partial\Omega$} (we treat the two cases simultaneously).

Age-structured compartment  epidemic models  and age-structured population models in general have been investigated since many years \cite{ThiemeBook,WebbBook,WebbSpringer}. The particular case of
equations~\eqref{E} without spatial diffusion (and $r=0$) was studied in~\cite{CaoYanXu}. Therein, criteria for stability and instability of the disease-free and the endemic steady states were obtained in dependence on the corresponding basic reproduction number. Moreover, conditions for the occurrence of Hopf bifurcation were presented. 

The inclusion of spatial heterogeneity in age-structured populations leads to additional technical difficulties in the analysis. We refer to the monograph \cite{WebbSpringer} for a general treatment of and a comprehensive overview on such problems. Regarding SIS- and SIR-models there are various linear and nonlinear variants involving age and spatial structure with Laplace diffusion. The list includes the pioneering works \cite{WebbARMA80,WebbJMB82} on epidemic models including incubation periods, followed by  \cite{FitzgibbonParrotWebb95,FitzgibbonParrotWebb96,KuboLanglais94,LanglaisBusenberg97} and, more recently, \cite{ChekrounKuniya19,ChekrounKuniya20a,ChekrounKuniya20b,DiBlasio10,DucrotMagal09,DucrotMagal10,DucrotMagal11,Kim06,KuniyaOizumi15} though these references are non-exhaustive. The cited papers address various questions under different modeling hypotheses, for instance related to well-posedness of the equations, existence and stability properties of disease-free and endemic steady states, disease persistence, traveling wave solutions, or numerical simulations. We also refer to \cite{KangRuanJMB21} and the references therein for age-structured  epidemic models describing long-distance spreading of diseases by nonlocal diffusion. 

In this research we shall focus on the particular model~\eqref{E}. We first prove the existence and uniqueness of positive, global, smooth solutions by using a semigroup approach relying on results outlined in~\cite{WebbSpringer}. We then show that the linearization of these equations around a steady state yields a strongly continuous semigroup, and we use the spectrum of its generator for determining linear stability properties of steady states. Without further assumptions we prove stability or instability of the disease-free steady state in dependence on the reproduction number.
In a particular case of~\eqref{E} assuming spatially homogeneous rates and Neumann boundary conditions we improve the local stability of this steady state to global stability. Moreover,  we investigate the stability of the endemic steady state.

In the following Section~\ref{Sec2} we present our main results. Section~\ref{Sec3} is dedicated to the proof of the well-posedness of~\eqref{E}. The details on the linearized problem in the general case are given in Section~\ref{Sec4} and then applied in Section~\ref{Sec4B}. The application to a simplified version of equations~\eqref{E} with Neumann boundary conditions and spatially homogeneous rates are presented in Section~\ref{Sec5}. Some technical results are postponed to the Appendix~\ref{Appendix}.


\section{Main Results}\label{Sec2}


We first state the result on the well-posedness of~\eqref{E} and then investigate linearized stability  of steady states.


\subsection{Well-Posedness}


In order to present our existence result, we set $J:=[0,a_m]$ and take without loss of generality $d_1=1$. We assume that
\begin{subequations}\label{A}
\begin{align} 
\kappa_1\,,\kappa_2>0
\end{align}
and
\begin{align}\label{A1}
d\in C^\rho(J)\,,\qquad d(a)\ge \underline{d}>0\,,\quad a\in J\,,
\end{align}
Moreover,
\begin{align}\label{A2}
m\in C^\rho\big(J,L_\infty(\Omega)\big)\,,\quad r\in C^\rho\big(J,L_\infty(\Omega)\big)\cap L_\infty(J,C^1(\bar\Omega)\big)\,,\qquad m,r\ge 0\,,
\end{align}
for some $\rho>0$ and  
\begin{align}\label{A3}
b\in L_\infty\big(J,C^1(\bar\Omega)\big)\,, \quad b\ge 0\,,\quad b\not\equiv 0\,.
\end{align}
\end{subequations}
Here, $C^\rho$ stands for $\rho$-H\"older continuous functions. The regularity assumptions on the data are mainly imposed in order to derive smooth solutions. Denote $\R^+:=[0,\infty)$ and $\dot{\R}^+:=(0,\infty)$.\\

We shall prove the following result on the existence, uniqueness, and regularity of global, positive solutions to~\eqref{E}:

\begin{thm}\label{T1}
Assume~\eqref{A} and $p\in \big(\max\{3n/4,2\},\infty\big)$. Then, given initial values \mbox{$S_0\in L_p^+(\Omega)$} and  \mbox{$I_0\in L_1\big(J,L_p^+(\Omega)\big)$}, there is a unique positive global  solution $(S,I)$ to~\eqref{E} such that 
$$
S\in C\big(\R^+,L_p^+(\Omega)\big)\cap C^1\big(\dot{\R}^+,L_p(\Omega)\big)\cap C\big(\dot{\R}^+,W_p^2(\Omega)\big)
$$
is a strong solution to \eqref{E3}, while
$$
I\in C\big(\R^+,L_1(J,L_p^+(\Omega))\big) \cap C\big(\dot{\R}^+,L_1(J,W_{p}^{2}(\Omega))\big)
$$
satisfies \eqref{E1} in the sense that
$$
D I(t,a)=\big(d(a)\Delta -m(a,\cdot) -r(a,\cdot)\big)I(t,a,\cdot)\ \text{ in }\ L_p(\Omega)
$$
for $t>0$ and a.e. $a\in (0,a_m)$. In fact, the  solution map $(t,(S_0,I_0))\mapsto (S,I)(t)$ defines a global semiflow on $L_p^+(\Omega)\times L_1(J,L_p^+(\Omega))$.
\end{thm}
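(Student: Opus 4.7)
The plan is to reduce \eqref{E} to a coupled problem consisting of a semilinear parabolic equation for $S$ and a Volterra-type integral equation for the birth function $B(t,x):=I(t,0,x)=S(t,x)\int_0^{a_m} b(a,x)\,I(t,a,x)\,\rd a$, close it by a Banach fixed-point argument, and finally extend locally obtained solutions by a priori bounds.

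First I would introduce the parabolic evolution family $\Pi(a,\sigma)$, $0\le\sigma\le a\le a_m$, on $L_p(\Omega)$ associated with the nonautonomous operator $a\mapsto d(a)\Delta-m(a,\cdot)-r(a,\cdot)$ under the boundary conditions in \eqref{E5}; under the H\"older assumptions \eqref{A1}--\eqref{A2} this is a standard construction (and is among the tools outlined in \cite{WebbSpringer}), with $\Pi$ positivity preserving and satisfying the parabolic smoothing estimate $\|\Pi(a,\sigma)\|_{\ml(L_p(\Omega),W_p^2(\Omega))}\le C(a-\sigma)^{-1}$. Integrating \eqref{E1}--\eqref{E2} along the characteristics $s\mapsto(t+s,a+s)$ yields the representation
\begin{equation*}
I(t,a,\cdot)=\begin{cases}\Pi(a,0)\,B(t-a,\cdot),& 0\le a\le\min\{t,a_m\},\\ \Pi(a,a-t)\,I_0(a-t,\cdot),& t<a\le a_m,\end{cases}
\end{equation*}
which converts \eqref{E2} into a Volterra-type equation for $B$ alone, involving $S$ only through the product with $\int b\,I\,\rd a$. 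Combined with the mild formulation of \eqref{E3} based on the Dirichlet/Neumann heat semigroup $\{e^{t\Delta}\}_{t\ge 0}$ on $L_p(\Omega)$, this closes the system for the pair $(S,B)\in C([0,T],L_p(\Omega))\times C([0,T],L_p(\Omega))$.

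Local well-posedness would then be established by Banach's fixed-point theorem on a suitable closed ball in this space. The assumption $p>\max\{3n/4,2\}$ enters via the heat kernel estimate $\|e^{t\Delta}\|_{\ml(L_{p/2}(\Omega),L_p(\Omega))}\le C t^{-n/(2p)}$ applied to the quadratic term $S\int b\,I\,\rd a$, producing a time singularity that is integrable in $t$ precisely under the stated restriction on $p$; the logistic term $S^2$ is handled analogously. Nonnegativity of $(S,I)$ follows from the positivity of both $e^{t\Delta}$ and $\Pi(a,\sigma)$ together with a standard truncation of the nonlinearity on the negative cone: one solves the truncated problem, verifies that the constructed solution stays nonnegative, and concludes that it also solves the original one. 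The regularity $S\in C(\dot{\R}^+,W_p^2(\Omega))\cap C^1(\dot{\R}^+,L_p(\Omega))$ and the corresponding regularity of $I$ then follow from maximal $L_p$-regularity of the heat semigroup and from the parabolic smoothing of $\Pi$.

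To extend the local solution to a global one, a priori $L_p$-bounds are needed. The logistic source in \eqref{E3} together with $-S\int b\,I\,\rd a\le 0$ yields, by comparison with the spatially homogeneous logistic ODE plus the nonnegative perturbation $\int r\,I\,\rd a$, an a priori bound on $\|S(t)\|_\infty$ on compact time intervals. Inserting the representation of $I$ into the definition of $B$ and using $b\in L_\infty(J\times\Omega)$ then produces a linear Volterra inequality for $\|B(t)\|_{L_p(\Omega)}$, controlled by Gronwall's lemma, which bounds $\int_0^{a_m}\|I(t,a,\cdot)\|_{L_p(\Omega)}\,\rd a$ uniformly in time. These bounds exclude finite-time blow-up and deliver the claimed global semiflow. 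The main obstacle is the quadratic, spatially and age-nonlocal feedback $S\int b\,I\,\rd a$ that couples both equations simultaneously: balancing the Sobolev exponents against the time singularities arising from parabolic smoothing so that the fixed-point map is a strict contraction while still producing the $W_p^2$-regularity asserted in the theorem is the principal technical difficulty.
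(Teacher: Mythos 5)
Your local-existence architecture is essentially the paper's: the paper also integrates along characteristics against a parabolic evolution operator $U_A(a,\sigma)$ and runs Banach's fixed point theorem, the only (cosmetic) difference being that it takes the pair $(S,I)\in C([0,T],L_p(\Omega)\times L_1(J,L_p(\Omega)))$ as the fixed-point variable rather than $(S,B)$. The positivity and regularity steps are also in the spirit of the paper, although you misplace where the hypothesis $p>\max\{3n/4,2\}$ is actually needed: the contraction estimate only uses $\|e^{t\Delta}\|_{\ml(L_{p/2},L_p)}\le Ct^{-n/2p}$ with $n/2p<1$, i.e.\ $p>n/2$. The stronger restriction $p>3n/4$ enters in the regularity bootstrap, where one needs a $\theta$ with $n/2p<2\theta<2-n/p$ so that $(S,I)(t)$ lands in $W_p^{2\theta}$ and the pointwise multiplication $W_p^{2\theta}\times W_p^{2\theta}\to W_p^{2\alpha}$, $\alpha>0$, makes the nonlinearity $W_p^{2\alpha}$-valued; only then does one get strong $W_p^2$-solutions. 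You flag this as ``the principal technical difficulty'' but do not resolve it.

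The genuine gap is in the global-existence step, and it is a circularity. You propose to first bound $\|S(t)\|_\infty$ by comparison with the logistic ODE ``plus the nonnegative perturbation $\int_0^{a_m} r\,I\,\rd a$'', and only afterwards to close a \emph{linear} Volterra inequality for $B$. But the recovery term $+\int_0^{a_m} r(a,x)I(t,a,x)\,\rd a$ is a source in \eqref{E3} that cannot be discarded in an upper bound; controlling it requires an a priori bound on $I$, which in your scheme is obtained only after $S$ is bounded (the Volterra inequality for $B=S\int b\,I\,\rd a$ is linear in $I$ only once $\|S\|_\infty$ is known). If you try to run both bounds simultaneously, the inequality for $\|I\|$ becomes quadratic (coefficient $\|S(t)\|_\infty$ itself growing with $\int\|I\|$), and Gronwall no longer excludes finite-time blow-up. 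Theorem~\ref{T1} assumes only $r\ge 0$, not $r\equiv 0$, so this is not a removable technicality. The paper breaks the circle with the integrated mass balance \eqref{L1id}: integrating the full system over $\Omega\times J$, the loss $-S\int b\,I\,\rd a$ in \eqref{E3} cancels against the birth term \eqref{E2}, and $+\int r\,I\,\rd a$ cancels against the sink $-rI$ in \eqref{E1}, leaving only the logistic source bounded by $\kappa_1\kappa_2/4$. This yields an unconditional $L_1$-bound on $S+\int I$, which Lemma~\ref{L2w} then bootstraps through $L_1\to L_r\to L_1(J,L_p(\Omega))\to L_p(\Omega)$ using parabolic smoothing. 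Some cancellation or conservation structure of this kind is indispensable and is missing from your argument.
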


The proof of Theorem~\ref{T1} is based on a semigroup representation of solutions in the spirit of~\cite{WebbSpringer} and on Banach's fixed point theorem. It is performed in several steps in Section~\ref{Sec3}. In fact, it is not restricted to the particular nonlinearities in~\eqref{E} and may rather be a template for similar problems. That the  solution map defines a global semiflow  paves the way to consider qualitative aspects of the model.


\subsection{Linearization Around Steady States}


Assume \eqref{A} and fix an arbitrary steady state $(S_*,I_*)$ to~\eqref{E}, i.e. a time-independent solution. The regularizing effects of the Laplacian implies that we may assume without loss of generality the regularity
\begin{equation}\label{ss1}
\begin{split}
&S_*\in W_{p}^{2}(\Omega)\,, \quad S_*>0 \ \text{ in }\ \Omega\,,\\
&I_*\in L_1\big(J,W_{p}^{2}(\Omega)\big)\cap W_1^1\big(J,L_p(\Omega)\big)\,,\quad I_*\ge 0  \ \text{ in }\ J\times\Omega\,,
\end{split}
\end{equation}
for some $p>n$.
The linearization of~\eqref{E} around the steady state~$(S_*,I_*)$ is
\begin{subequations}\label{LIN1}
\begin{align}
\partial_t S(t,x)&=\Delta S(t,x)+\kappa_1S(t,x)- \dfrac{2\kappa_1 S_*(x)}{\kappa_2} S(t,x)-S(t,x)\int_0^{a_m} b(a,x)\,I_*(a,x)\,\rd a \nonumber\\
&\quad  -S_*(x)\int_0^{a_m} b(a,x)\,I(t,a,x)\,\rd a +\int_0^{a_m} r(a,x)\,I(t,a,x)\,\rd a \,, \label{E3al1}\\
D I(t,a,x)&=d(a)\Delta I(t,a,x)-\big(m(a,x)+r(a,x)\big) I(t,a,x)\,, \label{E1al1}
\\
I(t,0,x)&=S_*(x)\int_0^{a_m} b(a,x)\,I(t,a,x)\,\rd a +S(t,x)\int_0^{a_m} b(a,x)\,I_*(a,x)\,\rd a\,, \label{E2al1}
\end{align}
for $(t,a, x)\in \R^+\times (0,a_m)\times \Omega$, and subject to the initial conditions
\begin{align}
S(0,x)=S_0(x)\,,\quad I(0,a,x)&= I_0(a,x)\,,\qquad (a,x)\in (0,a_m)\times\Omega\,, \label{E4al1}
\end{align}
and boundary conditions
\begin{align}
(1-\delta)S(t,x)+\delta \partial_\nu S(t,x)= 0\,,\qquad (1-\delta)I(t,a,x)+\delta\partial_\nu I(t,a,x)&= 0 \label{E5al1}
\end{align}
\end{subequations}
for $(t,a,x)\in (0,\infty)\times(0,a_m)\times\partial\Omega$. We shall show that the solutions $(S,I)$ to the linearization~\eqref{LIN1} are given by a strongly continuous semigroup on $L_p(\Omega)\times L_1(J,L_p(\Omega))$ with compact resolvent:

\begin{thm}\label{T3a}
Suppose~\eqref{A}  and $p>(2\vee n)$. Let $(S_*,I_*)$ be a steady state to~\eqref{E} satisfying~\eqref{ss1}. Then, the solution $(S,I)$ to the linearized equation~\eqref{LIN1} is given as $$(S,I)(t)=\mS_*(t)(S_0,I_0)\,,\quad t\ge 0\,,$$ where $(\mS_*(t))_{t\ge 0}$ is a strongly continuous semigroup on $L_p(\Omega)\times L_1(J,L_p(\Omega))$. Its generator has a compact resolvent and thus a pure point spectrum without finite accumulation point.
\end{thm}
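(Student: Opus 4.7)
The plan is to recast the linear system~\eqref{LIN1} as an abstract Cauchy problem $\dot u = \mathcal{A}_* u$ on the product space $E := L_p(\Omega) \times L_1(J, L_p(\Omega))$, where $\mathcal{A}_*$ acts as the right-hand sides of \eqref{E3al1}--\eqref{E1al1} on a domain encoding the spatial Dirichlet/Neumann condition~\eqref{E5al1}, the natural regularity ($S \in W_p^2(\Omega)$ and $I \in L_1(J, W_p^2(\Omega)) \cap W_1^1(J, L_p(\Omega))$), together with the nonlocal age-boundary coupling~\eqref{E2al1}.

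The first step is to show that $\mathcal{A}_*$ generates a strongly continuous semigroup via a two-step perturbation argument. Consider the \emph{decoupled} operator $\mathcal{A}_0$ obtained by setting all cross-terms between the $S$- and $I$-equations to zero. On the $S$-component, $\Delta$ with the prescribed boundary condition generates an analytic semigroup on $L_p(\Omega)$; on the $I$-component, $I \mapsto -\partial_a I + d(a)\Delta I - (m+r) I$ with vanishing age trace $I(0) = 0$ and spatial boundary condition generates a $C_0$ semigroup on $L_1(J, L_p(\Omega))$ by Webb's theory~\cite{WebbSpringer}. The full operator $\mathcal{A}_*$ differs from $\mathcal{A}_0$ by (i) the bounded zeroth-order reaction terms in~\eqref{E3al1}, absorbed by standard bounded perturbation under~\eqref{A}, and (ii) the nonzero boundary value at $a = 0$ in~\eqref{E2al1}, which I would handle either through a Desch--Schappacher boundary perturbation or, more concretely, by a Banach fixed-point argument modeled on the nonlinear construction of Section~\ref{Sec3}: parametrize $I(t, a)$ via the parabolic evolution operator generated by $d(a)\Delta - (m+r)$ and solve the resulting Volterra equation for the age trace $I(t, 0)$. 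I expect step (ii) to be the main technical obstacle, since the age-boundary value does not act boundedly on $E$ and requires an extrapolation-space or mild-formulation argument.

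For the compact resolvent, fix $\lambda$ in the resolvent set and solve $(\lambda - \mathcal{A}_*)(S, I) = (f, g)$. The $S$-equation is an elliptic boundary value problem, so $S \in W_p^2(\Omega)$ which embeds compactly into $L_p(\Omega)$ by Rellich--Kondrachov. For $I$, integrating the first-order age equation against the parabolic evolution operator $U(a, \sigma)$ associated with $d(a)\Delta - (m+r)$ yields the representation
\[
I(a) = e^{-\lambda a}\, U(a, 0)\, I(0) + \int_0^a e^{-\lambda(a - \sigma)}\, U(a, \sigma)\, g(\sigma)\,\rd\sigma\,,
\]
with $I(0)$ determined by the coupling~\eqref{E2al1} (uniquely solvable for $\lambda$ large by a Neumann series on $L_p(\Omega)$). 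Parabolic regularity for $d(a)\Delta$ and the structure of this formula place $I$ in $L_1(J, W_p^2(\Omega)) \cap W_1^1(J, L_p(\Omega))$, which embeds compactly into $L_1(J, L_p(\Omega))$ by an Aubin--Lions-type argument. Hence $(\lambda - \mathcal{A}_*)^{-1}$ is compact on $E$. Finally, a closed densely defined operator with nonempty resolvent set and compact resolvent has, by Riesz--Schauder, a spectrum consisting only of eigenvalues of finite algebraic multiplicity without finite accumulation point, completing the proof.
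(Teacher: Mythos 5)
Your generation argument is essentially the paper's: the ``more concrete'' option you describe for the age--boundary coupling is exactly the renewal (Volterra) equation construction~\eqref{100} taken from \cite{WalkerIUMJ}, and the bulk cross-terms are absorbed as a bounded perturbation. The only organizational difference is that the paper does not start from the decoupled operator with $I(0)=0$: it builds the entire boundary law $I(t,0)=P_*I+q_*S$ (including the coupling to $S$ through $q_*$) into the unperturbed semigroup $\T_*$ of Theorem~\ref{T0}, and treats only the bounded bulk operator $\B_*=\bigl(\begin{smallmatrix}0 & -P_*+N\\ 0&0\end{smallmatrix}\bigr)$ as a perturbation, so that compactness of the resolvent of $\A_*+\B_*$ follows from that of $\A_*$ via \cite[III.~Proposition~1.12]{EngelNagel}.

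The genuine gap is in your compactness argument for the $I$-component of the resolvent. From the representation
\[
I(a)=e^{-\lambda a}\,U(a,0)\,I(0)+\int_0^a e^{-\lambda(a-\sigma)}\,U(a,\sigma)\,g(\sigma)\,\rd\sigma
\]
with $g$ merely in $L_1(J,L_p(\Omega))$ you cannot conclude $I\in L_1(J,W_p^2(\Omega))\cap W_1^1(J,L_p(\Omega))$: by~\eqref{EST} one only has $\|U(a,\sigma)\|_{\ml(L_p(\Omega),W_p^{2}(\Omega))}\lesssim (a-\sigma)^{-1}$, which is not integrable, and maximal regularity for parabolic problems fails in $L_1$ in the time (here: age) variable. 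The convolution term therefore lands only in $L_1(J,W_p^{2\theta}(\Omega))$ for $\theta<1$ and in $C(J,L_p(\Omega))$, with no $L_1(J,L_p(\Omega))$-control on $\partial_a I$, so the Aubin--Lions step has no hypothesis to bite on. This is precisely why the paper proves relative compactness of the family $u_j(a)=\int_0^a U_A^\lambda(a,\sigma)I_{0,j}(\sigma)\,\rd\sigma$ by hand: it approximates $u_j$ by $v_j^\mu(a)=U_A^\lambda(\mu+a,a)u_j(a)$, verifies pointwise relative compactness and equi-integrability of $\{v_j^\mu\}$ for fixed $\mu>0$ (a Kolmogorov-type criterion adapted from \cite{BarasHassanVeron}), and then shows $v_j^\mu\to u_j$ in $L_1(J,L_p(\Omega))$ uniformly in $j$ as $\mu\to 0$. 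The term $U_A^\lambda(\cdot,0)I(0)$ also needs a separate treatment: one first shows that $I(0)=\psi(0)$ gains a derivative --- it is bounded in $W_{p,\mathcal{B}}^1(\Omega)$ thanks to the smoothing of $S_*Q^\lambda$ and the invertibility of $1-S_*Q^\lambda$ --- and then applies Arzel\`a--Ascoli; with $I(0)$ only in $L_p(\Omega)$ even this term fails to lie in $L_1(J,W_p^2(\Omega))$. One could perhaps salvage an Aubin--Lions--Simon route by controlling $\partial_a I$ in an extrapolation space of negative order, but as written the regularity claim is false and the compactness step does not go through.
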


Theorem~\ref{T3a} is a consequence of Theorem~\ref{T0} and Corollary~\ref{C43} from Section~\ref{Sec4}. There, we also present more precise information on the semigroup and its generator. Note that the semigroup $(\mS_*(t))_{t\ge 0}$ lacks positivity and thus less information on the spectrum of its generator  is available in general. We refer to Remark~\ref{R1a} for further details. \\

{  

\subsection{Linear Stability}


Due to Theorem~\ref{T3a}, one may characterize stability properties of steady states based on the linearization of \eqref{E} around these steady states. That linearized stability indeed determines the (asymptotic) stability of steady states in certain nonlinear population models including age- and spatial structure has recently been shown in \cite{WalkerInstabiliy,WalkerZehetbauer}. We refrain, however, to prove this for problem~\eqref{E}.  

Herein, we shall just call a steady state $(S_*,I_*)$  {\it linearly stable} if the generator of the semigroup associated  with the linearization~\eqref{LIN1} of \eqref{E} (given in Theorem~\ref{T3a}) has a spectrum lying entirely in the half plane $\mathrm{Re}\, \lambda<0$ while we call the steady state {\it linearly unstable} if there is a spectral point in the  half plane $\mathrm{Re}\, \lambda>0$ (see Definition~\ref{DefStable} and Remark~\ref{R1a} below for more details in this regard). 

We assume for simplicity that
\begin{equation}\label{Aa}
r\equiv 0 \,.
\end{equation} 
We provide  a stability analysis in $L_p(\Omega)\times L_1(J,L_p(\Omega))$ of the trivial and the disease-free steady states. To state precise results we introduce  the principal eigenvalue  $\mu_0$ of the Laplacian $-\Delta$ on $\Omega$ subject to either Dirichlet boundary conditions (hence $\mu_0>0$) or Neumann boundary conditions (hence $\mu_0=0$).

\begin{thm}\label{TX}
Suppose~\eqref{A}, \eqref{Aa},  and let $p>(2\vee n)$.

\begin{itemize}
\item[{\bf (a)}] The trivial steady state $(S_*,I_*)=(0,0)$ to \eqref{E} is linearly unstable in $L_p(\Omega)\times L_1(J,L_p(\Omega))$ if $\kappa_1>\mu_0$ and linearly stable if $\kappa_1<\mu_0$.\vspace{1mm}

\item[{\bf (b)}] There is a disease-free steady state $(S_*,I_*)=(\tilde S_*,0)$ to \eqref{E} with a smooth function $\tilde S_*>0$ if and only if $\kappa_1>\mu_0$.  In this case, the  disease-free steady state $(S_*,I_*)=(\tilde S_*,0)$ is unique, and there is a number $\mathsf{R}_0>0$ such that it is linearly stable in \mbox{$L_p(\Omega)\times L_1(J,L_p(\Omega))$} if $\mathsf{R}_0<1$ and linearly unstable if~$\mathsf{R}_0>1$. \vspace{1mm}

\item[{\bf (c)}] Let $\kappa_1>\mu_0$. There is no endemic state $(S_*,I_*)$ to \eqref{E} with $S_*, I_*\ge 0$ and $I_*\not\equiv 0$ if $\mathsf{R}_0\le 1$.
\end{itemize}
\end{thm}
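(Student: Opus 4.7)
My plan exploits the fact that when $r\equiv 0$ the linearization~\eqref{LIN1} at $(0,0)$ and at $(\tilde S_*,0)$ is upper block-triangular: neither \eqref{E1al1} nor the boundary condition \eqref{E2al1} involves $S$, so the $I$-component evolves autonomously and drives the $S$-component. By Theorem~\ref{T3a} the generator has compact resolvent and pure point spectrum, which therefore decomposes as the union of the spectra of the $S$- and $I$-blocks, both again with compact resolvent. I will analyse each block separately for the two relevant steady states.

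For part~(a), with $S_*=I_*=0$ the system decouples completely. The $S$-block is the self-adjoint operator $\Delta+\kappa_1$ under \eqref{E5al1}, whose largest eigenvalue is $\kappa_1-\mu_0$; this gives linear instability iff $\kappa_1>\mu_0$ and stability if $\kappa_1<\mu_0$. The $I$-block has homogeneous boundary $I(t,0,\cdot)=0$, and its eigenvalue equation $\lambda\Phi+\partial_a\Phi=(d(a)\Delta-m(a,\cdot))\Phi$ with $\Phi(0,\cdot)=0$ admits only the zero solution by uniqueness for the operator-valued ODE in $a$, so the $I$-block has no eigenvalue. Existence and uniqueness of $\tilde S_*>0$ in part~(b) is a classical fact for the logistic elliptic problem $\Delta\tilde S_*+\kappa_1(1-\tilde S_*/\kappa_2)\tilde S_*=0$: a positive solution exists iff $\kappa_1>\mu_0$, for instance by combining bifurcation from the zero branch at $\kappa_1=\mu_0$ with a sub-/supersolution argument; testing against the principal eigenfunction of $-\Delta$ rules out $\kappa_1\le\mu_0$.

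For the stability of $(\tilde S_*,0)$ in part~(b), the $S$-block is $\Delta+\kappa_1-2\kappa_1\tilde S_*/\kappa_2$. Since $\tilde S_*>0$ is the positive principal eigenfunction of $\Delta+\kappa_1-\kappa_1\tilde S_*/\kappa_2$ at eigenvalue zero, strict monotonicity of the principal eigenvalue of a self-adjoint Schr\"odinger operator in its potential forces the $S$-block to have strictly negative spectrum. For the $I$-block I use the ansatz $I(t,a,x)=e^{\lambda t}\Phi(a,x)$, so that $\Phi(a,\cdot)=U_\lambda(a)\Phi(0,\cdot)$ for the evolution system $U_\lambda(a)$ on $L_p(\Omega)$ generated by $d(a)\Delta-m(a,\cdot)-\lambda$ under \eqref{E5al1}; the boundary condition \eqref{E2al1} then reduces to $\Phi(0,\cdot)=\mk_\lambda\Phi(0,\cdot)$ with
\begin{equation*}
\mk_\lambda\phi:=\tilde S_*\int_0^{a_m}b(a,\cdot)\,U_\lambda(a)\phi\,\rd a,
\end{equation*}
a positive, compact, irreducible operator on $L_p(\Omega)$. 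I then set $\mathsf{R}_0:=\rho(\mk_0)$, the spectral radius. Using $|\mk_\lambda\phi|\le\mk_{\mathrm{Re}\,\lambda}|\phi|$ together with the strict monotonicity and continuity of $s\mapsto\rho(\mk_s)$ on $\R$ and $\rho(\mk_s)\to 0$ as $s\to+\infty$, one concludes: if $\mathsf{R}_0<1$ then $\rho(\mk_\lambda)<1$ for every $\mathrm{Re}\,\lambda\ge 0$, so no such $\lambda$ is an eigenvalue of the $I$-block, while if $\mathsf{R}_0>1$ there is a unique $\lambda_*>0$ with $\rho(\mk_{\lambda_*})=1$, and Krein--Rutman produces a corresponding positive eigenvector, giving an unstable eigenvalue. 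The main obstacle in this step is the rigorous verification of these spectral properties of $\mk_\lambda$ as a function of $\lambda\in\C$, which rests on the strong maximum principle for the $a$-parabolic equation behind $U_\lambda(a)$ together with the positivity assumption on $b$.

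For part~(c), I argue by contradiction, assuming an endemic state $(S_*,I_*)$ with $\mathsf{R}_0\le 1$. The $S_*$-equation reads $\Delta S_*+\kappa_1(1-S_*/\kappa_2)S_*=S_*\int_0^{a_m}b(a,\cdot)I_*\,\rd a\ge 0$, so $S_*$ is a positive subsolution of the logistic problem. Since $S_*\equiv 0$ would force $I_*(0,\cdot)=0$ and then $I_*\equiv 0$ via the age evolution, one has $S_*>0$, and standard elliptic comparison with $\tilde S_*$ yields $S_*\le\tilde S_*$, with strict inequality on a set of positive measure because the source $S_*\int b I_*$ is not identically zero. Writing $I_*(a,\cdot)=U_0(a)I_*(0,\cdot)$ as in part~(b), $I_*(0,\cdot)$ is a positive fixed point of
\begin{equation*}
T_{S_*}\phi:=S_*\int_0^{a_m}b(a,\cdot)\,U_0(a)\phi\,\rd a.
\end{equation*}
Pointwise domination $T_{S_*}\le\mk_0$, strict on a set of positive measure, combined with the strict monotonicity of the Krein--Rutman spectral radius under strict domination of positive irreducible operators, gives $\rho(T_{S_*})<\rho(\mk_0)=\mathsf{R}_0\le 1$, contradicting the existence of a positive fixed point of $T_{S_*}$.
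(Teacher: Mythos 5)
Your proposal is correct and follows essentially the same route as the paper: the same block-triangular reduction of the linearized eigenvalue problem, the same characteristic operator $\tilde S_*Q^\lambda$ (your $\mk_\lambda$) with $\mathsf{R}_0$ defined as its spectral radius at $\lambda=0$, the same principal-eigenvalue monotonicity argument for the $S$-block, and the same comparison $S_*\le\tilde S_*$ combined with strict Krein--Rutman monotonicity of the spectral radius for part (c). The only (harmless) deviations are that you treat complex $\lambda$ with $\mathrm{Re}\,\lambda\ge 0$ via the modulus domination $|\mk_\lambda\phi|\le\mk_{\mathrm{Re}\,\lambda}|\phi|$ where the paper instead cites a spectral-bound result for the associated positive semigroup, and that you derive $S_*\le\tilde S_*$ by sub-/supersolution comparison for the logistic problem where the paper uses resolvent positivity and the maximum principle.
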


The proof of Theorem~\ref{TX} is presented in Section~\ref{Sec4B}.
The reproduction number  $\mathsf{R}_0>0$ is defined in \eqref{R0} and corresponds to the spectral radius of a compact irreducible operator (defined in~\eqref{Qs}) depending on $\tilde S_*$. It is open whether there is an endemic state $(S_*,I_*)$ with $S_*, I_*\ge 0$ and $I_*\not\equiv 0$  if~$\mathsf{R}_0>1$ (see Remark~\ref{R5} in this regard). However, the existence of an endemic steady state in case that~$\mathsf{R}_0>1$ is easily obtained when assuming Neumann boundary conditions and spatially homogeneous rates $m$ and $b$ as shown in the next subsection.
}


\subsection{Linear Stability in a Particular Model with Neumann Boundary Conditions}

\begin{subequations}\label{AA1}
We give a more detailed account of Theorem~\ref{TX} in the particular case of 
Neumann boundary conditions 
\begin{align}
\delta=1\,, 
\end{align}
so that $\mu_0=0$, and spatially homogeneous data 
\begin{align}\label{A1c1}
&d\in C(J)\,,\quad d(a)\ge \underline{d}>0\,,\quad a\in J\,,\\
&m\in C(J)\,,\quad m\ge 0\,, \qquad b\in L_\infty(J)\,,\quad  b> 0\,,\label{A2c1}
\end{align}
\end{subequations}
that is, data only depending on age $a$. 
In this particular situation, besides the trivial steady state $(S_*,I_*)=(0,0)$ and the disease-free steady state $(S_*,I_*)=(\kappa_2,0)$, there is also an \emph{endemic steady state} $(\bar S_*,\bar I_*)$ provided that  $\mathsf{R}_0>1$, where
\begin{align}\label{R00}
\mathsf{R}_0 :=\kappa_2\int_0^{a_m} b(a) \Pi(a)\, \rd a 
\end{align}
with
$$
\Pi(a):=\exp\left(-\int_0^a m(\sigma)\,\rd \sigma\right)\,,\quad a\in J\,.
$$
It is given as
$$
\bar S_*:=\frac{\kappa_2}{\mathsf{R}_0 }\,,\qquad \bar I_*(a):= \frac{1}{\mathsf{R}_0 } \kappa_1\kappa_2\left(1-\frac{1}{\mathsf{R}_0 }\right)\Pi(a)\,,\quad a\in J\,.
$$
Linear stability and instability  of these steady states is determined by the basic reproduction number $\mathsf{R}_0$:

\begin{thm}\label{T2}
Assume~\eqref{Aa}, \eqref{AA1}, let $p>(2\vee n)$, and let $\mathsf{R}_0>0$ be defined in~\eqref{R00}. \vspace{1mm}

\begin{itemize}
\item[{\bf (a)}] The trivial steady state $(S_*,I_*)=(0,0)$ is linearly unstable in $L_p(\Omega)\times L_1(J,L_p(\Omega))$.\vspace{1mm}

\item[{\bf (b)}] If $\mathsf{R}_0 <1$, then the disease-free steady state $(S_*,I_*)=(\kappa_2,0)$ is globally linearly stable in $L_p(\Omega)\times L_1(J,L_p(\Omega))$; that is, it is linearly stable and attracts any solution starting from positive initial values. If $\mathsf{R}_0 >1$, then $(S_*,I_*)=(\kappa_2,0)$ is linearly unstable in $L_p(\Omega)\times L_1(J,L_p(\Omega))$. \vspace{1mm}

\item[{\bf (c)}] For $1<\mathsf{R}_0 <3$, the endemic steady state $(\bar S_*,\bar I_*)$ is linearly stable in $L_p(\Omega)\times L_1(J,L_p(\Omega))$.
\end{itemize}
\end{thm}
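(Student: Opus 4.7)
The spatial homogeneity of $d,m,b$ together with $\delta=1$ makes the linearization~\eqref{LIN1} commute with the spectral projections of the Neumann Laplacian on $\Omega$: writing $-\Delta_N$ with eigenvalues $0=\mu_0<\mu_1\le\mu_2\le\cdots\to\infty$ and a complete system of smooth eigenfunctions $\{\vp_j\}$, and noting that each steady state considered is $x$-independent, \eqref{LIN1} decouples along the $\vp_j$: on the $j$-th eigenspace one obtains a scalar-in-space, age-structured problem in which $\Delta$ is replaced by $-\mu_j$. Since by Theorem~\ref{T3a} the generator has compact resolvent, its spectrum is the union of the point spectra of these reduced problems, and I would determine linear (in)stability mode by mode. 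Part~(a) is then immediate: at $(0,0)$, the mode $j=0$ of the $S$-equation reads $\dot S_0=\kappa_1 S_0$, producing the positive eigenvalue $\lambda=\kappa_1$.

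For part~(b), at $(\kappa_2,0)$ the term $I_*\equiv 0$ decouples~\eqref{E1al1}--\eqref{E2al1} from~\eqref{E3al1}. Solving the reduced $I$-equation along characteristics with the ansatz $I_j(t,a)=e^{\lambda t}\psi(a)$ and inserting the result into the boundary condition produces the characteristic equation
\begin{equation*}
F_j(\lambda):=\kappa_2\int_0^{a_m} b(a)\,e^{-\lambda a}\,\Pi(a)\,\Lambda_j(a)\,\rd a = 1,\qquad \Lambda_j(a):=\exp\!\Bigl(-\mu_j\int_0^a d(\sigma)\,\rd\sigma\Bigr).
\end{equation*}
The map $F_j$ is strictly decreasing on $\R$ with $F_j(\lambda)\to 0$ as $\lambda\to\infty$ and, since $\Lambda_j\le 1$, satisfies $F_j(0)\le F_0(0)=\mathsf{R}_0$. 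The elementary bound $|F_j(\lambda)|\le F_j(\mathrm{Re}\,\lambda)$ then rules out roots in $\{\mathrm{Re}\,\lambda\ge 0\}$ as soon as $\mathsf{R}_0<1$, while for $\mathsf{R}_0>1$ the equation $F_0(\lambda)=1$ possesses a positive real root. The $S$-subsystem only contributes the safe eigenvalues $-\mu_j-\kappa_1<0$, which yields the claimed linear dichotomy.

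The global-attractivity assertion in~(b) is nonlinear. I would apply the parabolic maximum principle to~\eqref{E3} against the logistic supersolution to obtain $\limsup_{t\to\infty}\|S(t,\cdot)\|_{L_\infty(\Omega)}\le\kappa_2$; the Neumann condition then allows integration of~\eqref{E1}--\eqref{E2} in $x$ and gives an age-structured renewal inequality for $\bar I(t,a):=\int_\Omega I(t,a,x)\,\rd x$ whose boundary datum is eventually bounded above by $(\kappa_2+\ve)\int b\,\bar I\,\rd a$. For $\mathsf{R}_0<1$ and $\ve$ small the associated linear renewal operator has spectral radius strictly less than one, so $\bar I(t,\cdot)\to 0$ in $L_1(J)$; parabolic smoothing upgrades this to convergence in $L_1(J,L_p(\Omega))$. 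With $I(t)\to 0$ the $S$-equation becomes asymptotically the logistic reaction-diffusion equation with stable equilibrium $\kappa_2$, to which $S(t,\cdot)$ converges by standard sub-/supersolution arguments.

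For part~(c), the linearization at $(\bar S_*,\bar I_*)$ genuinely couples $S$ and $I$. The same mode-wise procedure, now with a nontrivial source term coming from $\bar I_*$, produces after elimination the characteristic equation
\begin{equation*}
\Bigl(\lambda+\mu_j+\tfrac{\kappa_1}{\mathsf{R}_0}\Bigr)\Bigl(1-\tfrac{\kappa_2}{\mathsf{R}_0}q_j(\lambda)\Bigr)+\kappa_1\Bigl(1-\tfrac{1}{\mathsf{R}_0}\Bigr)\tfrac{\kappa_2}{\mathsf{R}_0}q_j(\lambda)=0,\quad q_j(\lambda):=\int_0^{a_m} b\,e^{-\lambda a}\,\Pi\,\Lambda_j\,\rd a.
\end{equation*}
For $j=0$ one has $\mu_0=0$ and $\Lambda_0\equiv 1$, and this reduces to the characteristic equation of the spatially homogeneous model of~\cite{CaoYanXu}, whose roots are proved there to lie in $\{\mathrm{Re}\,\lambda<0\}$ for $1<\mathsf{R}_0<3$. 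For $j\ge 1$ the exponential decay of $|e^{-\lambda a}|$ furnishes an a priori bound on $|\lambda|$ for any root in the closed right half-plane, and a Rouch\'e/continuity argument along the homotopy $\mu_0\rightsquigarrow\mu_j$ shows that no root crosses the imaginary axis as $\mu_j$ grows, since the additional damping $-\mu_j$ can only move roots to the left. The main obstacle will be exactly this last step: the bound $1<\mathsf{R}_0<3$ is essentially sharp already at mode~$0$, so the continuity argument must be carried out carefully enough to exclude any single root crossing $\{\mathrm{Re}\,\lambda=0\}$ as $\mu_j$ varies.
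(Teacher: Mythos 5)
Your treatment of parts (a) and (b) is sound and close in spirit to the paper's. For (a) and the local dichotomy in (b) the paper argues through the spectral radius $r(\tilde S_*Q^\lambda)$ and the abstract results of Lemma~\ref{L0} and Proposition~\ref{P100x}, whereas you diagonalize along the Neumann eigenfunctions and read off the scalar characteristic equations $F_j(\lambda)=1$; because the rates are spatially homogeneous the two are equivalent (indeed $F_j(\lambda)$ is exactly the action of $\tilde S_*Q^\lambda$ on the $j$-th eigenspace), and your monotonicity argument $|F_j(\lambda)|\le F_j(\mathrm{Re}\,\lambda)\le F_0(0)=\mathsf{R}_0$ is correct. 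Your sketch of global attractivity for $\mathsf{R}_0<1$ — logistic supersolution for $S$, comparison of $I$ with a linear renewal problem with birth rate $(\kappa_2+\ve)b$ of negative spectral bound, then sub-/supersolutions to send $S\to\kappa_2$ — is essentially the paper's Proposition~\ref{P100}; the only cosmetic difference is that you integrate in $x$ first and recover $L_p$ convergence by smoothing, while the paper runs the comparison directly in $L_1(J,W_{p,N}^{2\alpha}(\Omega))$.

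Part (c) contains a genuine gap. Your characteristic equation is correct (it is an algebraic rearrangement of the paper's equation~\eqref{characteristicequation} with $\mathcal{R}_{\lambda,\mu_j}=\tfrac{\kappa_2}{\mathsf{R}_0}q_j(\lambda)$), but the proposed homotopy/Rouch\'e argument in $\mu_j$ is not carried out, and the heuristic that ``the additional damping $-\mu_j$ can only move roots to the left'' is exactly the assertion that needs proof: $\mu_j$ enters both additively through $\lambda+\mu_j$ and multiplicatively through $\Lambda_j$ inside $q_j(\lambda)$, and for renewal-type characteristic equations such monotonicity of root locations is not automatic. The paper does not need any continuation argument; it disposes of all modes $j\ge 0$ at once by a direct estimate. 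Writing $\mathsf{r}_0=1/\mathsf{R}_0$ and $\zeta=(\lambda+\mu_j)/\kappa_1=\alpha+i\beta$, a root with $\mathrm{Re}\,\lambda\ge 0$ forces $\alpha\ge 0$, hence $|\mathcal{R}_{\lambda,\mu_j}|\le\mathcal{R}_{0,0}=\mathsf{r}_0\mathsf{R}_0=1$, while the characteristic equation gives
\begin{equation*}
\frac{1}{|\mathcal{R}_{\lambda,\mu_j}|^2}=\Bigl|1-\frac{1-\mathsf{r}_0}{\zeta+\mathsf{r}_0}\Bigr|^2
=1+\frac{(\mathsf{r}_0-1)(2\alpha+3\mathsf{r}_0-1)}{(\alpha+\mathsf{r}_0)^2+\beta^2}<1
\end{equation*}
whenever $1/3<\mathsf{r}_0<1$, i.e. $1<\mathsf{R}_0<3$ — a contradiction. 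Replacing your continuation step by this computation (which also makes the citation of the diffusion-free analysis for mode $0$ unnecessary) closes the argument.
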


Part (a) and the local stability statements of part (b) of Theorem~\ref{T2} have been observed already in Theorem~\ref{TX}. The proofs of the remaining statements  of Theorem~\ref{T2} are given in Section~\ref{Sec5}. In fact, when $\mathsf{R}_0 <1$ we prove  that
$$
\lim_{t\to\infty} \big(S(t),I(t)\big)=(\kappa_2,0)\ \text{ in }\ L_p(\Omega)\times L_1\big(J, C(\bar\Omega)\big)
$$
for any solution $(S,I)$ to \eqref{E}  corresponding to positive nontrivial initial values $(S_0,I_0)$ so that there is no further steady state in this case (in accordance with Theorem~\ref{TX}~(c)). Clearly, one expects $(\bar S_*,\bar I_*)$ to be linearly stable whenever $\mathsf{R}_0 >1$.



\section{Well-Posedness: Proof of Theorem~\ref{T1}}\label{Sec3}

We prove Theorem~\ref{T1} in several steps. After introducing some notation we derive the existence of a local solution then establish further properties.

\subsection{Preliminaries $\&$ Notation}

{  For two Banach spaces $E$ and $F$ we write $\ml(E,F)$ for the Banach space of bounded linear operators from $E$ to $F$, and we set $\ml(E):=\ml(E,E)$. Similarly, $\mathcal{K}(E,F)$ and $\mathcal{K}(E)$ stand for compact linear operators.}

For fixed $\delta\in\{0,1\}$ and $p\in (1,\infty)$, we set
$$
\mathcal{B}u:=u  \ \text{ on } \ \partial\Omega \ \text{ if } \ \delta=0\,,\qquad \mathcal{B}u:=\partial_\nu u\ \text{ on } \ \partial\Omega \ \text{ if } \ \delta=1\,,
$$
and introduce the scale of Banach spaces
\begin{equation}\label{interpol}
 W_{p,\mathcal{B}}^{2\theta}(\Omega):=\left\{\begin{array}{ll} \{v\in W_{p}^{2\theta}(\Omega)\,;\; \mathcal{B} w=0 \text{ on } \partial\Omega\}\,, & \delta +\frac{1}{p}<2\theta\le 2\,,\\[3pt]
	 W_{p}^{2\theta}(\Omega)\,, & 0\le 2\theta<\delta +\frac{1}{p}\,.\end{array} \right.
\end{equation}
By $\Delta_\mathcal{B} $ we denote the Laplacian defined on $W_{p,\mathcal{B}}^2(\Omega)$.  Moreover, for fixed $a\in J$, also the operator
\begin{align}\label{Aaaa}
A(a):=d(a)\Delta_\mathcal{B} -m(a,\cdot)-r(a,\cdot)
\end{align}
has domain $W_{p,\mathcal{B}}^2(\Omega)$. Then $\Delta_\mathcal{B} $ and $A(a)$
are generators of positive analytic contraction semigroups on $L_p(\Omega)$ for each $p\in (1,\infty)$ \cite{AmannIsrael, Rothe}. In fact, since 
$$
A\in C^\rho\big(J,\ml(W_{p,\mathcal{B}}^2(\Omega),L_p(\Omega))\big)\,,
$$ 
it follows from \cite[II.Corollary~4.4.2]{LQPP} that $A$ generates a positive parabolic evolution operator 
$$
U_A(a,\sigma)\,,\quad 0\le\sigma\le a\le a_m\,,
$$ 
on $L_p(\Omega)$ in the sense of \cite[II.Section~2.1]{LQPP}. In particular,
$$
v(a):=U_{A}(a,\sigma)v^0\,,\quad a\in [\sigma,a_m]\,,
$$ 
is, for  given $\sigma\in [0,a_m)$ and $v^0\in L_p(\Omega)$, the unique solution
$$
v\in C\big([\sigma,a_m],L_p(\Omega)\big) \cap C^1\big((\sigma,a_m],L_p(\Omega)\big)\cap C\big((\sigma,a_m),W_{p,\mathcal{B}}^2(\Omega)\big)
$$
to the Cauchy problem
$$
\partial_a v(a)=A(a) v(a)\,,\quad a\in (\sigma,a_m)\,,\qquad v(\sigma)=v^0\,.
$$
The contraction properties
\begin{equation}
\begin{split}\label{EST2}
\|e^{t\Delta_\mathcal{B}}\|_{\ml(L_p(\Omega))}&\le 1\,,\quad t\ge 0\,,\\
\|U_A(a,\sigma)\|_{\ml(L_p(\Omega))} &\le  e^{-\int_\sigma^a\underline{m}(r)\,\rd r}\,\,,\quad 0\le\sigma\le a\in J\,,
\end{split}
\end{equation}
are valid, where 
$$
\underline{m}(a):=\essinf_{x\in \Omega} m(a,x)\ge 0\,,\quad a\in J\,.
$$
{  Recall the interpolation relations 
$$
\big(L_p(\Omega),W_{p,\mathcal{B}}^2(\Omega)\big)_{\theta,p}\doteq 
W_{p,\mathcal{B}}^{2\theta}(\Omega)\,,\quad 2\theta\in [0,2]\setminus\left\{1,\delta+\frac{1}{p}\right\}\,,
$$ 
with real interpolation functor $(\cdot,\cdot)_{\theta,p}$ and $$
\big[L_p(\Omega),W_{p,\mathcal{B}}^2(\Omega)\big]_{1/2}\doteq 
W_{p,\mathcal{B}}^{1}(\Omega)
$$ with complex interpolation functor $[\cdot,\cdot]_{1/2}$ (see~\cite[4.4.3/Theorem]{Triebel}).  From~\cite[II.~Lemma~5.1.3]{LQPP} and the embedding $W_{p,\mathcal{B}}^{2\theta}(\Omega)\hookrightarrow L_q(\Omega)$ for $\theta=\frac{n}{2}(\frac{1}{p}-\frac{1}{q})$ we then infer parabolic regularizing properties in the sense that, given $0\le\vartheta\le\theta\le 1$ with $2\vartheta, 2\theta\notin\{\delta+\frac{1}{p}\}$ and $1<p\le q\le\infty$, there are $\varpi\in \R$ and $M\ge 1$ such that
\begin{align}\label{EST1}
t^{\theta-\vartheta}\,\|e^{t\Delta_\mathcal{B}}\|_{\ml(W_{p,\mathcal{B}}^{2\vartheta}(\Omega),W_{p,\mathcal{B}}^{2\theta}(\Omega))}+ t^{\frac{n}{2}(\frac{1}{p}-\frac{1}{q})}\,\|e^{t\Delta_\mathcal{B}}\|_{\ml(L_p(\Omega),L_q(\Omega))} \le M \, e^{\varpi t}\,,\quad t>0\,,
\end{align}
and
\begin{align}\label{EST}
(a-\sigma)^{\theta-\vartheta}\, \|U_A(a,\sigma)\|_{\ml(W_{p,\mathcal{B}}^{2\vartheta}(\Omega),W_{p,\mathcal{B}}^{2\theta}(\Omega))}+ (a-\sigma)^{\frac{n}{2}(\frac{1}{p}-\frac{1}{q})}\, \|U_A(a,\sigma)\|_{\ml(L_p(\Omega),L_q(\Omega))}\le M \, e^{\varpi(a-\sigma)}\,.
\end{align}
}

Let $p\in \big(\max\{\frac{3n}{4},2\},\infty\big)$ and let $S_0\in L_p(\Omega)$ and $I_0\in L_1(J,L_p(\Omega))$ be fixed in the following.

\subsection{Existence of a Unique Maximal Solution}  
Given $S\in L_p(\Omega)$ and $I\in L_1(J,L_p(\Omega))$ we  use the abbreviations (dropping $x$-dependence for simplicity)
$$
B[S,I]:=S\int_0^{a_m} b(a)\,I(a)\,\rd a\,,\qquad R[I]:=\int_0^{a_m} r(a)\,I(a)\,\rd a\,,
$$
and
$$
 f[S,I]:=\kappa_1\left(1-\frac{S}{\kappa_2}\right) S-B[S,I]+R[I]\,,
$$
whereas, for time-dependent functions $S:[0,T]\to L_p(\Omega)$ and $I:[0,T]\to L_1(J,L_p(\Omega))$, it is convenient to abbreviate
$$
B[S,I](t):=B[S(t),I(t)]\,,\quad f[S,I](t):=f[S(t),I(t)]\,,\qquad t\in [0,T]\,.
$$
Then \eqref{E} can be written compactly as
\begin{subequations}\label{EE}
\begin{align}
\partial_t S(t)&=\Delta_\mathcal{B} S(t) +f[S,I](t)\,,\qquad t>0\,,\label{EE3}\\
D I(t,a)&=A(a)I(t,a)\,,\qquad t>0\,,\quad a\in J\,, \label{EE1}\\
I(t,0)&=B[S,I](t)\,,\qquad t>0 \,, \label{EE2}
\end{align}
subject to the initial conditions
\begin{align}
S(0)=S_0\,,\qquad I(0,a)&= I_0(a)\,,\quad a\in J\,.\label{EE4}
\end{align}
\end{subequations}
Solutions $S$ to \eqref{EE3} are of the form
\[
\mathcal{S}[S,I](t):= e^{t\Delta_\mathcal{B}}S_0+\displaystyle\int_0^t e^{(t-\tau)\Delta_\mathcal{B}}f[S,I](\tau)\,\rd \tau\,,\quad t>0\,,
\]
while integrating \eqref{EE1} subject to \eqref{EE2} formally along characteristics (and recalling the properties of the evolution operator $U_A$) yields a solution $I$ in the form
\[
\mathcal{I}[S,I](t,a):=\left\{\begin{array}{ll}
U_A(a,a-t) I_0(a-t)\,,& a>t\,, \ a\in J\,,\\[2pt]
U_A(a,0)B[S,I](t-a) \,,&a\le t\,,\ a\in J\,.
\end{array} \right.
\]
Given $T>0$ we introduce the Banach space
\[
\X_T:=C\big([0,T],L_p(\Omega)\times L_1(J,L_p(\Omega))\big)
\]
and define
\[
\mathcal{Y}[S,I](t):=\big(
\mathcal{S}[S,I](t) \,,\, \mathcal{I}[S,I](t,\cdot)\big)\,,\qquad t\in [0,T]\,,\quad (S,I)\in \X_T\,.
\]
Then, fixed points $(S,I)$ of $\mathcal{Y}$ correspond to solutions of \eqref{EE}. In order to prove that $\mathcal{Y}$ has a fixed point we first note:

\begin{lem}\label{L1w}
For $q=\frac{p}{2}$, the mappings
\begin{equation}\label{31}
B:L_p(\Omega)\times L_1\big(J,L_p(\Omega)\big)\to L_{q}(\Omega)\,,\qquad f:L_p(\Omega)\times L_1\big(J,L_p(\Omega)\big)\to L_{q}(\Omega)
\end{equation} 
are uniformly Lipschitz continuous on bounded sets. Moreover, if $2\theta>n/p$, then there is $\alpha>0$ such that
$$B:W_p^{2\theta}(\Omega)\times L_1\big(J,W_p^{2\theta}(\Omega)\big)\to W_p^{2\alpha}(\Omega)\,,\qquad f:W_p^{2\theta}(\Omega)\times L_1\big(J,W_p^{2\theta}(\Omega)\big)\to W_p^{2\alpha}(\Omega)$$ are uniformly Lipschitz continuous on bounded sets.
\end{lem}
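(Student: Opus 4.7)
The plan is to exploit the bilinear product structure of $B$ and the polynomial-plus-linear structure of $f$, reducing everything to H\"older's inequality in $L_p$ for the first assertion, and to the Banach algebra property of $W_p^{2\theta}(\Omega)$ (when $2\theta>n/p$) for the second.

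First I would dispose of the integral operators. Since $b,r\in L_\infty\big(J,L_\infty(\Omega)\big)$ by \eqref{A2}--\eqref{A3}, the Bochner integrals $I\mapsto \int_0^{a_m} b(a,\cdot)\,I(a,\cdot)\,\rd a$ and $R[I]$ define bounded linear operators from $L_1\big(J,L_p(\Omega)\big)$ into $L_p(\Omega)$, with norms controlled by $\|b\|_\infty$ and $\|r\|_\infty$, respectively. For the first assertion, note that with $q=p/2$ one has $1/q=1/p+1/p$, so pointwise multiplication is a continuous bilinear map $L_p(\Omega)\times L_p(\Omega)\to L_q(\Omega)$ by H\"older. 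Consequently $B[S,I]=S\cdot\int b\,I\,\rd a$ lies in $L_q(\Omega)$. For $f$ one additionally uses $S,R[I]\in L_p(\Omega)\hookrightarrow L_q(\Omega)$ (boundedness of $\Omega$) and $S^2\in L_q(\Omega)$ again by H\"older. Lipschitz continuity on bounded sets then follows from the standard bilinear splitting
\[
B[S_1,I_1]-B[S_2,I_2]=(S_1-S_2)\int_0^{a_m} b(a)\,I_1(a)\,\rd a\,+\,S_2\int_0^{a_m} b(a)\,(I_1-I_2)(a)\,\rd a,
\]
combined with the identity $S_1^2-S_2^2=(S_1+S_2)(S_1-S_2)$ for the quadratic term in $f$.

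For the second assertion, the hypothesis $2\theta>n/p$ yields the Sobolev embedding $W_p^{2\theta}(\Omega)\hookrightarrow C(\bar\Omega)$, and in fact $W_p^{2\theta}(\Omega)$ is a Banach algebra under pointwise multiplication. Choosing $\alpha\in(0,\theta]$ (for instance $\alpha=\theta$), multiplication is a continuous bilinear map $W_p^{2\theta}(\Omega)\times W_p^{2\theta}(\Omega)\to W_p^{2\alpha}(\Omega)$. Moreover, since $b\in L_\infty\big(J,C^1(\bar\Omega)\big)$ and multiplication by a $C^1$-function is a bounded operator on $W_p^{2\theta}(\Omega)$ (uniformly in $a\in J$), the integral operators $I\mapsto\int b\,I\,\rd a$ and $R$ are bounded from $L_1\big(J,W_p^{2\theta}(\Omega)\big)$ into $W_p^{2\theta}(\Omega)\hookrightarrow W_p^{2\alpha}(\Omega)$. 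Combining these ingredients, the same bilinear splitting as above delivers both the mapping properties of $B$ and $f$ and their Lipschitz continuity on bounded sets.

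The only delicate point is verifying that multiplication by the $C^1$-kernel $b(a,\cdot)$ preserves $W_p^{2\theta}(\Omega)$ uniformly in $a\in J$: for $2\theta\le 1$ this is classical $C^1$-multiplier theory, while for $1<2\theta\le 2$ one can either invoke the embedding $C^1(\bar\Omega)\hookrightarrow W_p^{2\theta}(\Omega)$ together with the Banach algebra property, or sacrifice a bit of regularity and work with a smaller $\alpha<\theta$. In either case the conclusion is the same, and the remainder of the argument is routine bookkeeping of bilinear estimates.
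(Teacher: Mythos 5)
Your proof is correct and follows essentially the same route as the paper, which simply invokes the continuity of the pointwise multiplications $L_p(\Omega)\times L_p(\Omega)\to L_{p/2}(\Omega)$ (H\"older) and $W_p^{2\theta}(\Omega)\times W_p^{2\theta}(\Omega)\to W_p^{2\alpha}(\Omega)$ for some $\alpha>0$ (Amann's multiplication theorem), together with the regularity assumptions on $b$ and $r$; the bilinear splitting you use for the Lipschitz estimates is the implicit standard step. One caveat: the claimed embedding $C^1(\bar\Omega)\hookrightarrow W_p^{2\theta}(\Omega)$ is false for $2\theta>1$, but your fallback of accepting a smaller $\alpha<\theta$ (which is all the lemma asserts, and what the cited multiplication theorem delivers) closes that alternative, so no gap remains.
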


\begin{proof}
The statements readily follow from the regularity assumptions~\eqref{A} and the fact that pointwise multiplications 
\begin{equation}\label{mult}
L_p(\Omega)\times L_p(\Omega)\to L_{p/2}(\Omega)\quad \text{ and }\quad W_p^{2\theta}(\Omega)\times W_p^{2\theta}(\Omega)\to W_p^{2\alpha}(\Omega)
\end{equation} 
are continuous for some $\alpha>0$, see \cite[Theorem~4.1]{AmannMultiplication}.
\end{proof}

\begin{prop}\label{P1}
Given $R>0$ there is $T=T(R)>0$ such that, if $$\|S_0\|_{L_p(\Omega)}+\|I_0\|_{L_1(J,L_p(\Omega))}<R\,,$$ then
\[
\mathcal{Y}:\bar\B_{\X_T}(0,R)\to \bar\B_{\X_T}(0,R)
\]
has a unique fixed point~$(S,I)$.
\end{prop}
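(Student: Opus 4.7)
The strategy is to apply Banach's fixed point theorem on the complete metric space $\bar\B_{\X_T}(0,R)$. One needs to verify (i) the self-map property for $T$ sufficiently small, and (ii) strict contractivity on that ball. Both properties come from combining the Lipschitz bounds of Lemma~\ref{L1w} (whose crucial feature is that the nonlinearities $B$ and $f$ land in the smaller space $L_{p/2}(\Omega)$) with the smoothing estimates~\eqref{EST1}--\eqref{EST} for $e^{t\Delta_\mathcal{B}}$ and $U_A$, which compensate exactly this loss of integrability.

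For the self-map property, I would first estimate the $\mathcal{S}$-component. The contraction estimate in~\eqref{EST2} controls $\|e^{t\Delta_\mathcal{B}}S_0\|_{L_p}$ by $\|S_0\|_{L_p}$, while~\eqref{EST1} applied with source exponent $p/2$ and target exponent $p$ gives
\[
\Bigl\|\int_0^t e^{(t-\tau)\Delta_\mathcal{B}} f[S,I](\tau)\,\rd\tau\Bigr\|_{L_p(\Omega)} \le M e^{\varpi T}\int_0^t (t-\tau)^{-n/(2p)} \|f[S,I](\tau)\|_{L_{p/2}}\,\rd\tau \le c_1(R)\, T^{\beta} e^{\varpi T},
\]
where $\beta := 1 - n/(2p) > 0$, which holds since $p > 3n/4$ implies $p > n/2$; the Lipschitz bound $\|f[S,I](\tau)\|_{L_{p/2}} \le c(R)$ on the ball of radius $R$ has been used. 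For the $\mathcal{I}$-component I would split $\mathcal{I}[S,I](t,a)$ along $a > t$ and $a \le t$. The first piece is controlled in $L_1(J,L_p)$ by the evolution contraction in~\eqref{EST2} and the substitution $\sigma = a - t$, giving at most $\|I_0\|_{L_1(J,L_p)}$. The second piece is estimated identically to the $\mathcal{S}$-integral: \eqref{EST} provides the analogous smoothing of $U_A$ from $L_{p/2}$ to $L_p$ with the same weight $a^{-n/(2p)}$, and integration in $a\in[0,\min(t,a_m)]$ produces again a factor $T^{\beta} e^{\varpi T}$ times a constant $c_2(R)$. Continuity in $t$ follows from strong continuity of both $e^{t\Delta_\mathcal{B}}$ and $U_A(\cdot,\cdot)$ together with dominated convergence, so $\mathcal{Y}[S,I]\in\X_T$ with
\[
\|\mathcal{Y}[S,I]\|_{\X_T} \le \|S_0\|_{L_p(\Omega)} + \|I_0\|_{L_1(J,L_p(\Omega))} + c_3(R)\, T^{\beta} e^{\varpi T}.
\]
Since the initial data have norm strictly less than $R$, choosing $T=T(R)$ small enough closes the gap and yields $\mathcal{Y}[S,I]\in\bar\B_{\X_T}(0,R)$.

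The contraction estimate is obtained by repeating the identical argument on the differences $\mathcal{Y}[S_1,I_1] - \mathcal{Y}[S_2,I_2]$. The Lipschitz conclusions of Lemma~\ref{L1w} supply
\[
\|f[S_1,I_1](\tau)-f[S_2,I_2](\tau)\|_{L_{p/2}} + \|B[S_1,I_1](\tau)-B[S_2,I_2](\tau)\|_{L_{p/2}} \le L(R)\,\|(S_1,I_1)(\tau)-(S_2,I_2)(\tau)\|_{L_p\times L_1(J,L_p)},
\]
and the same time-integral bound gives a global Lipschitz constant of order $L(R)\,T^{\beta} e^{\varpi T}$, which is strictly less than $1$ after further shrinking $T$. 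Banach's fixed point theorem then produces the unique fixed point $(S,I)\in\bar\B_{\X_T}(0,R)$.

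The only delicate point is the mismatch of spaces caused by the quadratic terms $S\int b\,I\,\rd a$ and $(\kappa_1/\kappa_2)S^2$: the products sit only in $L_{p/2}(\Omega)$, so neither the Duhamel integral for $\mathcal{S}$ nor the boundary value $B[S,I]$ fed into the evolution operator $U_A$ would by itself return an $L_p$-valued object. The parabolic smoothing encoded in~\eqref{EST1}--\eqref{EST} resolves this, and the quantitative condition $p>n/2$ (a fortiori from $p>3n/4$) is exactly what renders the weight $(t-\tau)^{-n/(2p)}$ integrable and turns $T^\beta$ into a small gain as $T\to 0^+$, driving both the self-map and contraction conclusions.
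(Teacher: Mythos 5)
Your proposal is correct and follows essentially the same route as the paper: Banach's fixed point theorem on $\bar\B_{\X_T}(0,R)$, with the self-map and contraction properties both extracted from the Lipschitz bounds of Lemma~\ref{L1w} into $L_{p/2}(\Omega)$ combined with the $L_{p/2}\to L_p$ smoothing in~\eqref{EST1}--\eqref{EST}, yielding the small factor $T^{1-n/2p}$. The only place the paper is more careful is the continuity in $t$ of the $\mathcal{I}$-component, where the term $I_0(\cdot-t_1)-I_0(\cdot-t_2)$ requires the strong continuity of translations on $L_1(J,L_p(\Omega))$ rather than plain dominated convergence, but this is a minor refinement of the same argument.
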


\begin{proof}
Let $T\in (0,1)$ and $\|S_0\|_{L_p(\Omega)}+\|I_0\|_{L_1(J,L_p(\Omega))}<R$.  Considering $(S,I), (\tilde S,\tilde I)\in {\X_T}$ both with norm less than $R$, we have 
$$
f[S,I]\in C\big([0,T],L_{p/2}(\Omega)\big)
$$ 
by Lemma~\ref{L1w} so that we readily obtain that $\mathcal{S}[S,I]\in C\big([0,T],L_{p}(\Omega)\big)$ by~\eqref{EST1} since  $t\mapsto t^{-n/2p} $ is integrable on $(0,T)$ as $2p>n$. Moreover,
\begin{align}\label{q1}
\|\mathcal{S}[S,I](t)\|_{L_{p}(\Omega)}\le \|S_0\|_{L_p(\Omega)}+c(R)\, T^{1-n/2p} \,,\quad t\in [0,T]\,,
\end{align}
and
\begin{align}\label{q2}
\|\mathcal{S}[S,I](t)-\mathcal{S}[\tilde S, \tilde I](t)\|_{L_{p}(\Omega)}\le c(R)\, T^{1-n/2p}\,\|(S,I)-(\tilde S,\tilde I)\|_{\X_T}\,,\quad t\in [0,T]\,.
\end{align}
From \eqref{EST2}-\eqref{EST} and Lemma~\ref{L1w} we infer\footnote{Here and in the following, if   $t>a_m$, then integrals $\int_0^t\rd a$ equal $\int_0^{a_m}\rd a$ and integrals $\int_0^{a_m-t}\rd a$ vanish.}
\begin{align}
||\mathcal{I}[S,I](t)\|_{L_1(J,L_p(\Omega))}&\le \int_0^t\|U_A(a,0)\|_{\ml(L_{p/2}(\Omega),L_p(\Omega))}\,\|B[S,I](t-a)\|_{L_{p/2}(\Omega)}\,\rd a\nonumber\\
&\qquad + \int_t^{a_m}\|U_A(a,a-t)\|_{\ml(L_p(\Omega))}\,\|I_0(a-t)]\|_{L_p(\Omega)}\,\rd a\nonumber\\
&\le c(R) T^{1-n/2p}+\|I_0\|_{L_1(J,L_p(\Omega))}\label{q3}
\end{align}
for $t\in [0,T]$, and similarly
\begin{align}
\|\mathcal{I}[S,I](t)-&\mathcal{I}[\tilde S, \tilde I](t)\|_{L_{p}(\Omega)}\nonumber\\
&\le \int_0^t\|U_A(a,0)\|_{\ml(L_{p/2}(\Omega),L_p(\Omega))}\,\|B[S,I](t-a)-B[\tilde S,\tilde I](t-a)\|_{L_{p/2}(\Omega)}\,\rd a \nonumber \\
&\le c(R)\, T^{1-n/2p}\,\|(S,I)-(\tilde S,\tilde I)\|_{\X_T}\,. \label{q4}
\end{align}
To check continuity we use \eqref{EST2}-\eqref{EST} together with Lemma~\ref{L1w} and write, for $0\le t_2\le t_1\le T$,
\begin{align*}
||\mathcal{I}[S,I](t_1)&-\mathcal{I}[S,I](t_2)\|_{L_1(J,L_p(\Omega))}\\
&\le \int_0^{t_2}\|U_A(a,0)\|_{\ml(L_{p/2}(\Omega),L_p(\Omega))}\,\|B[S,I](t_1-a)-B[S,I](t_2-a)\|_{L_{p/2}(\Omega)}\,\rd a\\
&\quad +\int_{t_2}^{t_1}\|U_A(a,0)\|_{\ml(L_{p/2}(\Omega),L_p(\Omega))}\,\|B[S,I](t_1-a)\|_{L_{p/2}(\Omega)}\,\rd a\\
&\quad + \int_{t_2}^{t_1}\|U_A(a,a-t_2)\|_{\ml(L_{p}(\Omega))}\,\|I_0(a-t_2)\|_{L_p(\Omega)}\,\rd a\\
&\quad + \int_{t_1}^{a_m}\big\| \big(U_A(a,a-t_1)-U_A(a,a-t_2)\big) I_0(a-t_1)\|_{L_p(\Omega)}\,\rd a\\
&\quad + \int_{t_1}^{a_m} \| U_A(a,a-t_2)\|_{\ml(L_p(\Omega))}\,\| I_0(a-t_1)-I_0(a-t_2)\|_{L_p(\Omega)}\,\rd a\\
&\le Me^\varpi \int_0^{t_2} a^{-n/2p}\,\|B[S,I](t_1-a)-B[S,I](t_2-a)\|_{L_{p/2}(\Omega)}\,\rd a\\
&\quad +c(R) \int_{t_2}^{t_1}a^{-n/2p}\,\rd a + \int_{t_2}^{t_1}\|I_0(a-t_2)\|_{L_p(\Omega)}\,\rd a\\
&\quad + \int_{t_1}^{a_m}\big\| \big(U_A(a,a-t_1)-U_A(a,a-t_2)\big) I_0(a-t_1)\|_{L_p(\Omega)}\,\rd a\\
&\quad + \int_{t_1}^{a_m} \| I_0(a-t_1)-I_0(a-t_2)\|_{L_p(\Omega)}\,\rd a\,.
\end{align*}
Now, as $\vert t_1-t_2\vert \to 0$, the first integral on the right-hand side goes to zero since the function $B[S,I]\in C\big([0,T],L_{p/2}(\Omega)\big)$ is uniformly continuous while the second and the third integral vanish since $a\mapsto a^{-n/2p}$ respectively $I_0$ are integrable. To see that the fourth integral vanishes in the limit one may use the strong continuity of the evolution operator $U_A$ on $L_p(\Omega)$ \cite[Equation~II.~(2.1.2)]{LQPP} and Lebesgue's theorem. Finally, for the last integral one may use the strong continuity of the translations on $L_1(J,L_p(\Omega))$.
Consequently, $\mathcal{I}[S,I]\in C\big([0,T],L_1(J,L_p(\Omega))\big)$.

Summarizing, we have shown in \eqref{q1}-\eqref{q4} that, given $$
\|S_0\|_{L_p(\Omega)}+\|I_0\|_{L_1(J,L_p(\Omega))}<R\,,
$$ 
we can choose $T=T(R)\in (0,1)$ such that
\[
\mathcal{Y}:\bar\B_{\X_T}(0,R)\to \bar\B_{\X_T}(0,R)
\]
is a contraction, and the claim follows from Banach's fixed point theorem.
\end{proof}

Since $T=T(R)$ in the proof of Proposition~\ref{P1} depends only upon $$R>\|S_0\|_{L_p(\Omega)}+\|I_0\|_{L_1(J,L_p(\Omega))}\,,$$ it is standard to extend  $(S,I)$ to a maximal solution and to show that the solution map defines a semiflow:

\begin{cor}\label{C1a}
$(S,I)$ can be extended to a maximal interval $[0,T_m)$ such that 
$$(S,I)\in C\big([0,T_m),L_p(\Omega)\times L_1(J,L_p(\Omega))\big)$$ satisfies
\begin{equation}\label{S}
S(t)= e^{t\Delta_\mathcal{B}}S_0+\displaystyle\int_0^t e^{(t-\tau)\Delta_\mathcal{B}}f[S,I](\tau)\,\rd \tau\,,\quad t\in [0,T_m)\,,
\end{equation}
and
\begin{equation}\label{I}
I(t,a)=\left\{\begin{array}{ll}
U_A(a,a-t) I_0(a-t)\,,& a\ge t\,, \ \quad (a,t)\in J\times [0,T_m)\,,\\[2pt]
U_A(a,0)B[S,I](t-a) \,,&a< t\,,\ \quad (a,t)\in J\times [0,T_m)\,.
\end{array} \right.
\end{equation}
If $T_m<\infty$, then
\begin{equation}\label{T}
\lim_{t\nearrow T_m}\big(\|S(t)\|_{L_p(\Omega)}+\|I(t,\cdot)\|_{L_1(J,L_p(\Omega))}\big)=\infty\,.
\end{equation}
Moreover, the mapping $\big(t,(S_0,I_0)\big)\mapsto (S,I)(t)$ defines a semiflow on $L_p(\Omega)\times L_1(J,L_p(\Omega))$.
\end{cor}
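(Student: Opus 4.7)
\medskip

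\noindent\textbf{Proof proposal.} The plan is to run the classical continuation argument built on Proposition~\ref{P1} and on the uniqueness statement therein.

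First, I would define
$$
T_m:=\sup\big\{T>0\,;\ \text{there exists a fixed point of } \mathcal{Y} \text{ in some ball of } \X_T\big\}\in (0,\infty]\,,
$$
and show that a unique solution $(S,I)$ exists on $[0,T_m)$ and satisfies the integral representations \eqref{S}--\eqref{I}. The construction is iterative. Starting from $(S_0,I_0)$, Proposition~\ref{P1} yields a solution on $[0,T_1]$ with $T_1=T(R_0)$ for some $R_0>\|S_0\|_{L_p(\Omega)}+\|I_0\|_{L_1(J,L_p(\Omega))}$. Taking $(\tilde S_0,\tilde I_0):=(S(T_1),I(T_1,\cdot))\in L_p(\Omega)\times L_1(J,L_p(\Omega))$ as new initial datum (with $R_1>\|\tilde S_0\|_{L_p(\Omega)}+\|\tilde I_0\|_{L_1(J,L_p(\Omega))}$) and invoking Proposition~\ref{P1} again gives a solution on $[T_1,T_1+T(R_1)]$. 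The concatenation belongs to $C\big([0,T_1+T(R_1)],L_p(\Omega)\times L_1(J,L_p(\Omega))\big)$ and satisfies \eqref{S}--\eqref{I}: indeed, for $t\ge T_1$ the Duhamel formula splits at $T_1$, while the characteristics integration for $\mathcal{I}$ is consistent because the boundary datum $B[S,I]$ is defined on the whole time interval. Uniqueness on overlapping intervals is guaranteed by Proposition~\ref{P1}. Iterating this, and taking the union over all such extensions, yields the maximal solution on $[0,T_m)$.

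Next, for the blow-up alternative \eqref{T}, I would argue by contradiction. Suppose $T_m<\infty$ and
$$
\liminf_{t\nearrow T_m}\big(\|S(t)\|_{L_p(\Omega)}+\|I(t,\cdot)\|_{L_1(J,L_p(\Omega))}\big)=:\kappa<\infty\,.
$$
Pick a sequence $t_n\nearrow T_m$ with $\|S(t_n)\|_{L_p(\Omega)}+\|I(t_n,\cdot)\|_{L_1(J,L_p(\Omega))}\le \kappa+1=:R$. Applying Proposition~\ref{P1} with initial datum $(S(t_n),I(t_n,\cdot))$ produces a solution on $[t_n,t_n+T(R+1)]$. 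For $n$ so large that $t_n+T(R+1)>T_m$, uniqueness lets us paste this solution onto the original one, contradicting the maximality of $T_m$. Hence \eqref{T} holds. An analogous contradiction even with $\limsup$ replaced by $\lim$ follows since continuity of the norm on $[0,T_m)$ forces the $\lim$ to exist in $[0,\infty]$ once the $\liminf$ is infinite.

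Finally, the semiflow property consists of three points: $(S,I)(0;S_0,I_0)=(S_0,I_0)$, which is built into \eqref{S}--\eqref{I}; the cocycle identity
$$
(S,I)\big(t+s;S_0,I_0\big)=(S,I)\big(t;(S,I)(s;S_0,I_0)\big)\,,
$$
which is immediate from uniqueness applied to both sides on the common interval; and joint continuity in $(t,(S_0,I_0))$. For the latter, continuity in $t$ on $[0,T_m)$ is part of the construction, while continuity with respect to the initial datum on any compact subinterval $[0,T]\subset[0,T_m(S_0,I_0))$ follows by revisiting the contraction estimates \eqref{q1}--\eqref{q4} in the proof of Proposition~\ref{P1}: on a ball $\bar{\B}_{\X_T}(0,R)$ containing two solutions associated with nearby initial data $(S_0^1,I_0^1)$ and $(S_0^2,I_0^2)$, subtracting the fixed-point identities and iterating finitely many intervals of length $T(R)$ yields a bound of the form
$$
\|(S^1,I^1)-(S^2,I^2)\|_{\X_T}\le C(R,T)\big(\|S_0^1-S_0^2\|_{L_p(\Omega)}+\|I_0^1-I_0^2\|_{L_1(J,L_p(\Omega))}\big)\,.
$$
Combined with \eqref{T}, this also shows that $T_m$ is lower semicontinuous in the initial data, so the domain of definition of the semiflow is open.

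I expect the main (though still routine) obstacle to be the bookkeeping in the concatenation step, specifically verifying that after pasting, the boundary condition \eqref{EE2} and the characteristic representation \eqref{I} remain consistent across the junction times $T_1,T_1+T(R_1),\ldots$, since the second branch of \eqref{I} involves values of $B[S,I]$ reaching back into previously constructed pieces; this is handled by observing that on each new piece $B[S,I](t-a)$ is already determined by the preceding piece whenever $t-a$ lies there, so the formula is well-defined and coincides with the restriction of the previously constructed solution.
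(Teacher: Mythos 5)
Your proposal is correct and is essentially the argument the paper has in mind: the paper omits the proof as ``standard,'' pointing precisely to the fact that the existence time $T(R)$ from Proposition~\ref{P1} depends only on a bound $R$ for the norm of the initial data, which is exactly what your continuation and blow-up arguments exploit. The one point you rightly single out as needing care --- consistency of the characteristic representation \eqref{I} across junction times via the composition property of the evolution operator $U_A$ --- is handled correctly and mirrors the shift computation the paper itself carries out in the proof of Proposition~\ref{P2}.
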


\begin{rem}\label{R1}
It is worth noting that Corollary~\ref{C1a} remains valid for models that can be recast in the form~\eqref{EE} such that $f$ and $B$ satisfy~\eqref{31} with $\frac{n}{2}(\frac{1}{q}-\frac{1}{p})<1$ for some $1<q\le p<\infty$.
\end{rem}

\subsection{Regularity}\label{SReg}

We derive further regularity properties of the solution $(S,I)$ (it is for this step that we have imposed restrictive regularity assumptions on the data $b$ and $r$).

\begin{prop}\label{P2}
Let $2\vartheta\in [0,2]\setminus\{\delta+\frac{1}{p}\}$.
If $S_0\in W_{p,\mathcal{B}}^{2\vartheta}(\Omega)$, then
$$
S\in C^1\big((0, T_m),L_p(\Omega)\big)\cap C\big((0, T_m),W_{p,\mathcal{B}}^{2}(\Omega)\big)\cap C\big([0, T_m),W_{p,\mathcal{B}}^{2\vartheta}(\Omega)\big)
$$ 
is a strong solution to~\eqref{E3} while, if $I_0\in L_1(J,W_{p,\mathcal{B}}^{2\vartheta}(\Omega))$, then
\[
I\in C\big((0,T_m),L_1(J,W_{p,\mathcal{B}}^{2}(\Omega))\big)\cap C\big([0, T_m),L_1(J,W_{p,\mathcal{B}}^{2\vartheta}(\Omega)))\big)
\]
satisfies~\eqref{E1} in the sense that
$$
D I(t,a)=A(a)I(t,a)\ \text{ in }\ L_p(\Omega)
$$
for $t\in (0,T_m)$ and a.e. $a\in (0,a_m)$. 
\end{prop}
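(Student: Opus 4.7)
The plan is to bootstrap the spatial regularity of $(S,I)$ by iterating the smoothing estimates \eqref{EST1}--\eqref{EST} in the representation formulas \eqref{S}--\eqref{I}, then upgrade to time-H\"older continuity of the nonlinearities $f[S,I]$ and $B[S,I]$ via Lemma~\ref{L1w}, and finally invoke classical parabolic theory. The first step starts from $(S,I)\in C([0,T_m),L_p(\Omega)\times L_1(J,L_p(\Omega)))$, so Lemma~\ref{L1w} gives $f[S,I],B[S,I]\in C([0,T_m),L_{p/2}(\Omega))$; inserting this into \eqref{S}--\eqref{I} and using \eqref{EST1}--\eqref{EST} (whose singular factors remain integrable thanks to $2p>n$, exactly as in the proof of Proposition~\ref{P1}) gains spatial Sobolev regularity, i.e.\ $(S(t),I(t,\cdot))\in W_{p,\mathcal{B}}^{2\theta_1}(\Omega)\times L_1(J,W_{p,\mathcal{B}}^{2\theta_1}(\Omega))$ for $t>0$ and some $\theta_1>0$. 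Once $2\theta>n/p$, the second half of Lemma~\ref{L1w} yields $f[S,I],B[S,I]\in C((0,T_m),W_p^{2\alpha}(\Omega))$ for some $\alpha>0$; re-inserting into the representation formulas and using~\eqref{EST1}--\eqref{EST} adds $2\alpha$ further derivatives, and iterating finitely often lifts $(S(t),I(t,\cdot))$ into $W_{p,\mathcal{B}}^{2\theta}(\Omega)\times L_1(J,W_{p,\mathcal{B}}^{2\theta}(\Omega))$ for every admissible $2\theta<2$.

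Next, combining this spatial regularity with the time-continuity of $(S,I)$ and the parabolic gain in \eqref{EST1} yields, by a standard argument, that $f[S,I]$ is locally H\"older continuous in $t$ with values in $L_p(\Omega)$. Classical parabolic theory for nonhomogeneous equations (e.g.\ \cite[II.Theorem~1.2.2]{LQPP}) applied to \eqref{S} then produces the strong solution $S\in C^1((0,T_m),L_p(\Omega))\cap C((0,T_m),W_{p,\mathcal{B}}^{2}(\Omega))$ of \eqref{E3}. Under the hypothesis $S_0\in W_{p,\mathcal{B}}^{2\vartheta}(\Omega)$, the continuity $S\in C([0,T_m),W_{p,\mathcal{B}}^{2\vartheta}(\Omega))$ follows from the strong continuity of $(e^{t\Delta_\mathcal{B}})_{t\ge 0}$ on $W_{p,\mathcal{B}}^{2\vartheta}(\Omega)$ together with an estimate of the Duhamel integral in the $W_{p,\mathcal{B}}^{2\vartheta}(\Omega)$-norm via \eqref{EST1}, using that $f[S,I]$ takes values in $L_p(\Omega)$ (or a stronger space after bootstrap).

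For $I$, the identity $DI(t,a)=A(a)I(t,a)$ in $L_p(\Omega)$ for $t>0$ and a.e.\ $a\in(0,a_m)$ comes directly from the defining relation $\partial_a U_A(a,\sigma)v=A(a)U_A(a,\sigma)v$, valid on $L_p(\Omega)$ for $a>\sigma$: in each branch of \eqref{I} the parameter $t-a$ (respectively the argument $a-t$ of $I_0$) is constant along characteristics, so that the directional derivative $D$ reduces to $\partial_a U_A(a,\cdot)$. The membership $I\in C((0,T_m),L_1(J,W_{p,\mathcal{B}}^{2}(\Omega)))$ follows by applying \eqref{EST} in both branches of \eqref{I}, the bootstrap ensuring $B[S,I]\in C((0,T_m),W_p^{2\alpha}(\Omega))$ and thereby taming the singularity at $a=0$ in the $a<t$ branch. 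Finally, $I\in C([0,T_m),L_1(J,W_{p,\mathcal{B}}^{2\vartheta}(\Omega)))$ under $I_0\in L_1(J,W_{p,\mathcal{B}}^{2\vartheta}(\Omega))$ follows from dominated convergence applied to \eqref{I}, using strong continuity of $U_A$ on $W_{p,\mathcal{B}}^{2\vartheta}(\Omega)$ and of the translation group on $L_1(J,W_{p,\mathcal{B}}^{2\vartheta}(\Omega))$. The main obstacle I anticipate is the careful $L_1$-in-$a$ bookkeeping of the bootstrap: the two branches of \eqref{I} carry different singular weights, and one must verify at each iteration that the gain in spatial regularity preserves the integrability in $a$ and that the corresponding time-H\"older exponents remain strictly positive down to the endpoint behaviour near $a=0$ and along the interface $a=t$.
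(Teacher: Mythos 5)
Your overall strategy coincides with the paper's: bootstrap spatial regularity through the representation formulas \eqref{S}--\eqref{I} using \eqref{EST1}--\eqref{EST} and Lemma~\ref{L1w} (a single pass already suffices, since $p>3n/4$ guarantees a $2\theta$ with $n/2p<2\theta<2-n/p$, so the second half of Lemma~\ref{L1w} applies without further iteration), then conclude via \cite[II.Theorem~1.2.2]{LQPP} for $S$ and via the differentiability of the evolution operator along characteristics for $DI=A(a)I$.

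There is, however, one genuine gap: your route to the strong solution for $S$ goes through the claim that $f[S,I]$ is locally H\"older continuous in $t$ with values in $L_p(\Omega)$, obtained ``by a standard argument''. This is not available here. The terms $B[S,I]$ and $R[I]$ contain $\int_0^{a_m}b(a)I(t,a)\,\rd a$, and in the branch $a\ge t$ of \eqref{I} one has $I(t,a)=U_A(a,a-t)I_0(a-t)$; after a change of variables this contributes $\int_0^{a_m-t}b(\sigma+t)\,U_A(\sigma+t,\sigma)I_0(\sigma)\,\rd\sigma$, whose modulus of continuity in $t$ is governed by the translation of $I_0$ in $L_1(J,L_p(\Omega))$ and by $\|U_A(\sigma+t_1,\sigma)v-U_A(\sigma+t_2,\sigma)v\|_{L_p(\Omega)}$ for $v=I_0(\sigma)$ merely in $L_p(\Omega)$. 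Both are continuous with no quantitative rate under the standing hypothesis $I_0\in L_1(J,L_p(\Omega))$, and this branch persists for all $t<a_m$, so restricting to compact subsets of $(0,T_m)$ does not help. The repair is cheap and is exactly what the paper does: \cite[II.Theorem~1.2.2]{LQPP} also applies when the inhomogeneity is merely continuous in time with values in an intermediate space, and your own bootstrap already yields $f[S,I]\in C\big((0,T_m),W_{p,\mathcal{B}}^{2\alpha}(\Omega)\big)$ for some $\alpha>0$; invoke that criterion instead (the paper does so after shifting time by $\ve>0$ and letting $\ve\to0$). Relatedly, for $I\in C\big((0,T_m),L_1(J,W_{p,\mathcal{B}}^{2}(\Omega))\big)$ you should make the time shift explicit: rewrite \eqref{I} with initial time $\ve$ via the cocycle property of $U_A$, so that the branch $a\ge t$ carries $I_{0,\ve}=I(\ve,\cdot)\in L_1(J,W_{p,\mathcal{B}}^{2\theta}(\Omega))$ rather than the raw $I_0$; without this, the difference $\big(U_A(a,a-t_1)-U_A(a,a-t_2)\big)I_0(a-t_1)$ cannot be controlled in $W_{p,\mathcal{B}}^{2}(\Omega)$. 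This is precisely the ``$L_1$-in-$a$ bookkeeping'' you anticipate, carried out in the paper's Lemma~\ref{APL1}.
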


\begin{proof}
Since 
$$
\¦ e^{t\Delta_\mathcal{B}}\¦_{\ml (L_{p/2},W_{p,\mathcal{B}}^{2\theta}(\Omega))}\leq
c(T)t^{-n/2p-\theta}\ ,\quad 0< t\leq T\ ,
$$
and $f[S,I]\in C\big([0,T_m),L_{p/2}(\Omega)\big)$, it readily follows from~\eqref{S} that $S\in C\big((0,T_m),W_{p,\mathcal{B}}^{2\theta}(\Omega)\big)$ for $2\theta<2-n/p$ with $2\theta\notin\{\delta+1/p\}$. Similarly, as in the proof of Proposition~\ref{P1} (see also the proof of Lemma~\ref{APL1} in the Appendix) one derives from
\begin{align*}
 \|U_A(a,\sigma)\|_{\ml(L_{p/2}(\Omega),W_{p,\mathcal{B}}^{2\theta}(\Omega))}\le M (a-\sigma)^{-n/2p-\theta}\,,\quad 0\le\sigma\le a\in J\,,
\end{align*}
and $B[S,I]\in C\big([0,T_m),L_1(J,L_{p/2}(\Omega))\big)$ that $I\in C\big((0,T_m),L_1(J,W_{p,\mathcal{B}}^{2\theta}(\Omega))\big)$ for $2\theta<2-n/p$ with $2\theta\notin\{\delta+1/p\}$. 
Now, since $n<4p/3$, we find
$2\theta\in(n/2p,2-n/p)\setminus \{\delta+1/p\}$ so that, according to Lemma~\ref{L1w}, there is $\alpha>0$ such that
$$
f[S,I](\ve+\cdot)\in C\big([0, T_m-\ve),W_{p,\mathcal{B}}^{2\alpha}(\Omega)\big)
$$ for each $\ve>0$ small. Thus, we infer from \cite[II.Theorem~1.2.2]{LQPP} that
$$
S_\ve:=S(\ve+\cdot)\in C^1\big((0, T_m-\ve),L_p(\Omega)\big)\cap C\big((0, T_m-\ve),W_{p,\mathcal{B}}^{2}(\Omega)\big)
$$ 
is a strong solution to 
$$
\partial_t S_\ve=\Delta_\mathcal{B} S_\ve+f[S,I](\ve+\cdot)\,,\quad t\in (0, T_m-\ve)\,,\qquad S_\ve(0)=S(\ve)\in W_{p,\mathcal{B}}^{2\theta}(\Omega)\,.
$$
Letting then $\ve$ tend to zero we obtain that 
$$
S\in C^1((0, T_m),L_p)\cap C((0, T_m),W_{p,\mathcal{B}}^{2}(\Omega))
$$ 
is a strong solution to~\eqref{E3}. Moreover, if $S_0\in W_{p,\mathcal{B}}^{2\vartheta}(\Omega)$ for some $2\vartheta\in [0,2]\setminus\{\delta+1/p\}$, then
$$
S\in C\big([0, T_m),W_{p,\mathcal{B}}^{2\vartheta}(\Omega)\big)\,.
$$ 
Similarly, setting 
$$
I_\ve:=I(\ve+\cdot,\cdot)\,,\qquad I_{0,\ve}:=I_\ve(0,\cdot)=I(\ve,\cdot)\,,
$$
we deduce from \eqref{I} and the properties of evolution operators that, for $t\in [0,T_m-\ve)$ and $a\in J$, 
\begin{align*}
I_\ve(t,a)&=\left\{\begin{array}{ll}
U_A(a,a-t-\ve) I_0(a-t-\ve)\,,& a>t+\ve\,, \\[2pt]
U_A(a,0)B[S,I](\ve+t-a) \,,&a\le t+\ve\,, 
\end{array} \right.\\
&=\left\{\begin{array}{ll}
U_A(a,a-t)U_A(a-t,a-t-\ve) I_0(a-t-\ve)\,,& a>t+\ve\,,\\[2pt]
U_A(a,a-t)U_A(a-t,0)B[S,I](\ve+t-a) \,,&t<a\le  t+\ve\,, \\[2pt]
U_A(a,0)B[S,I](\ve+t-a) \,,&a\le t\,,
\end{array} \right.\\
&=\left\{\begin{array}{ll}
U_A(a,a-t) I_{0,\ve}(a-t)\,,& a>t\,, \\[2pt]
U_A(a,0)B[S_\ve,I_\ve](t-a) \,,&a\le t\,.
\end{array} \right.
\end{align*}
Now, since
$$
S_\ve\in  C\big([0, T_m-\ve),W_{p,\mathcal{B}}^{2}(\Omega)\big)\,,\quad I_\ve\in  C\big([0, T_m-\ve),L_1(J,W_{p,\mathcal{B}}^{2\theta}(\Omega))\big)\,,\quad  I_{0,\ve}\in L_1(J,W_{p,\mathcal{B}}^{2\theta}(\Omega))
$$
for $2\theta<2-n/p$, it follows from \eqref{EST} (see Lemma~\ref{APL1} in the Appendix) that
\[
I\in C\big((0,T_m),L_1(J,W_{p,\mathcal{B}}^{2}(\Omega))\big)\,.
\]
In addition, if $I_0\in L_1(J,W_{p,\mathcal{B}}^{2\vartheta}(\Omega))$ for some $2\vartheta\in [0,2]\setminus\{\delta+1/p\}$, then
\[
I\in C\big([0, T_m),L_1(J,W_{p,\mathcal{B}}^{2\vartheta}(\Omega)))\big)\,.
\]
Moreover, \eqref{I} and the differentiability properties of the evolution operator~$U_A$ stated in \cite[II.Equation~(2.1.6)]{LQPP} imply
$$
D I(t,a)=\lim_{h\to 0^+} \frac{1}{h}\big(I(t+h,a+h)-I(t,a)\big)=A(a)I(t,a)\ \text{ in }\ L_p(\Omega)
$$
for $t\in (0,T_m)$ and a.e. $a\in (0,a_m)$ (in fact, for every $a\in (0,a_m)$ if $I_0\in C\big((0,a_m), L_p(\Omega)\big)$ is continuous).
\end{proof}

Note that taking $2\vartheta=0$ in Proposition~\ref{P2} we obtain the regularity of the solution $(S,I)$ claimed in Theorem~\ref{T1}. 

\begin{rem}
Assuming additionally $b\in BC^1(J,C(\bar\Omega))$ and $I_0\in C^1(J,L_p(\Omega))$,  one can show analogously to \cite[Proposition~1]{WalkerDCDSA10} that the partial derivatives
$\partial_t I(t,a)$ and $\partial_a I(t,a)$
exist  and
\[
D I(t,a)= \partial_t I(t,a)+ \partial_a I(t,a)=A(a) I(t,a)
\]
in $L_p(\Omega)$ for $t\in (0,T_m)$ and $a\in (0,a_m)$.
\end{rem}

\subsection{Positivity}

Since the semigroup $\big(e^{t\Delta_\mathcal{B}}\big)_{t\ge 0}$ and the evolution operator $\big(U_A(a,\sigma)\big)_{0\le\sigma\le a\le a_m}$ are positive operators on $L_p(\Omega)$ (as well as on the spaces $W_{p,\mathcal{B}}^{2\theta}(\Omega)$) and since there is $\omega(R)>0$ such that
\[
B[S,I]\ge 0\,,\quad f[S,I]+\omega(R)S\ge 0
\]
provided that $S,I\ge 0$ with $\|(S,I)\|_{L_\infty(\Omega)\times L_1(J,L_\infty(\Omega))}\le R$ (see Subsection~\ref{SReg} for such local bounds), it is a standard iteration argument to derive that the solution $(S,I)$ from Corollary~\ref{C1a}  corresponding to non-negative initial values $S_0\in L_p^+(\Omega)$ and $I_0\in L_1(J,L_p^+(\Omega))$ satisfies \mbox{$S(t)\in L_p^+(\Omega)$} and \mbox{$I(t)\in L_1(J,L_p^+(\Omega))$} for $t\in [0,T_m)$.

\subsection{Global Existence}

Integrating~\eqref{E} yields for $t\in (0,T_m)$ the inequality (in fact,  equality for Neumann boundary conditions, see Section~\ref{APSec2} in the Appendix for a rigorous proof)
\begin{align}
\int_\Omega S(t,x)\,&\rd x+\int_0^{a_m}\int_\Omega I(t,a,x)\,\rd x\,\rd a\nonumber\\
\le &
\int_\Omega S_0(x)\,\rd x+\int_0^{a_m}\int_\Omega I_0(a,x)\,\rd x\,\rd a+\int_0^t\int_\Omega \kappa_1\left(1-\frac{S(\tau,x)}{\kappa_2}\right)S(\tau,x)\,\rd x\,\rd \tau\nonumber\\
& -\int_0^t\int_0^{a_m}\int_\Omega m(a,x)\,I(\tau,a,x)\,\rd x\,\rd a\,\rd\tau
-\int_{a_m-t}^{a_m}\int_\Omega I_0(a,x)\,\rd x\,\rd a\nonumber\\
&
+\int_0^t\int_{a_m-t+\tau}^{a_m}\int_\Omega \big(m(a,x)+r(a,x)\big)\,U_A(a,a-\tau)\,I_0(a-\tau,x)\,\rd x\,\rd a\,\rd\tau\,.\label{L1id}
\end{align}
Since
\begin{equation}\label{esS}
\kappa_1\left(1-\frac{S}{\kappa_2}\right)S\le \frac{\kappa_1\kappa_2}{4}\,,
\end{equation}
we thus deduce from the positivity of $(S,I)$ the $L_1$-estimate
\begin{equation}\begin{split} \label{L1est}
\|S(t)\|_{L_1(\Omega)}+\|I(t)\|_{L_1(J,L_1(\Omega))} \le \, & \|S_0\|_{L_1(\Omega)}+\|I_0\|_{L_1(J,L_1(\Omega))} \\
&+t\vert\Omega\vert \frac{\kappa_1\kappa_2}{4}+\|m+r||_{L_\infty(J,L_\infty(\Omega))} t\|I_0\|_{L_1(J,L_1(\Omega))}
\end{split}
\end{equation}
for $t\in (0,T_m)$. We shall then proceed with the following auxiliary result:

\begin{lem}\label{L2w}
{\bf (i)} Let $1\le q\le r\le\infty$ with $\frac{n}{2}(\frac{1}{q}-\frac{1}{r})<1$. If 
$$
\|I(t)\|_{L_1(J,L_q(\Omega))}\le c_0(T)\,,\quad t\in [0,T]\,,
$$
then
$$
\|S(t)\|_{L_r(\Omega)}\le \|S_0\|_{L_r(\Omega)}+c(T)\,,\quad t\in [0,T]\,.
$$

{\bf (ii)} Let $1\le r\le\infty$ with $\frac{n}{2r}<1$. If 
$$
\|S(t)\|_{L_r(\Omega)}\le c_0(T)\,,\quad t\in [0,T]\,,
$$
then
$$
\|I(t)\|_{L_1(J,L_p(\Omega))}\le  c(T)\big(1+\|I_0\|_{L_1(J,L_p(\Omega))}\big) \,,\quad t\in [0,T]\,.
$$
\end{lem}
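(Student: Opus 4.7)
Both parts are handled via the variation-of-constants representations~\eqref{S} and~\eqref{I} combined with the semigroup/evolution operator estimates~\eqref{EST2}--\eqref{EST} and the positivity of $(S,I)$ (established earlier in this section).

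\textbf{Part (i).} The plan is to exploit the sign of the nonlinear sink. Since $S,I\ge 0$ and $b\ge 0$, the term $-B[S,I]$ is nonpositive, while $\kappa_1(1-S/\kappa_2)S\le\kappa_1\kappa_2/4$ by~\eqref{esS} and $\|R[I](\tau)\|_{L_q(\Omega)}\le \|r\|_{L_\infty(J\times\Omega)}\|I(\tau)\|_{L_1(J,L_q(\Omega))}\le c\,c_0(T)$. Using positivity of the semigroup $(e^{t\Delta_\mathcal{B}})_{t\ge 0}$ on $L_r(\Omega)$ and dropping $-B[S,I]\le 0$ from the right-hand side of~\eqref{S}, I estimate
\begin{equation*}
0\le S(t)\le e^{t\Delta_\mathcal{B}}S_0+\int_0^t e^{(t-\tau)\Delta_\mathcal{B}}\Bigl(\tfrac{\kappa_1\kappa_2}{4}+R[I](\tau)\Bigr)\rd\tau\,.
\end{equation*}
Taking $L_r(\Omega)$-norms and invoking~\eqref{EST1} with exponent $\frac{n}{2}(\frac{1}{q}-\frac{1}{r})<1$ produces
\begin{equation*}
\|S(t)\|_{L_r(\Omega)}\le \|S_0\|_{L_r(\Omega)}+c(T)\int_0^t(t-\tau)^{-\frac{n}{2}(\frac{1}{q}-\frac{1}{r})}\,\rd\tau\le \|S_0\|_{L_r(\Omega)}+c(T)\,,
\end{equation*}
which is the assertion.

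\textbf{Part (ii).} Set $\phi(t):=\|I(t)\|_{L_1(J,L_p(\Omega))}$ and split~\eqref{I} into the contributions from $I_0$ and from the boundary trace $B[S,I]$. By~\eqref{EST2} the $I_0$-piece is controlled by $e^{|\varpi|T}\|I_0\|_{L_1(J,L_p(\Omega))}$. For the $B[S,I]$-piece, fix $q\in[1,p]$ by $\tfrac{1}{q}=\tfrac{1}{r}+\tfrac{1}{p}$; then H\"older's inequality and $b\in L_\infty(J\times\Omega)$ give
\begin{equation*}
\|B[S,I](\tau)\|_{L_q(\Omega)}\le \|b\|_\infty\,\|S(\tau)\|_{L_r(\Omega)}\,\|I(\tau)\|_{L_1(J,L_p(\Omega))}\le c\,c_0(T)\,\phi(\tau)\,.
\end{equation*}
Using~\eqref{EST} with exponent $\frac{n}{2}(\frac{1}{q}-\frac{1}{p})=\frac{n}{2r}<1$ yields
\begin{equation*}
\phi(t)\le c(T)\|I_0\|_{L_1(J,L_p(\Omega))}+c(T)\int_0^{t\wedge a_m} a^{-\frac{n}{2r}}\,\phi(t-a)\,\rd a\,.
\end{equation*}
The desired bound then follows from the singular Gronwall (Henry) inequality applied to the weakly singular Volterra inequality above.

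\textbf{Main obstacle.} The essential point is ensuring that the integral kernels that appear in each part are genuinely integrable, which is exactly what the hypotheses $\frac{n}{2}(\frac{1}{q}-\frac{1}{r})<1$ in (i) and $\frac{n}{2r}<1$ in (ii) guarantee; the rest is a routine combination of semigroup smoothing, positivity, H\"older's inequality, and a weakly singular Gronwall argument.
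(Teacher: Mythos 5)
Your argument coincides with the paper's own proof in both parts: for (i) you drop the nonpositive term $-B[S,I]$ by positivity, bound the logistic term by $\kappa_1\kappa_2/4$ and the recovery term via the $L_q\to L_r$ smoothing estimate with integrable kernel; for (ii) you choose $\tfrac1q=\tfrac1r+\tfrac1p$, apply H\"older to $B[S,I]$, and close with the singular Gronwall inequality, exactly as in the paper. The proposal is correct and takes essentially the same route.
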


\begin{proof}
{\bf (i)} By \eqref{S} we have
\begin{align*}
0\le S(t)\le\, & e^{t\Delta_\mathcal{B}}S_0+\displaystyle\int_0^t e^{(t-\tau)\Delta_\mathcal{B}}\kappa_1\left(1-\frac{S(\tau)}{\kappa_2}\right)S(\tau)\,\rd\tau\\
&
+\displaystyle\int_0^t e^{(t-\tau)\Delta_\mathcal{B}}\int_0^{a_m} r(a,\cdot)I(\tau,a)\,\rd a\,\rd\tau 
\end{align*}
and therefore, using \eqref{esS} and
$$
\|e^{(t-\tau)\Delta_\mathcal{B}}\|_{\ml(L_q(\Omega),L_r(\Omega))}\le c(T) (t-\tau)^{-\frac{n}{2}(\frac{1}{q}-\frac{1}{r})}\,,\quad 0\le \tau<t\le T\,,
$$
we deduce from $\|I(t)\|_{L_1(J,L_q(\Omega))}\le c_0(T)$ for  $t\in [0,T]$ that
\begin{align*}
\|S(t)\|_{L_r(\Omega)}&\le \|S_0\|_{L_r(\Omega)}+c \int_0^t\|e^{(t-\tau)\Delta_\mathcal{B}}\|_{\ml(L_\infty(\Omega))} \,\left\|\kappa_1\left(1-\frac{S(\tau)}{\kappa_2}\right)S(\tau) \right\|_{L_\infty(\Omega)} \,\rd\tau\\
&\qquad + \int_0^t \|e^{(t-\tau)\Delta_\mathcal{B}}\|_{\ml(L_q(\Omega),L_r(\Omega))}\,\|r\|_{L_\infty(J,L_\infty(\Omega))}\,\|I(\tau)\|_{L_1(J,L_q(\Omega))}\, \rd\tau \\
&\le \|S_0\|_{L_r(\Omega)}+c(T)
\end{align*}
for $t\in [0,T]$. This proves (i).\\

{\bf (ii)} Set $\frac{1}{q}:=\frac{1}{r}+\frac{1}{p}$ so that
\[
\| B[S,I]\|_{L_q(\Omega)}\le \|b\|_{L_\infty(J,L_\infty(\Omega))}\,\|S\|_{L_r(\Omega)}\, \|I\|_{L_1(J,L_p(\Omega))}\,. 
\]
Then we infer for $t\in [0,T]$ from \eqref{I}, \eqref{EST2}, and \eqref{EST}
that
\begin{align*}
\|I(t)\|_{L_1(J,L_p(\Omega))}&\le  \int_0^t \|U_A(a,0)\|_{\ml(L_q(\Omega),L_p(\Omega))}\, \| B[S,I](t-a)\|_{L_q(\Omega)}\,\rd a\\
&\qquad +\int_t^{a_m}\|U_A(a,a-t)\|_{\ml(L_p(\Omega))}\,\| I_0(a-t)\|_{L_p(\Omega)}\, \rd a\\
&\le c(T) \int_0^t (t-\sigma)^{-\frac{n}{2r}}\, \|I(\sigma)\|_{L_1(J,L_p(\Omega))}\,\rd  \sigma+\|I_0\|_{L_1(J,L_p(\Omega))} 
\end{align*}
with $\frac{n}{2r}<1$ whenever $\|S(t)\|_{L_r(\Omega)}\le c_0(T)$ for $ t\in [0,T]$. Hence, Gronwall's inequality implies
\begin{align*}
\|I(t)\|_{L_1(J,L_p(\Omega))}\le c(T)\big(1+\|I_0\|_{L_1(J,L_p(\Omega))}\big) \,,\quad t\in [0,T]\,,
\end{align*}
as claimed.
\end{proof}

Now, since
$$
\|I(t)\|_{L_1(J,L_1(\Omega))}\le c(T)\,,\quad t\in [0,T] \cap  [0,T_m)\,,
$$
by \eqref{L1est}, we deduce from Lemma~\ref{L2w}~(i) that
$$
\|S(t)\|_{L_r(\Omega))}\le c(T)\,,\quad t\in [0,T] \cap  [0,T_m)\,,
$$
for $n/2<r<n/(n-2)$ and hence
$$
\|I(t)\|_{L_1(J,L_p(\Omega))}\le c(T)\,,\quad t\in [0,T] \cap  [0,T_m)\,,
$$
due to  Lemma~\ref{L2w}~(ii). Taking $r=q=p$ in Lemma~\ref{L2w}~(i) yields now
$$
\|S(t)\|_{L_p(\Omega))}\le c(T)\,,\quad t\in [0,T] \cap  [0,T_m)\,.
$$
Consequently, $T_m=\infty$ according to~\eqref{T}. This completes the proof of Theorem~\ref{T1}.

\section{Linearized Stability of Steady States}\label{Sec4}

We  linearize~\eqref{E} around a steady state and then derive properties of the associated linear semigroup. This allows us to introduce the notion of linear stability.

Throughout this chapter we assume \eqref{A} and fix an arbitrary steady state $(S_*,I_*)$ to~\eqref{E} with regularity
\begin{equation}\label{ss}
\begin{split}
&S_*\in W_{p,\mathcal{B}}^{2}(\Omega)\,, \quad S_*>0 \ \text{ in }\ \Omega\,,\\
&I_*\in L_1(J,W_{p,\mathcal{B}}^{2}(\Omega))\cap W_1^1(J,L_p(\Omega))\,,\quad I_*\ge 0  \ \text{ in }\ J\times\Omega\,,
\end{split}
\end{equation}
for some $p>( 2\vee n)$. 


\subsection{Linearization Around Steady States}


Linearizing~\eqref{E} around the steady state~$(S_*,I_*)$ yields the problem
\begin{subequations}\label{LIN}
\begin{align}
\partial_t S(t,x)&=\Delta S(t,x)+\kappa_1S(t,x)- \dfrac{2\kappa_1 S_*(x)}{\kappa_2} S(t,x)-S(t,x)\int_0^{a_m} b(a,x)\,I_*(a,x)\,\rd a \nonumber\\
&\quad  -S_*(x)\int_0^{a_m} b(a,x)\,I(t,a,x)\,\rd a +\int_0^{a_m} r(a,x)\,I(t,a,x)\,\rd a \,, \label{E3al}\\
D I(t,a,x)&=d(a)\Delta I(t,a,x)-\big(m(a,x)+r(a,x)\big) I(t,a,x)\,, \label{E1al}
\\
I(t,0,x)&=S_*(x)\int_0^{a_m} b(a,x)\,I(t,a,x)\,\rd a +S(t,x)\int_0^{a_m} b(a,x)\,I_*(a,x)\,\rd a\,, \label{E2al}
\end{align}
for $(t,a,x)\in \R^+\times [0,a_m]\times \Omega$, and subject to the initial conditions
\begin{align}
S(0,x)=S_0(x)\,,\quad I(0,a,x)&= I_0(a,x)\,,\qquad (a,x)\in (0,a_m)\times\Omega\,, \label{E4al}
\end{align}
and boundary conditions
\begin{align}
\mathcal{B}S(t,x)= 0\,,\quad \mathcal{B}I(t,a,x)&= 0\,,\qquad (t,a,x)\in\R^+\times (0,a_m)\times\partial\Omega\,. \label{E5al}
\end{align}
\end{subequations}
Introducing 
\begin{subequations}\label{STAR}
\begin{align}
q_*:&=\int_0^{a_m} b(a,\cdot)\,I_*(a,\cdot)\,\rd a\in C^1(\bar\Omega)\,,\\
 P_*I&:=S_*\int_0^{a_m} b(a,\cdot)\,I(a,\cdot)\,\rd a\,,\qquad NI:=\int_0^{a_m} r(a,\cdot)\,I(a,\cdot)\,\rd a\,, \label{qnot}
\end{align}
and setting
\begin{align}
A_1^*&:=\Delta_\mathcal{B} +\kappa_1- \dfrac{2\kappa_1 S_*}{\kappa_2}-q_*\,,\label{A1not}\\
A(a)&:=d(a)\Delta_\mathcal{B} -m(a,\cdot)-r(a,\cdot)\,,\quad a\in J\,,\label{A1nott}
\end{align}
it follows
\begin{align}\label{Pstern}
P_*\,,\, N\in\ml\big(L_1(J,L_p(\Omega)),L_p(\Omega)\big)
\end{align} 
\end{subequations}
and $A_1^*$ with domain $W_{p,\mathcal{B}}^2(\Omega)$ generates a positive, compact, analytic semigroup $(e^{tA_1^*})_{t\ge 0}$ on $L_p(\Omega)$
while
the operator family $A(a)$ with domain $W_{p,\mathcal{B}}^2(\Omega)$ generates a positive parabolic evolution operator
$(U_{A}(a,\sigma))_{0\le \sigma\le a\le a_m}$
on $L_p(\Omega)$. With this notation we can recast the linearization~\eqref{LIN} as an equation in $L_p(\Omega)\times L_1(J,L_p(\Omega))$ of  the form
\begin{subequations}\label{C}
\begin{align}\label{C1}
\partial_t\left(\begin{matrix} S \\ I \end{matrix}\right)=\left(\begin{matrix} A_1^* & -P_* +N \\ 0& -\partial_a+A(a) \end{matrix}\right)\left(\begin{matrix} S \\ I \end{matrix}\right)\,, \quad t>0\,,\qquad \left(\begin{matrix} S \\ I \end{matrix}\right)(0)=\left(\begin{matrix} S_0 \\ I_0 \end{matrix}\right)\,,
\end{align}
subject to
\begin{align}\label{C2}
I(t,0)-P_*I(t,\cdot)=q_*S(t)\,,\quad t>0\,.
\end{align}
\end{subequations}
Following~\cite{WalkerIUMJ} we next show that the solutions to~\eqref{C}  are given by a strongly continuous semigroup on the phase space $L_p(\Omega)\times L_1(J,L_p(\Omega))$.

\subsection{The Semigroup Associated with the Linearization~\eqref{C}}

In order to investigate the properties of the semigroup generated by the linearization~\eqref{C} we write
\begin{align*} 
\left(\begin{matrix} A_1^* & -P_* +N \\ 0& -\partial_a+A(a) \end{matrix}\right)=\left(\begin{matrix} A_1^* & 0 \\ 0 & -\partial_a+A(a) \end{matrix}\right)+\left(\begin{matrix}0 & -P_* +N \\ 0& 0 \end{matrix}\right)
\end{align*}
and use a perturbation argument, first focusing on the diagonal part. 
In the following, we will require information on the operator family $S_*Q^\lambda$ with
\begin{equation}\label{Q}
Q^\lambda:=\int_0^{a_m} b(a)\, U_{A}^\lambda(a,0)\,\rd a\,,\quad \lambda\in \C\,,
\end{equation}
where
$$
U_{A}^\lambda(a,\sigma):=e^{-\lambda (a-\sigma)}\,U_{A}(a,\sigma)\,,\quad 0\le \sigma\le a\le a_m\,.
$$
It follows from \eqref{EST} and \eqref{A} that
\begin{equation}\label{QQ}
S_*Q^\lambda\in\mathcal{L}\big(L_p(\Omega),W_{p,\mathcal{B}}^{1}(\Omega)\big) \cap \mathcal{K}\big(L_p(\Omega)\big)\,,
\end{equation} 
where the compact embedding of $W_{p,\mathcal{B}}^{1}(\Omega)$ into $L_p(\Omega)$ ensures the compactness of the operator  $S_*Q^\lambda$ on $L_p(\Omega)$. Moreover, for $\lambda\in \R$, the operator $S_*Q^\lambda\in\mathcal{L}(L_p(\Omega))$ is an irreducible operator for $p>n$ \cite[Corollary 13.6]{DanersKochMedina}. Its spectral radius is thus characterized by the Krein-Rutman Theorem. We cite the following result in this context:

\begin{lem}[{\bf \cite[Lemma 2.4, Lemma 2.5]{WalkerMOFM}}]\label{L0} 
For $\lambda\in \R$,  the spectral radius $r(S_*Q^\lambda)$ is positive and a simple eigenvalue of \mbox{$S_*Q^\lambda\in\mathcal{K}(L_p(\Omega))$} with an eigenvector $\zeta_\lambda \in W_{p,\mathcal{B}}^1(\Omega)$ that is quasi-interior in~$L_p^+(\Omega)$. It is the only eigenvalue of $S_*Q^\lambda$ with a positive eigenvector.
The mapping  
$$
\R\rightarrow (0,\infty)\,,\quad \lambda\mapsto r(S_*Q^\lambda)
$$ 
is continuous and strictly decreasing with 
$$
\lim_{\lambda\rightarrow-\infty}r(S_*Q^\lambda) =\infty\,,\qquad  \lim_{\lambda\rightarrow\infty}r(S_*Q^\lambda) =0\,.
$$
\end{lem}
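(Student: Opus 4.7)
The plan is to apply the Krein--Rutman theorem in its strong form for compact, irreducible, positive operators. The compactness $S_*Q^\lambda\in\mk(L_p(\Omega))$ is already provided by \eqref{QQ}, and the irreducibility for $p>n$ is a direct consequence of $b\not\equiv 0$, $S_*>0$, and the positivity of the evolution operator $U_A$ (the cited result from \cite{DanersKochMedina} applies). Because $b\ge 0$ is nontrivial on a set of positive measure, $S_*Q^\lambda$ is a nonzero positive operator, so the standard Krein--Rutman conclusion yields that $r(S_*Q^\lambda)>0$ is a simple eigenvalue admitting a quasi-interior eigenvector $\zeta_\lambda\in L_p^+(\Omega)$, and that $r(S_*Q^\lambda)$ is the only eigenvalue possessing a positive eigenvector. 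The improved regularity $\zeta_\lambda\in W_{p,\mathcal{B}}^1(\Omega)$ is then immediate from the identity $\zeta_\lambda=r(S_*Q^\lambda)^{-1}\,S_*Q^\lambda\zeta_\lambda$ together with \eqref{QQ}.

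For the continuity of $\lambda\mapsto r(S_*Q^\lambda)$, I would note that $\lambda\mapsto S_*Q^\lambda$ is continuous (in fact entire analytic) from $\R$ into $\ml(L_p(\Omega))$ by dominated convergence using the uniform bound \eqref{EST} on $U_A(a,0)$. Since each $S_*Q^\lambda$ is compact and the leading eigenvalue is simple, the continuity of $\lambda\mapsto r(S_*Q^\lambda)$ follows from the standard upper-semicontinuity of isolated spectral values together with simplicity. Strict monotonicity is the observation that for $\lambda_1<\lambda_2$ one has $e^{-\lambda_1(a-\sigma)}>e^{-\lambda_2(a-\sigma)}$ for $a>\sigma$, so $S_*Q^{\lambda_1}\ge S_*Q^{\lambda_2}$ with a strictly positive difference on a quasi-interior element; the strong comparison principle for irreducible compact positive operators (applied to the eigenvector of the smaller operator paired with an eigenfunctional of the dual) then gives $r(S_*Q^{\lambda_1})>r(S_*Q^{\lambda_2})$.

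For the limits, the case $\lambda\to+\infty$ is easy: the norm estimate $\|S_*Q^\lambda\|_{\ml(L_p)}\le C\int_0^{a_m}\|b(a)\|_\infty e^{-\lambda a}\,\rd a$ forces $\|S_*Q^\lambda\|\to 0$, hence $r(S_*Q^\lambda)\to 0$. The limit $\lambda\to-\infty$ is the delicate one and is likely the main obstacle. The idea is to extract a quantitative lower bound: pick any subinterval $[a_1,a_2]\subset J$ on which $b$ has positive essential infimum (possible since $b\not\equiv 0$), fix a positive test function $\varphi\in L_p^+(\Omega)$, and use the strong positivity/Harnack property of the parabolic evolution operator $U_A$ to find a constant $c>0$ and a strictly positive function $\psi$ with $U_A(a,0)\varphi\ge c\,\psi$ for $a\in[a_1,a_2]$. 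This produces the pointwise estimate $S_*Q^\lambda\varphi\ge c'\,e^{-\lambda a_1}\psi$ for $\lambda\le 0$. Testing this inequality against a strictly positive eigenfunctional of the adjoint (which exists by Krein--Rutman applied to the dual) gives $r(S_*Q^\lambda)\ge c''\,e^{-\lambda a_1}\to \infty$ as $\lambda\to -\infty$.

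The main technical hurdle is therefore the lower bound $U_A(a,0)\varphi\ge c\,\psi$ uniform on a subinterval where $b>0$; this requires the strict positivity (Harnack-type estimate) for the parabolic evolution operator generated by the time-dependent family $A(a)$ in \eqref{A1nott}, which is standard for second-order uniformly elliptic operators with bounded coefficients but must be invoked carefully under the regularity \eqref{A1}--\eqref{A2}. All remaining ingredients reduce to routine Krein--Rutman arguments and the norm bounds already recorded in \eqref{EST2}--\eqref{EST}.
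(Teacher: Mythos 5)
Note first that the paper does not actually prove this lemma: it is imported verbatim from \cite{WalkerMOFM} (Lemmas~2.4 and~2.5), and the surrounding text only records the ingredients -- compactness of $S_*Q^\lambda$ via \eqref{QQ} and irreducibility for $p>n$ via \cite{DanersKochMedina} -- before stating that the spectral radius ``is characterized by the Krein--Rutman Theorem''. Your reconstruction follows exactly that indicated route, and most of it is sound: it is de~Pagter's theorem (nonzero \emph{and} compact \emph{and} irreducible; nonzero positive compact alone does not exclude $r=0$, cf.\ Volterra operators) that gives $r(S_*Q^\lambda)>0$, after which Krein--Rutman yields simplicity, the quasi-interior eigenvector, and uniqueness of the eigenvalue with positive eigenvector; the bootstrap to $W_{p,\mathcal{B}}^1(\Omega)$ via \eqref{QQ} is correct; continuity via analyticity of $\lambda\mapsto S_*Q^\lambda$ plus perturbation of the isolated simple eigenvalue works (add a word on why the perturbed eigenvalue remains the spectral radius, e.g.\ positivity of the perturbed spectral projection or a sandwich using monotonicity); strict monotonicity by the comparison theorem for irreducible compact positive operators is standard; and the norm bound settles $\lambda\to+\infty$.

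The one step that does not close as written is $\lambda\to-\infty$. From $S_*Q^\lambda\varphi\ge c'e^{-\lambda a_1}\psi$ you pair with the positive eigenfunctional $\zeta_\lambda'$ of the adjoint, which gives $r(S_*Q^\lambda)\ge c'e^{-\lambda a_1}\langle \zeta_\lambda',\psi\rangle/\langle\zeta_\lambda',\varphi\rangle$; since $\zeta_\lambda'$ varies with $\lambda$, there is no $\lambda$-uniform constant $c''$ and hence no divergence. The repair is to arrange a sub-eigenvector inequality with the \emph{same} function on both sides: pick $a_1>0$ with $b\not\equiv 0$ on $[a_1,a_m]\times\Omega$, a compact set $K\subset\Omega$ of positive measure on which $S_*\int_{a_1}^{a_m}b(a,\cdot)\,\rd a$ is bounded below by a positive constant (note that a subinterval on which $b$ has positive essential infimum need not exist, as $b$ is only $L_\infty$ in $a$; positivity of the integral on an open set suffices), and set $\varphi_0:=\mathbf{1}_K$. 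The strong positivity of the parabolic evolution operator gives $\inf\{[U_A(a,0)\varphi_0](x)\,;\,a\in[a_1,a_m],\ x\in K\}>0$, whence $S_*Q^\lambda\varphi_0\ge c\,e^{-\lambda a_1}\varphi_0$ for $\lambda\le 0$; and for a positive operator $T$ with $T\varphi_0\ge\mu\varphi_0$ and $\varphi_0\ge 0$ nontrivial one has $r(T)\ge\mu$ directly (iterate and use that the $L_p$-norm is a lattice norm), with no adjoint eigenfunctional needed. With this modification your argument is complete and coincides in spirit with the cited proof.
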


Now, in order to introduce the semigroup associated with~\eqref{C}, we recall from~\cite[Lemma 5.1]{WalkerIUMJ} that there exists a mapping 
\begin{equation*}
B: [(S_0,I_0)\mapsto B_{(S_0,I_0)}]\in\ml \big(L_p(\Omega)\times L_1(J,L_p(\Omega)),
C(\R^+,L_p(\Omega))\big)
\end{equation*}
such that $B=B_{(S_0,I_0)}$ is for given $(S_0,I_0)\in L_p(\Omega)\times L_1(J,L_p(\Omega))$ the unique solution to
the Volterra equation\footnote{We recall again that  for $t>a_m$, integral $\int_0^t\rd a$ equal $\int_0^{a_m}\rd a$ and integrals $\int_0^{a_m-t}\rd a$ vanish.} 
\begin{subequations}\label{100}
    \begin{equation}\label{500}
    B(t)=S_*\int_0^t \, b(a)\, U_{A}(a,0)\, B(t-a)\, \rd
    a + S_*\int_0^{a_m-t} b(a+t)\, U_{A}(a+t,a)\, I_0(a)\, \rd a +q_*e^{t A_1^*} S_0\, ,
    \end{equation}
for $t\ge 0$. If $(S_0,I_0)\in L_p^+(\Omega)\times L_1^+(J,L_p(\Omega))$, then $B_{(S_0,I_0)}(t)\in L_p^+(\Omega)$ for $t\ge 0$. Now, given $(S_0,I_0)\in L_p(\Omega)\times L_1(J,L_p(\Omega))$, define
  \begin{equation}\label{501}
     \big[\I_*(t) (S_0,I_0)\big](a)\, :=\, \left\{ \begin{aligned}
    &U_{A}(a,a-t)\, I_0(a-t)\, ,& &   (a,t)\in J\times \R^+\,,\quad  t\le a\,, \\
    & U_{A}(a,0)\, B_{(S_0,I_0)}(t-a)\, ,& & (a,t)\in J\times \R^+\,,\quad   t>a\, , 
    \end{aligned}
   \right.
    \end{equation}	
and
\begin{equation}\label{SG}
     \T_*(t) (S_0,I_0)\, :=\, \big(e^{t A_1^*} S_0\,,\, \I_*(t) (S_0,I_0)\big)\, , \qquad t\ge 0\,.
    \end{equation}
\end{subequations}	
Then $(\T_*(t))_{t\ge 0}$ defines a strongly continuous semigroup on $L_p(\Omega)\times L_1(J,L_p(\Omega))$:

\begin{thm}\label{T0}
Suppose  \eqref{A} and let $(S_*,I_*)$ be a steady state to~\eqref{E} satisfying~\eqref{ss}. Define $(\T_*(t))_{t\ge 0}$ on $L_p(\Omega)\times L_1(J,L_p(\Omega))$ with $p>n$ according to~\eqref{100}. \vspace{2mm}

{\bf (a)} $(\T_*(t))_{t\ge 0}$ is a strongly continuous, eventually compact, positive semigroup on the space $ L_p(\Omega)\times L_1(J,L_p(\Omega))$.\vspace{2mm}

{\bf (b)} Let $\A_*$ be the infinitesimal generator of the semigroup~$(\T_*(t))_{t\ge 0}$.  Then $(\phi,\psi)\in \dom(\A_*)$ if and only if $(\phi,\psi)\in W_{p,\mathcal{B}}^{2}(\Omega)\times C(J,L_p(\Omega))$ and there exists $\zeta\in L_1(J,L_p(\Omega))$ such that $\psi$ is the mild solution to
\begin{subequations}\label{psi}
\begin{equation}\label{psi1}
\partial_a\psi = A(a)\psi +\zeta(a)\,,\quad a\in J\,,\qquad \psi(0)=S_*\int_0^{a_m} b(a) \psi(a)\,\rd a + q_*\phi\,.
\end{equation}
In this case, 
\begin{equation}\label{psi2}
\A_* ( \phi  , \psi) = \left( A_1^*\phi  , -\zeta \right)\,.
\end{equation}
\end{subequations}

{\bf (c)}  $\A_*$ has compact resolvent.\vspace{2mm}

{\bf (d)} The spectral bound $s(\A_*)$ and the growth bound $\omega_0(\T_*)$ are equal.

\end{thm}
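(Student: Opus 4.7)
The plan is to extract every statement of the theorem directly from the explicit formulas \eqref{100}--\eqref{SG} and the smoothing estimates \eqref{EST2}--\eqref{EST}, treating $\T_*(t)$ as a concrete operator rather than going through an abstract generation theorem.

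For part (a), I would first verify the semigroup identity $\T_*(t+s)=\T_*(t)\T_*(s)$. Componentwise, the first entry uses the semigroup property of $e^{tA_1^*}$, while for the second entry one checks that both $B_{(S_0,I_0)}(t+s)$ and $B_{\T_*(s)(S_0,I_0)}(t)$ solve the same Volterra equation \eqref{500} and conclude by uniqueness; the evolution property of $U_A$ then matches the two pieces of \eqref{501}. Positivity is immediate from positivity of $e^{tA_1^*}$ and $U_A(a,\sigma)$ combined with a Neumann-series iteration of \eqref{500}. Strong continuity on $L_p(\Omega)\times L_1(J,L_p(\Omega))$ follows from the strong continuity of $(e^{tA_1^*})_{t\ge 0}$, the continuity $t\mapsto B_{(S_0,I_0)}(t)$ in $L_p(\Omega)$, and the strong continuity of translations in $L_1(J,L_p(\Omega))$, using essentially the same case analysis as in the proof of Proposition~\ref{P1}. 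For eventual compactness, observe that for $t>a_m$ the initial datum $I_0$ drops out of \eqref{501}, leaving $[\I_*(t)(S_0,I_0)](a)=U_A(a,0)\,B_{(S_0,I_0)}(t-a)$ for all $a\in J$; the smoothing estimate \eqref{EST} places this function in $L_1(J,W_{p,\mathcal{B}}^{2\theta}(\Omega))$ for some $2\theta\in(0,1)$ with $2\theta\notin\{\delta+1/p\}$, and a Kolmogorov--Riesz type argument exploiting the continuity of $B_{(S_0,I_0)}$ together with the compact embedding $W_{p,\mathcal{B}}^{2\theta}(\Omega)\hookrightarrow\hookrightarrow L_p(\Omega)$ yields compactness; the first component $S_0\mapsto e^{tA_1^*}S_0$ is compact because $A_1^*$ generates an analytic semigroup on a bounded domain.

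For part (b), I would compute the generator from the definition $\A_*\xi=\lim_{h\to 0^+}h^{-1}(\T_*(h)\xi-\xi)$. On the first coordinate the limit reduces to $A_1^*\phi$ and forces $\phi\in W_{p,\mathcal{B}}^2(\Omega)$. On the second coordinate, for fixed $a>0$ and small $h$ one has $[\I_*(h)(\phi,\psi)](a)=U_A(a,a-h)\psi(a-h)$, so the existence of the limit in $L_1(J,L_p(\Omega))$ forces $\psi$ to be a mild solution of $\partial_a\psi=A(a)\psi+\zeta(a)$ for some $\zeta\in L_1(J,L_p(\Omega))$, with $\A_*(\phi,\psi)=(A_1^*\phi,-\zeta)$. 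The boundary condition $\psi(0)=S_*\int_0^{a_m}b(a)\psi(a)\,\mathrm{d}a+q_*\phi$ comes from matching $\psi(0)$ with $B_{(\phi,\psi)}(0)$, which reads exactly this expression from \eqref{500}, and requires $\psi\in C(J,L_p(\Omega))$; conversely, given any $(\phi,\psi)$ satisfying these conditions, one checks directly that $\T_*(h)(\phi,\psi)$ is differentiable at $h=0^+$ with the prescribed derivative.

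For part (c), I would construct $R(\lambda,\A_*)$ explicitly. Given $(f,g)\in L_p(\Omega)\times L_1(J,L_p(\Omega))$, the equation $(\lambda-\A_*)(\phi,\psi)=(f,g)$ decouples: $\phi=R(\lambda,A_1^*)f\in W_{p,\mathcal{B}}^2(\Omega)$, while the mild-solution representation gives
$$
\psi(a)=U_A^\lambda(a,0)\psi(0)+\int_0^a U_A^\lambda(a,\sigma)g(\sigma)\,\mathrm{d}\sigma\,,
$$
and the boundary condition transforms into $(I-S_*Q^\lambda)\psi(0)=q_*\phi+S_*\int_0^{a_m}b(a)\int_0^a U_A^\lambda(a,\sigma)g(\sigma)\,\mathrm{d}\sigma\,\mathrm{d}a$. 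By Lemma~\ref{L0}, $r(S_*Q^\lambda)<1$ for $\lambda$ real and large enough, so $(I-S_*Q^\lambda)^{-1}$ exists and $\psi(0)$ is uniquely determined. Compactness of $R(\lambda,\A_*)$ then follows from (i) compactness of $R(\lambda,A_1^*)$ on $L_p(\Omega)$, (ii) the smoothing \eqref{EST} placing $\psi$ in $L_1(J,W_{p,\mathcal{B}}^{2\theta}(\Omega))$, and (iii) equicontinuity in $a$ derived from the regularizing property of $U_A^\lambda$ applied to $\psi(0)$ plus continuity of the convolution; the resolvent identity propagates compactness to every $\lambda\in\rho(\A_*)$.

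For part (d), the eventual compactness established in (a) implies eventual norm continuity of $(\T_*(t))_{t\ge 0}$, so the spectral mapping theorem (Engel--Nagel, Theorem IV.3.10) gives $\sigma(\T_*(t))\setminus\{0\}=e^{t\sigma(\A_*)}$ for every $t>0$, from which $s(\A_*)=\omega_0(\T_*)$ is immediate. The main obstacle I foresee is the rigorous justification of eventual compactness in (a) and of compactness of the resolvent in (c): both reduce to compactness of an operator of the form $g\mapsto\bigl[a\mapsto\int_0^a U_A^\lambda(a,\sigma)g(\sigma)\,\mathrm{d}\sigma\bigr]$ in $L_1(J,L_p(\Omega))$, and while pointwise compactness via \eqref{EST} is straightforward, combining it with uniform equicontinuity in the age variable needs some care; the arguments in \cite{WalkerIUMJ} should provide a template.
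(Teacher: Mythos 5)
Your parts (a), (c), and (d) follow essentially the same route as the paper. For (a) the paper delegates the direct verification of the semigroup property, positivity, and strong continuity to \cite{WebbSpringer,WalkerIUMJ}, and your observation that $I_0$ drops out of \eqref{501} for $t>a_m$ is exactly the mechanism behind the cited eventual-compactness argument. For (c) your explicit resolvent formula, the inversion of $1-S_*Q^\lambda$ via Lemma~\ref{L0}, and the identification of the convolution operator $g\mapsto\bigl[a\mapsto\int_0^a U_A^\lambda(a,\sigma)g(\sigma)\,\rd\sigma\bigr]$ as the delicate compactness issue all match the paper; be aware that boundedness in $L_1(J,W_{p,\mathcal{B}}^{2\theta}(\Omega))$ alone does \emph{not} give relative compactness in $L_1(J,L_p(\Omega))$ --- the paper resolves this by the regularization $v_j^\mu(a)=U_A^\lambda(\mu+a,a)u_j(a)$, proving equi-integrability for each $\mu>0$ and uniform convergence as $\mu\to 0$ (adapted from the semigroup case in the literature), which is precisely the ``care'' you anticipate. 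Part (d) is identical (eventual compactness $\Rightarrow$ eventual norm continuity $\Rightarrow$ spectral mapping).

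Part (b) is where you genuinely diverge, and where your proposal has a gap. You compute $\A_*$ from the difference quotient and assert that convergence of $h^{-1}\bigl(\I_*(h)(\phi,\psi)-\psi\bigr)$ in $L_1(J,L_p(\Omega))$ ``forces $\psi$ to be a mild solution'' with the stated boundary condition. That implication \emph{is} the content of the only-if direction, and it is not immediate: the convergence is only in $L_1$, so arguing ``for fixed $a>0$'' gives nothing pointwise; extracting the mild-solution identity requires an integrated (Fubini-type) argument on $\sigma\mapsto U_A(a,\sigma)\psi(\sigma)$, and the boundary condition $\psi(0)=B_{(\phi,\psi)}(0)$ together with the continuity $\psi\in C(J,L_p(\Omega))$ must be wrung out of the behaviour of the difference quotient on the shrinking interval $(0,h)$, where \eqref{501} switches branches. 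The converse direction likewise needs more than ``one checks directly'': either a genuine computation of the $h\to 0^+$ limit including the $B$-branch, or (as the paper does) an identification argument. The paper sidesteps all of this by characterizing $\dom(\A_*)$ as the range of the resolvent, computing $(\lambda-\A_*)^{-1}$ via the Laplace transform of \eqref{SG}, and using the injectivity of $1-S_*Q^\lambda$ for large $\lambda$ (Lemma~\ref{L0}) to identify the candidate $(\phi,\psi)$ with an element of the range. Since you need the explicit resolvent for part (c) anyway, you get part (b) almost for free by that route; as written, your difference-quotient argument leaves its central step unproven.
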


\begin{proof}
\noindent {\bf (a) Semigroup:} One may follow the lines of the proof of \cite[Theorem 4]{WebbSpringer} to show that $(\T_*(t))_{t\ge 0}$ defines a strongly continuous  positive  semigroup on $L_p(\Omega)\times L_1(J,L_p(\Omega))$ (see also \cite{WalkerMOFM,WalkerIUMJ}). That this semigroup is eventually compact follows as in \cite[Theorem~1.2~(a)]{WalkerIUMJ} invoking  Kolmogorov's compactness criterion and using the compact embedding of  $W_{p,\mathcal{B}}^2(\Omega)$ into  $L_p(\Omega)$.\vspace{2mm}

\noindent {\bf (b) Generator:} The proof of the characterization of the generator $\A_*$ is mostly along the lines of the proof of \cite[Theorem~1.4~(a)]{WalkerIUMJ}, though a bit tricky. We provide some details here. The key is to derive a description of the resolvent $(\lambda-\A_*)^{-1}$.

{\bf (i)}  To this end, we fix $\lambda>0$ large enough (in particular in the resolvent set of $\A_*$) and write
$$
(\lambda-\A_*)^{-1}(S_0,I_0)=(\phi,\psi)\in L_p(\Omega)\times L_1(J,L_p(\Omega))
$$
for $(S_0,I_0)\in L_p(\Omega)\times L_1(J,L_p(\Omega))$. Then, using the Laplace transform formula 
$$
(\lambda-\A_*)^{-1}(S_0,I_0)=\int_0^\infty e^{-\lambda t}\, \T_*(t) (S_0,I_0)\, \rd t
$$
and recalling \eqref{SG}, we readily obtain
\begin{equation*}%
\phi=\int_0^\infty e^{-\lambda t}\, e^{tA_1^*} S_0\, \rd t=(\lambda-A_1^*)^{-1}S_0
\end{equation*}
and using \eqref{501}, for  $a\in J$,
\begin{equation}
\begin{split}\label{eq2}
\psi(a)&=\int_0^\infty e^{-\lambda t}\, \big[\I_*(t) (S_0,I_0)\big](a)\, \rd t\\
& =\int_0^a U_{A}^\lambda(a,t)\,I_0(t)\, \rd t +U_{A}^\lambda(a,0) \int_0^\infty e^{-\lambda t} B_{(S_0,I_0)}(t)\, \rd t\,.  
\end{split}
\end{equation}
Invoking \eqref{eq2}, \eqref{500}, and \eqref{Q} we derive  in particular
that
\begin{equation*}
\begin{split}
\psi(0)&=\int_0^\infty e^{-\lambda t} B_{(S_0,I_0)}(t)\, \rd t\\
&=\int_0^\infty e^{-\lambda t}\, S_*\int_0^{\min\{t,a_m\}} \, b(a)\, U_{A}(a,0)\, B_{(S_0,I_0)}(t-a)\, \rd
    a\,\rd t\\
&\quad + \int_0^{a_m} e^{-\lambda t}S_*\int_0^{a_m-t} b(a+t)\, U_{A}(a+t,a)\, I_0(a)\, \rd a\,\rd t +\int_0^\infty e^{-\lambda t}q_*e^{t A_1^*} S_0\,\rd t\\
&= S_*\int_0^{a_m} b(a)\, U_{A}^\lambda(a,0)\,\rd a \,\psi(0) + S_*\int_0^{a_m} b(a)\int_0^a U_{A}^\lambda(a,t)\, I_0(t) \,\rd t\,\rd a +q_*(\lambda-A_1^*)^{-1}S_0\\
&= S_*Q^\lambda\psi(0) + S_*\int_0^{a_m} b(a)\int_0^a U_{A}^\lambda(a,t)\, I_0(t) \,\rd t\,\rd a +q_*\phi\,.
\end{split}
\end{equation*}
Summarizing, we have shown that if $(S_0,I_0)\in L_p(\Omega)\times L_1(J,L_p(\Omega))$ and
$$
(\phi,\psi)=(\lambda-\A_*)^{-1}(S_0,I_0)
$$
for $\lambda>0$ large enough, then
\begin{subequations}\label{p0}
\begin{equation}\label{p1a}
\phi=(\lambda-A_1^*)^{-1}S_0\in W_{p,\mathcal{B}}^2(\Omega)
\end{equation}
and $\psi\in C(J,L_p(\Omega))$ is a mild solution to
\begin{equation}\label{p1}
\partial_a\psi =(-\lambda+A(a))\psi +I_0(a)\,,\quad a\in J\,, 
\end{equation}
subject to
\begin{equation}\label{p2}
\begin{split}
 \psi(0)&= S_*\int_0^{a_m} b(a) \psi(a)\,\rd a+q_*\phi\\
&=S_*Q^\lambda\psi(0) + S_*\int_0^{a_m} b(a)\int_0^a U_{A}^\lambda(a,t)\, I_0(t) \,\rd t\,\rd a +q_*\phi\,.
\end{split}
\end{equation}
\end{subequations}

{\bf (ii)} Consider now an arbitrary $(\phi,\psi)\in\dom (\A_*)\subset L_p(\Omega)\times L_1(J,L_p(\Omega))$. Defining (for $\lambda>0$ large enough)
\begin{equation*}
(S_0,I_0):=(\lambda-\A_*)(\phi,\psi)\in L_p(\Omega)\times L_1(J,L_p(\Omega))\,,
\end{equation*}
it readily follows from  \eqref{p0}  that
\begin{equation*}
\phi=(\lambda-A_1^*)^{-1}S_0\in W_{p,\mathcal{B}}^{2}(\Omega)
\end{equation*}
while $\psi\in C(J,L_p(\Omega))$ is a mild solution to
\begin{equation*}
\partial_a\psi =(-\lambda+A(a))\psi +I_0(a)=A(a)\psi +\zeta(a)\,,\quad a\in J\,,\qquad 
\end{equation*}
with $\zeta:=I_0-\lambda\psi\in L_1(J,L_p(\Omega))$ and
subject to
\begin{equation}\label{p2a}
 \psi(0)= S_*\int_0^{a_m} b(a) \psi(a)\,\rd a+q_*\phi\,.
\end{equation}
This is \eqref{psi1}, while \eqref{psi2} follows from $$\A_*(\phi,\psi)=\lambda (\phi,\psi)-(S_0,I_0)=(A_1^*\phi, -\zeta)\,.$$

{\bf (iii)} Conversely, consider $(\phi,\psi)\in W_{p,\mathcal{B}}^{2}(\Omega)\times C(J,L_p(\Omega))$ with the property that there exists $\zeta\in L_1(J,L_p(\Omega))$ such that $\psi$ is the mild solution to
\begin{equation*}
\partial_a\psi = A(a)\psi +\zeta(a)\,,\quad a\in J\,,\qquad \psi(0)=S_*\int_0^{a_m} b(a) \psi(a)\,\rd a + q_*\phi\,.
\end{equation*}
Thus, for $\lambda>0$ large enough and
$$
I_0:=\lambda\psi+\zeta\in L_1(J,L_p(\Omega))\,,
$$
we see that $\psi\in C(J,L_p(\Omega))$ is the mild solution to
\begin{equation*}
\partial_a\psi = (-\lambda+A(a))\psi +I_0(a)\,,\quad a\in J\,,\qquad \psi(0)=S_*\int_0^{a_m} b(a) \psi(a)\,\rd a + q_*\phi\,,
\end{equation*}
and thus satisfies
\begin{equation}\label{o}
\psi(0)=S_*Q^\lambda \psi(0) + S_*\int_0^{a_m} b(a)\int_0^a U_{A}^\lambda(a,t)\, I_0(t) \,\rd t\,\rd a +q_*\phi\,.
\end{equation}
Define now
$$
S_0:=(\lambda -A_1^*)\phi\in L_p(\Omega)
$$ 
and
$$
(\bar\phi,\bar\psi):=(\lambda-\A_*)^{-1}(S_0,I_0)\in\dom(\A_*)\,.
$$
Then, according to~\eqref{p0},
\begin{equation*}
\bar\phi=(\lambda-A_1^*)^{-1}S_0=\phi
\end{equation*}
while $\bar\psi\in C(J,L_p(\Omega))$ is the mild solution to
$$
\partial_a\bar\psi =(-\lambda+A(a))\bar\psi+I_0(a)\,,\quad a\in J\,,
$$
subject to
\begin{align}\label{oo}
 \bar\psi(0)&=S_*Q^\lambda\bar\psi(0) + S_*\int_0^{a_m} b(a)\int_0^a U_{A}^\lambda(a,t)\, I_0(t) \,\rd t\,\rd a +q_*\bar\phi\,.
\end{align}
Since $\bar\phi=\phi$, it follows from \eqref{o} and \eqref{oo} that
$$
(1-S_*Q^\lambda)\bar\psi(0)=(1-S_*Q^\lambda)\psi(0)
$$
and hence $\bar\psi(0)=\psi(0)$ according to Lemma~\ref{L0} for $\lambda>0$ large enough (so that $r(S_*Q^\lambda)<1$). Consequently, $\bar\psi=\psi$ and therefore 
$$
(\phi,\psi)=(\bar\phi,\bar\psi)\in \dom(\A_*)\,.
$$ 
This proves part {\bf (b)}.\vspace{2mm}

\noindent {\bf (c) Compact Resolvent:} In order to show that  $\A_*$ has compact resolvent, let $\lambda>0$ again be sufficiently large (i.e. $\lambda$ in the resolvent set of $\A_*$ and $r(S_*Q^\lambda)<1$). Let $(S_{0,j},I_{0,j})_{j\in\N}$ be a bounded sequence in $L_p(\Omega)\times L_1(J,L_p(\Omega))$ and set
$$
(\phi_j,\psi_j):=(\lambda-\A_*)^{-1}(S_{0,j},I_{0,j})\,.
$$
Then \eqref{p1a} yields $\phi_j=(\lambda-A_1^*)^{-1}S_{0,j}$ so that $(\phi_j)_{j\in\N}$ is a bounded sequence in $W_{p,\mathcal{B}}^2(\Omega)$, the latter being compactly embedded in $L_p(\Omega)$. It remains to show that $(\psi_j)_{j\in\N}$ is relatively compact in $L_1(J,L_p(\Omega))$ for which we first note from \eqref{p1}-\eqref{p2} that
\begin{equation}\label{p10}
\psi_j(a) =U_A^\lambda(a,0)\psi_j(0)+\int_0^aU_A^\lambda(a,\sigma) I_{0,j}(\sigma)\,\rd \sigma\,,\quad a\in J\,,
\end{equation}
and
\begin{equation}\label{p20}
 (1-S_*Q^\lambda)\psi_j(0)=  S_*\int_0^{a_m} b(a)\int_0^a U_{A}^\lambda(a,\sigma)\, I_{0,j}(\sigma) \,\rd \sigma\,\rd a +q_*\phi_j\,.
\end{equation}
In particular, since by \eqref{A3} and \eqref{EST} 
\begin{align*}
\Bigg\|\int_0^{a_m} b(a)&\int_0^a U_{A}^\lambda(a,\sigma)\, I_{0,j}(\sigma) \,\rd \sigma\,\rd a \Bigg\|_{W_{p,\mathcal{B}}^{1}(\Omega)}
\\
&\le c \|b\|_{L_\infty(J,C^1(\Omega))}\, \int_0^{a_m} \int_0^a (a-\sigma)^{-1/2}\, \|I_{0,j}(\sigma) \|_{L_p(\Omega)}\,\rd \sigma\,\rd a\\
&\le c_1 \int_0^{a_m} \|I_{0,j}(\sigma) \|_{L_p(\Omega)} \int_\sigma^{a_m} (a-\sigma)^{-1/2} \,\rd a\,\rd \sigma\le c_2 \|I_{0,j} \|_{L_1(J,L_p(\Omega))}\,,
\end{align*}
the sequence
$$
\left(\int_0^{a_m} b(a)\int_0^a U_{A}^\lambda(a,\sigma)\, I_{0,j}(\sigma) \,\rd \sigma\,\rd a\right)_{j\in\N}
$$
is bounded in $W_{p,\mathcal{B}}^{1}(\Omega)$, and
we thus deduce from \eqref{p20}, \eqref{QQ}, and $r(S_*Q^\lambda)<1$ that 
\begin{equation}\label{pq}
(\psi_j(0))_{j\in\N}\ \text{  is bounded in}\ W_{p,\mathcal{B}}^{1}(\Omega)\,.
\end{equation} 
Setting
$$
u_j(a):=\int_0^aU_A^\lambda(a,\sigma) I_{0,j}(\sigma)\,\rd \sigma\,,\quad a\in J\,,\quad j\in\N\,,
$$
we next show that $\{u_j\,;\, j\in\N\}$ is relatively compact in $L_1(J,L_p(\Omega))$ adopting the arguments from~\cite{BarasHassanVeron} (there the case of semigroups was considered): \vspace{2mm}

{\bf (i)}  We first fix $\mu>0$ and define
$$
v_j^\mu(a):=U_A^\lambda(\mu+a,a)u_j(a)=\int_0^aU_A^\lambda(\mu+a,\sigma) I_{0,j}(\sigma)\,\rd \sigma\,,\quad a\in J\,,\quad j\in\N\,.
$$
Since 
$$
\|u_j(a)\|_{L_p(\Omega)}\le \|I_{0,j}\|_{L_1(J,L_p(\Omega))}\,, \quad a\in J\,,\quad j\in\N\,,
$$
and 
$$
U_A^\lambda(\mu+a,a)\in\ml\big(L_p(\Omega),W_{p,\mathcal{B}}^2(\Omega)\big)\subset \mathcal{K}\big(L_p(\Omega)\big)\,,\quad \quad a\in J\,,
$$
we see that, for every $a\in J$, the sequence $(v_j^\mu(a))_{j\in\N}$ is relatively compact in $L_p(\Omega)$. Next, in order to check equi-integrability we recall from \cite[II.~Equation~(2.1.2)]{LQPP} that
$$
U_A^\lambda\in C\big(\Delta_J^*,\ml(L_p(\Omega))\big) \ \text{ with }\ \Delta_J^*:=\{(a,\sigma)\,;\, 0\le \sigma< a\le a_m\}\,,
$$
and note for $\xi>0$ that the set 
$$
K_\xi:=\{(a,\sigma)\in \Delta_J^*\,;\,  \sigma+\xi\le  a\}
$$
is compact in $\Delta_J^*$. We thus find for every $\ve>0$ and $\xi>0$ some $\eta>0$ such that
$$
\|U_A(a_1,\sigma_1)-U_A(a_2,\sigma_2)\|_{\ml(L_p(\Omega))}\le \ve\,,\qquad (a_i,\sigma_i)\in K_\xi\,,\quad \vert (a_1,\sigma_1)-(a_2,\sigma_2)\vert\le \eta\,.
$$
Taking $\ve>0$ arbitrary and $\xi=\mu>0$, we use \eqref{EST2} to derive, for $h\in (0,\eta)$ with  $0<a\le a+h\le a_m$,
\begin{align*}
\|v_j^\mu(a+h)-v_j^\mu(a)\|_{L_p(\Omega)}&\le
 \int_a^{a+h}\|U_A^\lambda(\mu+a+h,\sigma)\|_{\ml(L_p(\Omega))}\,\|I_{0,j}(\sigma)\|_{L_p(\Omega)}\,\rd\sigma\\
&\quad + \int_0^{a}\|U_A^\lambda(\mu+a+h,\sigma)-U_A^\lambda(\mu+a,\sigma)\|_{\ml(L_p(\Omega))}\,\|I_{0,j}(\sigma)\|_{L_p(\Omega)}\,\rd\sigma\\
&\le \int_a^{a+h}\|I_{0,j}(\sigma)\|_{L_p(\Omega)}\,\rd\sigma+\ve  \int_0^{a}\|I_{0,j}(\sigma)\|_{L_p(\Omega)}\,\rd\sigma
\end{align*}
and therefore
\begin{align*}
\int_0^{a_m}\|\tilde v_j^\mu(a+h)-\tilde v_j^\mu(a)\|_{L_p(\Omega)}\,\rd a &\le
 h \|I_{0,j} \|_{L_1(J,L_p(\Omega))}+\ve a_m \|I_{0,j} \|_{L_1(J,L_p(\Omega))}\,,
\end{align*}
where the tilde refers to the trivial extension. Hence, 
\begin{align*}
\lim_{h\to 0}\, \sup_{j\in\N}\int_0^{a_m}\|\tilde v_j^\mu(a+h)-\tilde v_j^\mu(a)\|_{L_p(\Omega)}\,\rd a =0
\end{align*} 
so that $\{v_j^\mu\,;\, j\in\N\}$  is equi-integrable and thus
\begin{align}\label{equi}
\{v_j^\mu\,;\, j\in\N\}\ \text{  is relatively compact in  $L_1(J,L_p(\Omega))$ for $\mu>0$}\,.
\end{align} 

{\bf (ii)} We next consider the limit $\mu\to 0$. Given  $\ve>0$, $\xi>0$, and using the notation from the previous part we have, for $a\in J$ with $a\ge \xi$ and $0<\mu<\eta$,
\begin{align*}
\|v_j^\mu(a)-u_j(a)\|_{L_p(\Omega)}&\le
 \int_0^{a-\xi}\|U_A^\lambda(\mu+a,\sigma)-U_A^\lambda(a,\sigma)\|_{\ml(L_p(\Omega))}\,\|I_{0,j}(\sigma)\|_{L_p(\Omega)}\,\rd\sigma\\
&\quad +  \int_{a-\xi}^a \big(\|U_A^\lambda(\mu+a,\sigma)\|_{\ml(L_p(\Omega))}+\|U_A^\lambda(a,\sigma)\|_{\ml(L_p(\Omega))}\big)\,\|I_{0,j}(\sigma)\|_{L_p(\Omega)}\,\rd\sigma\\
&\le \ve \int_0^{a-\xi} \|I_{0,j}(\sigma)\|_{L_p(\Omega)}\,\rd\sigma + 2\int_{a-\xi}^a \|I_{0,j}(\sigma)\|_{L_p(\Omega)}\,\rd\sigma\,, 
\end{align*}
while for $0\le a\le \xi$ we have
\begin{align*}
\|v_j^\mu(a)-u_j(a)\|_{L_p(\Omega)}&\le  2\int_0^{\xi} \|I_{0,j}(\sigma)\|_{L_p(\Omega)}\,\rd\sigma\,.
\end{align*}
Therefore,
\begin{align*}
\|v_j^\mu-u_j\|_{L_1(J,L_p(\Omega))}&\le  4 \xi \|I_{0,j}\|_{L_1(J,L_p(\Omega))}+\ve a_m \|I_{0,j}\|_{L_1(J,L_p(\Omega))}
\end{align*}
so that
\begin{align*}
\lim_{\mu\to 0}\, \sup_{j\in\N}\, \|v_j^\mu-u_j\|_{L_1(J,L_p(\Omega))}=0\,.
\end{align*}
Together with \eqref{equi} we conclude that $\{u_j\,;\, j\in\N\}$ is relatively compact in $L_1(J,L_p(\Omega))$.\vspace{2mm}

{\bf (iii)} Finally,  since  
$$
\|U_A^\lambda (a+h,0)-U_A^\lambda (a,0)\|_{\ml(W_{p,\mathcal{B}}^{1}(\Omega),L_p(\Omega))}\le c h^{1/4}\,,\quad 0\le a\le a+h\le a_m\,,
$$
according to \cite[II.~Equation~(5.3.8]{LQPP}, it readily follows from \eqref{pq} and the Arzel\`a - Ascoli Theorem that $(U_A^\lambda (\cdot,0)\psi_j(0))_{j\in\N}$ is relatively compact in $C(J,L_p(\Omega))$. Consequently, we deduce from the previous step {\bf (ii)} and~\eqref{p10} that $(\psi_j)_{j\in\N}$ is relatively compact in $L_1(J,L_p(\Omega))$. Therefore, $\A_*$ has indeed a compact resolvent.\vspace{2mm}

\noindent {\bf (d) Spectral Bound:} Since $\T_*$ is eventually compact, it follows from \cite[IV.~Corollary 3.12]{EngelNagel} that $s(\A_*)=\omega_0(\T_*)$.

\end{proof}

Introducing now the perturbation
$$
\B_*:=\left(\begin{matrix}0 & -P_* +N\\ 0&0 \end{matrix}\right)\in \ml\big(L_p(\Omega)\times L_1(J,L_p(\Omega))\big)
$$
with $P_*$ and $N$ defined in \eqref{qnot}
and observing from \eqref{psi} that $\A_*+\B_*$ is exactly the linearized operator appearing in \eqref{C1} subject to~\eqref{C2}, we obtain  the desired generation result for the linearization:

\begin{cor}\label{C43}
Let the assumptions of Theorem~\ref{T0} be satisfied. Then, the generator 
 $\A_*+\B_*$ of the strongly continuous semigroup $(\mS_*(t))_{t\ge 0}$ on $L_p(\Omega)\times L_1(J,L_p(\Omega))$, associated with the linearized problem~\eqref{C}, has compact resolvent. In particular, the spectrum $\sigma(\A_*+\B_*)$ is a pure point spectrum without finite accumulation point.
\end{cor}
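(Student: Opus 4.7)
The plan is to treat $\B_*$ as a bounded perturbation of $\A_*$ and then transfer both the generation property and the compactness of the resolvent from $\A_*$ to $\A_* + \B_*$.

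First, observe that $\B_*\in \ml\big(L_p(\Omega)\times L_1(J,L_p(\Omega))\big)$ by~\eqref{Pstern} (since $P_*$ and $N$ are bounded into $L_p(\Omega)$ and the first component of $\B_*(\phi,\psi)$ lies in $L_p(\Omega)$ while the second is zero). Because Theorem~\ref{T0}(a) provides that $\A_*$ generates the strongly continuous semigroup $(\T_*(t))_{t\ge 0}$, the standard bounded perturbation theorem (e.g.\ \cite[III.~Theorem~1.3]{EngelNagel}) yields that $\A_*+\B_*$ with domain $\dom(\A_*+\B_*)=\dom(\A_*)$ generates a strongly continuous semigroup $(\mS_*(t))_{t\ge 0}$ on $L_p(\Omega)\times L_1(J,L_p(\Omega))$. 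Inspection of \eqref{psi1}--\eqref{psi2} together with the definition of $\B_*$ shows that this is indeed the semigroup governing the linearization~\eqref{C}.

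Next I would establish the compact resolvent. Fix $\lambda$ in the resolvent set of both $\A_*$ and $\A_*+\B_*$ (such $\lambda$ exist with $\mathrm{Re}\,\lambda$ large enough since $\B_*$ is bounded). Then
\[
(\lambda-\A_*-\B_*)^{-1}=(\lambda-\A_*)^{-1}\bigl(I-\B_*(\lambda-\A_*)^{-1}\bigr)^{-1}.
\]
By Theorem~\ref{T0}(c), $(\lambda-\A_*)^{-1}$ is compact on $L_p(\Omega)\times L_1(J,L_p(\Omega))$; since compact operators form an ideal in the bounded operators, the product on the right is compact. The resolvent identity then propagates compactness to every $\lambda$ in the resolvent set of $\A_*+\B_*$.

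Finally, a densely defined closed operator with compact resolvent has a spectrum consisting only of isolated eigenvalues of finite algebraic multiplicity, with no finite accumulation point (see, e.g., \cite[III.~Proposition~6.29 and IV.~Theorem~1.13]{EngelNagel}). This gives the claim about $\sigma(\A_*+\B_*)$. The only mildly subtle point is ensuring that $\B_*$ really maps into the ambient space $L_p(\Omega)\times L_1(J,L_p(\Omega))$ rather than into $\dom(\A_*)$, but this is immediate from~\eqref{Pstern}; everything else is a direct application of standard semigroup perturbation results, so there is no substantial obstacle.
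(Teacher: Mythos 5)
Your proposal is correct and follows essentially the same route as the paper: the paper simply cites \cite[III.~Proposition~1.12]{EngelNagel}, which packages exactly your two steps (the bounded perturbation theorem for generation and the factorization $(\lambda-\A_*-\B_*)^{-1}=(\lambda-\A_*)^{-1}\bigl(I-\B_*(\lambda-\A_*)^{-1}\bigr)^{-1}$ transferring compactness of the resolvent). Writing out those details explicitly, as you do, is a faithful unpacking of the cited result and introduces no gap.
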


\begin{proof}
This follows from Theorem~\ref{T0} and \cite[III.~Proposition~1.12]{EngelNagel}.  
\end{proof}

\begin{deff}\label{DefStable}
We call the steady state $(S_*,I_*)$ \emph{linearly stable} in $L_p(\Omega)\times L_1(J,L_p(\Omega))$, if the generator $\A_*+\B_*$ of the linearized problem satisfies $\mathrm{Re}\, \sigma(\A_*+\B_*)<0$, while we call $(S_*,I_*)$ \emph{linearly unstable} if $ \sigma(\A_*+\B_*)\cap [\mathrm{Re}\,\lambda>0]\not=\emptyset$. 
\end{deff}

\begin{rem}\label{R2} 
Corollary~\ref{C43} implies that the linear stability of the steady state $(S_*,I_*)$ is determined from (the real parts of) those $\lambda\in \C$ for which there is a nontrivial \mbox{$(S,I)\in W_{p,\mathcal{B}}^{2}(\Omega)\times C(J,L_p(\Omega))$} satisfying
\begin{align*}
\left(\begin{matrix} A_1^* & -P_* +N \\ 0& -\partial_a+A(a) \end{matrix}\right)\left(\begin{matrix} S \\ I \end{matrix}\right) =\lambda \left(\begin{matrix} S \\ I \end{matrix}\right)\,,\qquad I(0)-P_*I=q_*S\,,
\end{align*}
(equality for the second component in the sense of Theorem~\ref{T0}) 
with notation introduced in \eqref{STAR}.
\end{rem}


{  \begin{rem}\label{R1a}
Since the semigroup $(e^{t\A_*})_{t\ge 0}$ is eventually compact and due to the particular (nonlocal) form of the perturbation $\B_*$, one can in fact show that the perturbation semigroup $\mS_*=(e^{t(\A_*+\B_*)})_{t\ge 0}$ is also eventually compact. Consequently,
$$
\omega(\mS_*)=s(\A_*+\B_*)\,,
$$
that is, the growth bound of the semigroup $\mS_*$ and the spectral bound of its generator $\A_*+\B_*$ coincide. A steady state $(S_*,I_*)$ is thus linearly stable in the sense of Definition~\ref{DefStable} if and only if the semigroup $\mS_*$ associated with the linearization around this steady state has an exponential decay. One can further prove that this indeed implies the asymptotic stability of the steady state. The technical details of the proof follow along the lines of \cite{WalkerInstabiliy} (see also \cite{WalkerZehetbauer}).
\end{rem}
}

{ 
\section{Linearized Stability: Proof of Theorem~\ref{TX}}\label{Sec4B}

We shall apply the results from the previous section. In the following, we are still imposing~\eqref{A} with $p>(2\vee n)$ and assume for simplicity~\eqref{Aa}. 
Recall that $\kappa_1, \kappa_2>0$. We consider only steady states $(S_*,I_*)$ to \eqref{E} with regularity as in~\eqref{ss}.

For the investigation of stability of steady states  recall that the spectrum of the Laplacian is (counted according to multiplicity)
$$
 \sigma(-\Delta_\mathcal{B})= \{\mu_0,\mu_1,\mu_2,\ldots\}
$$
with $0\le \mu_i\le \mu_{i+1}$ for $i\ge 0$. In fact, $\mu_0>0$ in the Dirichlet case $\delta=0$ and  $\mu_0=0$ in the Neumann case~$\delta=1$.

\subsection{The Trivial Steady State}

The linear stability of the trivial steady state $(S_*,I_*)=(0,0)$  depends on the sign of $\kappa_1-\mu_0$ (which is always positive for the Neumann case $\delta=1$ but may be negative in the Dirichlet case $\delta=0$):

\begin{prop}\label{P99x}
The trivial steady state $(S_*,I_*)=(0,0)$ is linearly  unstable if $\kappa_1>\mu_0$ and linearly stable if $\kappa_1<\mu_0$.
\end{prop}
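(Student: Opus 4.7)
The plan is to exploit the fact that the linearization decouples dramatically at the trivial steady state, so the spectrum of $\A_* + \B_*$ can be computed explicitly via Remark~\ref{R2}.

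First I would substitute $S_* = 0$ and $I_* = 0$ into the quantities from~\eqref{STAR}. Since $S_* \equiv 0$, we get $q_* = \int_0^{a_m} b(a,\cdot)\,I_*(a,\cdot)\,\rd a = 0$ and $P_* = 0$. Combined with the standing hypothesis~\eqref{Aa} that $r \equiv 0$ (so $N = 0$), the operator matrix in~\eqref{C1}--\eqref{C2} collapses to a block-diagonal form with $A_1^* = \Delta_\mathcal{B} + \kappa_1$ on the first component, $-\partial_a + A(a)$ with $A(a) = d(a)\Delta_\mathcal{B} - m(a,\cdot)$ on the second, and the coupling boundary condition degenerating to $I(t,0) = 0$.

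Next, I would use the characterization in Remark~\ref{R2}: $\lambda \in \sigma(\A_* + \B_*)$ if and only if there is a nontrivial $(S,I) \in W_{p,\mathcal{B}}^2(\Omega) \times C(J, L_p(\Omega))$ with
\begin{equation*}
(\Delta_\mathcal{B} + \kappa_1)S = \lambda S, \qquad -\partial_a I + A(a) I = \lambda I, \qquad I(0) = 0.
\end{equation*}
Integrating the $I$-equation along characteristics using the evolution operator $U_A^\lambda$ forces $I(a) = U_A^\lambda(a,0) I(0) = 0$ for every $a \in J$. Consequently any spectral point arises solely from the $S$-equation, yielding
\begin{equation*}
\sigma(\A_* + \B_*) = \sigma(\Delta_\mathcal{B} + \kappa_1) = \{\kappa_1 - \mu_i : i \ge 0\}.
\end{equation*}

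Finally, since the sequence $(\mu_i)$ is nondecreasing, the spectral bound is $s(\A_* + \B_*) = \kappa_1 - \mu_0$, and all spectral points are real. Instability per Definition~\ref{DefStable} therefore occurs precisely when $\kappa_1 - \mu_0 > 0$, while all of $\sigma(\A_* + \B_*)$ lies in $\{\mathrm{Re}\,\lambda < 0\}$ precisely when $\kappa_1 < \mu_0$, establishing the proposition. There is essentially no obstacle here; the only subtlety to double-check is the justification that $I(0) = 0$ really kills the entire $I$-component in the mild sense of~\eqref{psi1}, which is immediate from the variation-of-constants formula for $U_A^\lambda$.
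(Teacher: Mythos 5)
Your proposal is correct and follows essentially the same route as the paper: with $S_*=I_*=0$ one has $q_*=P_*=0$, the boundary condition degenerates to $I(0)=0$, the mild formulation forces $I\equiv 0$, and the spectrum reduces to that of $\Delta_\mathcal{B}+\kappa_1$, giving spectral bound $\kappa_1-\mu_0$. No gaps.
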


\begin{proof}
Using  the notation introduced in~\eqref{STAR} for $(S_*,I_*)=(0,0)$, we have
$$
q_*=0\,,\qquad P_*=0\,,\qquad A_1^*=\Delta_\mathcal{B}+\kappa_1\,,
$$
so that Remark~\ref{R2} leads to investigating the eigenvalue problem
\begin{align*} 
\left(\begin{matrix} \Delta_\mathcal{B}+\kappa_1  & 0 \\ 0& -\partial_a+A(a)\end{matrix}\right)\left(\begin{matrix} S \\ I \end{matrix}\right) =\lambda \left(\begin{matrix} S \\ I \end{matrix}\right)\,,\qquad I(0)=0\,,
\end{align*}
for a  nontrivial $(S,I)\in W_{p,\mathcal{B}}^{2}(\Omega)\times C(J,L_p(\Omega))$; that is,
\begin{align*}
-\Delta_\mathcal{B}S&=(\kappa_1-\lambda) S\,,   \\ 
\partial_aI(a)&=\big(-\lambda+A(a)\big)I(a) \,,\quad a\in J\,,\qquad I(0)=0\,,
\end{align*}
with mild solution $I$. Hence $I=0$ so that $S\not=0$ and thus $\kappa_1-\lambda\in \sigma(-\Delta_\mathcal{B})$.
Consequently, $s(\A_*)=\kappa_1-\mu_0$ is an eigenvalue. 
\end{proof}

\subsection{The Disease-Free Steady State}

The existence of a disease-free steady state $(S_*,I_*)=( \tilde S_*,0)$ reduces to finding a positive non-trivial solution $ \tilde S_*\in W_{p,\mathcal{B}}^2(\Omega)$ to the semilinear equation
\begin{equation}\label{ST}
-\Delta_\mathcal{B} \tilde S_*+\frac{\kappa_1}{\kappa_2}  \tilde S_*^2=\kappa_1  \tilde S_*\,.
\end{equation}
Clearly, in the Neumann case $\delta=1$, the positive (constant) solution to \eqref{ST} is $\tilde S_*=\kappa_2$. \\

To continue let us recall the following result from \cite[Theorem~12,~Theorem~16]{AmannMaxPrinc}:

\begin{lem}\label{Amann}
Let $q\in L_\infty(\Omega)$. Then the eigenvalue problem
\begin{equation*}\label{EVP}
-\Delta_\mathcal{B} u+q u=\lambda u
\end{equation*}
has a smallest eigenvalue $\lambda=\lambda_0(q)\in \R$ (in the sense that $\mathrm{Re}\, \lambda>\lambda_0(q)$ for every other eigenvalue~$\lambda$). This principal eigenvalue is simple and the only eigenvalue with a positive eigenfunction~$u_0$, i.e. $u_0\in W_{p,\mathcal{B}}^2(\Omega)$ for every $p\in (1,\infty)$ and $u_0>0$ in $\Omega$. Moreover, if $q_1,q_2\in L_\infty(\Omega)$ with $q_1\le q_2$ and $q_1\not\equiv q_2$, then $\lambda_0(q_1)<\lambda_0( q_2)$. In fact, $\lambda_0(0)=\mu_0$.
\end{lem}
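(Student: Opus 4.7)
The plan is to deduce this classical principal-eigenvalue statement from the Krein--Rutman theorem applied to a shifted resolvent of $-\Delta_\mathcal{B}+q$. First I would fix $q\in L_\infty(\Omega)$ and choose a constant $C>\|q\|_{L_\infty(\Omega)}$ so that $-\Delta_\mathcal{B}+q+C$ is invertible on $L_p(\Omega)$ with a positive inverse; positivity follows from the weak maximum principle, valid in both the Dirichlet and Neumann cases. Setting $K_q:=(-\Delta_\mathcal{B}+q+C)^{-1}\in\ml(L_p(\Omega),W_{p,\mathcal{B}}^2(\Omega))$, the compact Rellich--Kondrachov embedding $W_{p,\mathcal{B}}^2(\Omega)\hookrightarrow L_p(\Omega)$ makes $K_q$ a compact positive operator on $L_p(\Omega)$. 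The strong maximum principle, combined with Hopf's lemma in the Neumann case, upgrades this to strong positivity: $K_q$ is irreducible on the positive cone $L_p^+(\Omega)$.

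Next, the Krein--Rutman theorem yields that the spectral radius $r(K_q)>0$ is a simple eigenvalue of $K_q$ with a quasi-interior positive eigenvector $u_0$, and is the only eigenvalue of $K_q$ with a positive eigenfunction. Setting $\lambda_0(q):=r(K_q)^{-1}-C$ and observing that any eigenvalue $\lambda$ of $-\Delta_\mathcal{B}+q$ corresponds to an eigenvalue $(\lambda+C)^{-1}$ of $K_q$ of modulus at most $r(K_q)$, we conclude $\mathrm{Re}\,\lambda\ge \lambda_0(q)$, with strict inequality unless $\lambda=\lambda_0(q)$. A standard elliptic bootstrap on $(-\Delta_\mathcal{B}+q)u_0=\lambda_0(q)u_0$ produces $u_0\in W_{p,\mathcal{B}}^2(\Omega)$ for every $p\in(1,\infty)$, and the strong positivity gives $u_0>0$ in $\Omega$.

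For monotonicity, let $u_i>0$ denote the principal eigenfunction associated with $q_i$ for $i=1,2$. Multiplying the eigenvalue equation for $u_1$ by $u_2$ and the one for $u_2$ by $u_1$, integrating by parts using the common boundary conditions, and subtracting yields
\[
\big(\lambda_0(q_2)-\lambda_0(q_1)\big)\int_\Omega u_1\,u_2\,\rd x=\int_\Omega (q_2-q_1)\,u_1\,u_2\,\rd x,
\]
which is $\ge 0$ under the assumption $q_1\le q_2$, and strictly positive whenever $q_1\not\equiv q_2$, since $u_1u_2>0$ a.e.\ in $\Omega$. This forces $\lambda_0(q_1)<\lambda_0(q_2)$. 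Finally, $\lambda_0(0)=\mu_0$ is immediate: by definition $\mu_0$ is the smallest eigenvalue of $-\Delta_\mathcal{B}$, and the preceding construction (with $q=0$) identifies $\lambda_0(0)$ with exactly this number.

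The main technical obstacle is to verify that the maximum-principle argument producing strong positivity of $K_q$ runs uniformly in the two boundary cases $\delta\in\{0,1\}$: the Dirichlet case is classical, whereas the Neumann case requires Hopf's lemma to conclude that a nonnegative, non-identically-zero solution is strictly positive up to the boundary. Once this positivity is in hand the rest is Krein--Rutman together with standard elliptic regularity, which is precisely why the statement is invoked as a citation from \cite{AmannMaxPrinc} rather than re-proved here.
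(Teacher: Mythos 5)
The paper offers no proof of this lemma: it is imported verbatim as a citation of \cite[Theorems~12 and~16]{AmannMaxPrinc}, so the only meaningful comparison is with the standard argument behind that citation. Your strategy -- shifted resolvent $K_q=(-\Delta_\mathcal{B}+q+C)^{-1}$, Krein--Rutman for the compact irreducible positive operator $K_q$, and the symmetric two-eigenfunction identity for the monotonicity -- is exactly that standard route, and most of it is sound: compactness and irreducibility of $K_q$, the identification of $\lambda_0(q)$ as the unique eigenvalue with a positive eigenfunction, its simplicity, the monotonicity computation (which is valid here precisely because the operator has no drift term, so the boundary terms in Green's identity cancel), and $\lambda_0(0)=\mu_0$.

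There is, however, one genuine gap: the step ``$|(\lambda+C)^{-1}|\le r(K_q)$, hence $\mathrm{Re}\,\lambda\ge\lambda_0(q)$.'' The modulus bound only says $|\lambda+C|\ge\lambda_0(q)+C$, i.e.\ that $\lambda$ lies outside a disk centred at $-C$; the exterior of that disk contains many points with $\mathrm{Re}\,\lambda<\lambda_0(q)$ (take $\lambda+C$ purely imaginary and large, so that $\mathrm{Re}\,\lambda=-C$). So the half-plane statement does not follow from Krein--Rutman applied to a single real resolvent value. You need an additional input. The cheapest one in the present setting is self-adjointness: $-\Delta_\mathcal{B}+q$ with real $q\in L_\infty(\Omega)$ is self-adjoint and bounded below on $L_2(\Omega)$ with compact resolvent, and its $L_p$-eigenvalues coincide with the $L_2$-ones by elliptic regularity; hence every eigenvalue is real with $\lambda+C>0$, and then $|\lambda+C|\ge\lambda_0(q)+C$ does give $\lambda\ge\lambda_0(q)$, with equality only for $\lambda=\lambda_0(q)$ by simplicity. (Amann's theorems cover non-self-adjoint operators, for which one instead uses that the operator generates a positive, irreducible, immediately compact semigroup whose spectral bound is a strictly dominant eigenvalue.) Relatedly, your claim of strict inequality ``unless $\lambda=\lambda_0(q)$'' does not follow from Krein--Rutman alone either: for an irreducible compact positive operator the peripheral spectrum need not reduce to $\{r(K_q)\}$, so without reality of the spectrum (or the semigroup argument) the dominance could a priori fail to be strict. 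With either of these repairs the proof closes; the rest of your write-up stands.
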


Now, if $\tilde S_*\in W_{p,\mathcal{B}}^2(\Omega) $ is a positive non-trivial solution to~\eqref{ST}, then necessarily
\begin{equation}\label{k1}
\kappa_1=\lambda_0\left(\frac{\kappa_1 \tilde S_*}{\kappa_2} \right)>\lambda_0(0)=\mu_0\,.
\end{equation}
Conversely, it  follows from \cite{BlatBrown} that if $\kappa_1>\lambda_0(0)=\mu_0$, then~\eqref{ST} admits a positive solution $\tilde S_*\in W_{p,\mathcal{B}}^2(\Omega)$ (see also~\cite{WalkerCrelle}). This solution is unique. Indeed, if there was another positive solution $\hat S_*\in W_{p,\mathcal{B}}^2(\Omega)$ to~\eqref{ST}, then $z:=\tilde S_*-\hat S_*$ solves the eigenvalue equation
\begin{equation*}
-\Delta_\mathcal{B} z+\frac{\kappa_1}{\kappa_2}(\tilde S_*+\hat S_*) z=\kappa_1 z
\end{equation*}
so that
$$
\kappa_1\ge \lambda_0\left(\frac{\kappa_1}{\kappa_2}(\tilde S_*+\hat S_*)\right)\,.
$$
According to \eqref{k1} and the monotonicity of $\lambda_0(q)$ with respect to $q$ this yields the contradiction
$$
\kappa_1=\lambda_0\left(\frac{\kappa_1 \tilde S_*}{\kappa_2} \right)<\lambda_0\left(\frac{\kappa_1}{\kappa_2}(\tilde S_*+\hat S_*)\right)\le \kappa_1\,.
$$
The linear stability of the disease-free steady state $(S_*,I_*)=(\tilde S_*,0)$ is then determined from the value of
\begin{subequations}\label{RRR000}
\begin{equation}\label{R0}
\mathsf{R}_0:=r(\tilde S_*Q^0)>0\,,
\end{equation}
where  the family of compact operators
\begin{equation}\label{Qs}
Q^\lambda=\int_0^{a_m} b(a)\, e^{-\lambda a}\, U_{A}(a,0)\,\rd a\in\mathcal{K}(L_p(\Omega))\,,\quad\lambda\in \C\,,
\end{equation}
\end{subequations}
was introduced in~\eqref{Q} and properties of the spectral radius $r(\tilde S_*Q^\lambda)$ are stated in~Lemma~\ref{L0} for~$\lambda\in\R$.

\begin{prop}\label{P100x}
There is a disease-free steady state $(S_*,I_*)=(\tilde S_*,0)$ with a smooth function $\tilde S_*>0$ if and only if $\kappa_1>\mu_0$.  In this case, $\tilde S_*$ is unique, \eqref{k1} holds, and $\mathsf{R}_0>0$ in \eqref{R0} is well-defined. For Neumann boundary conditions (i.e. $\delta=1$), we have $\tilde S_*=\kappa_2$.

Moreover, $(S_*,I_*)=(\tilde S_*,0)$ is linearly stable in \mbox{$L_p(\Omega)\times L_1(J,L_p(\Omega))$} if $\mathsf{R}_0<1$ and linearly unstable if~$\mathsf{R}_0>1$.
\end{prop}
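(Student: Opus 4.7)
The existence/uniqueness claim is essentially already contained in the discussion preceding the statement. My plan is to collect those arguments: the necessity of $\kappa_1>\mu_0$ follows from the fact that any positive solution $\tilde S_*$ of~\eqref{ST} satisfies $\kappa_1=\lambda_0(\kappa_1\tilde S_*/\kappa_2)>\lambda_0(0)=\mu_0$ by the strict monotonicity in Lemma~\ref{Amann}; the sufficiency and uniqueness are then taken from~\cite{BlatBrown} together with the strict monotonicity argument already written out. The Neumann case is immediate since the constant $\tilde S_*=\kappa_2$ solves~\eqref{ST}, and uniqueness rules out any other positive solution. The formula $\mathsf{R}_0=r(\tilde S_*Q^0)$ is then well defined by Lemma~\ref{L0}.

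For the stability analysis I will use Remark~\ref{R2}. Since $I_*\equiv 0$ we have $q_*=0$ and (by~\eqref{Aa}) $N=0$, so the eigenvalue system reduces to $(A_1^*-\lambda)S=P_*I$ together with $\partial_aI=(-\lambda+A(a))I$ subject to $I(0)=P_*I=\tilde S_*\int_0^{a_m}b(a)I(a)\,\rd a$. Solving the age equation by characteristics gives $I(a)=U_A^\lambda(a,0)I(0)$, and plugging this into the boundary condition yields the dichotomy: either $I\equiv 0$ and then $\lambda\in\sigma(A_1^*)$, or $I(0)\neq 0$ is an eigenvector of $\tilde S_*Q^\lambda$ for the eigenvalue $1$. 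In the latter case $\lambda\notin\sigma(A_1^*)$ (shown below), so $S=(\lambda-A_1^*)^{-1}P_*I$ is recovered, and the two cases indeed exhaust $\sigma(\A_*+\B_*)$. To dispose of the first case I observe that $A_1^*=\Delta_{\mathcal{B}}+\kappa_1-2\kappa_1\tilde S_*/\kappa_2$, so $\sigma(A_1^*)=\kappa_1-\sigma(-\Delta_{\mathcal{B}}+2\kappa_1\tilde S_*/\kappa_2)$; using~\eqref{k1} and the strict monotonicity of $\lambda_0$ from Lemma~\ref{Amann}, $\lambda_0(2\kappa_1\tilde S_*/\kappa_2)>\lambda_0(\kappa_1\tilde S_*/\kappa_2)=\kappa_1$, hence $\mathrm{Re}\,\sigma(A_1^*)\le \kappa_1-\lambda_0(2\kappa_1\tilde S_*/\kappa_2)<0$.

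It remains to analyze the values of $\lambda$ for which $1\in\sigma(\tilde S_*Q^\lambda)$. If $\mathsf{R}_0>1$, then by Lemma~\ref{L0} the continuous strictly decreasing function $\lambda\mapsto r(\tilde S_*Q^\lambda)$ takes the value $1$ at some real $\lambda_*>0$, and Krein--Rutman guarantees that this spectral radius is an eigenvalue; hence $\lambda_*\in\sigma(\A_*+\B_*)$, proving linear instability. If $\mathsf{R}_0<1$, then one must rule out complex $\lambda$ with $\mathrm{Re}\,\lambda\ge 0$ and $1\in\sigma(\tilde S_*Q^\lambda)$. This is the main obstacle, and I will handle it by a positivity/domination argument: since $|e^{-\lambda a}|=e^{-a\mathrm{Re}\,\lambda}$ and $\tilde S_*$, $b$, $U_A(a,0)$ are positive operators on the Banach lattice $L_p(\Omega)$, one has the pointwise modulus estimate
\[
\bigl|(\tilde S_*Q^\lambda)u\bigr|\le(\tilde S_*Q^{\mathrm{Re}\,\lambda})|u|\,,\qquad u\in L_p(\Omega)\,,
\]
which, together with standard lattice arguments for the spectral radius of positive operators, yields $r(\tilde S_*Q^\lambda)\le r(\tilde S_*Q^{\mathrm{Re}\,\lambda})\le r(\tilde S_*Q^0)=\mathsf{R}_0<1$ whenever $\mathrm{Re}\,\lambda\ge 0$. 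Consequently $1\notin\sigma(\tilde S_*Q^\lambda)$ for any such $\lambda$, so combining with the result for case~(a), $\mathrm{Re}\,\sigma(\A_*+\B_*)<0$ and the disease-free state is linearly stable. The delicate point is really only the complex spectral radius comparison, which I expect to cite from the Banach-lattice spectral theory literature rather than prove from scratch.
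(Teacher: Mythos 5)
Your proposal is correct, and for the existence/uniqueness part, the reduction via Remark~\ref{R2}, the dichotomy $I(0)=0$ versus $I(0)\neq 0$, the bound $\mathrm{Re}\,\sigma(A_1^*)\le\kappa_1-\lambda_0(2\kappa_1\tilde S_*/\kappa_2)<0$, and the instability argument for $\mathsf{R}_0>1$, it coincides with the paper's proof. The genuine difference is in the case $I(0)\neq 0$ when $\mathsf{R}_0<1$: the paper observes that such a $\lambda$ lies in the spectrum of the generator $\mathbb{A}$ of the decoupled age-structured semigroup on $L_1(J,L_p(\Omega))$ and then invokes \cite[Theorem~2.3~(b), Proposition~5.2]{WalkerIUMJ}, by which the spectral bound of $\mathbb{A}$ is the unique real root $s$ of $r(\tilde S_*Q^{s})=1$, so that $\mathsf{R}_0<1$ forces $s<0$ and hence $\mathrm{Re}\,\lambda\le s<0$. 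You instead rule out $1\in\sigma(\tilde S_*Q^\lambda)$ for $\mathrm{Re}\,\lambda\ge 0$ directly via the modulus estimate $|(\tilde S_*Q^\lambda)u|\le(\tilde S_*Q^{\mathrm{Re}\,\lambda})|u|$ and the domination principle $r(\tilde S_*Q^\lambda)\le r(\tilde S_*Q^{\mathrm{Re}\,\lambda})\le\mathsf{R}_0<1$, the last inequality by the monotonicity in Lemma~\ref{L0}. This is a valid and in fact more self-contained route: in $L_p(\Omega)$ the domination of spectral radii is elementary ($|(\tilde S_*Q^\lambda)^n u|\le(\tilde S_*Q^{\mathrm{Re}\,\lambda})^n|u|$ by induction and positivity, whence $\|(\tilde S_*Q^\lambda)^n\|\le\|(\tilde S_*Q^{\mathrm{Re}\,\lambda})^n\|$), so no external citation is really needed, whereas the paper's route leans on the semigroup machinery of~\cite{WalkerIUMJ}; the trade-off is that the paper's argument generalizes more readily to settings where the spectrum of the age-structured part is not accessed through the operators $Q^\lambda$ alone. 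Two cosmetic points: your formula $S=(\lambda-A_1^*)^{-1}P_*I$ has the wrong sign (the eigenvalue equation gives $S=-(\lambda-A_1^*)^{-1}P_*I$), which is immaterial for nontriviality; and the exhaustion of $\sigma(\A_*+\B_*)$ by the two cases should be justified by Corollary~\ref{C43} (pure point spectrum), which you use implicitly through Remark~\ref{R2}.
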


\begin{proof}
We have already shown that  there is a (unique) disease-free steady state $(S_*,I_*)=(\tilde S_*,0)$ with $\tilde S_*>0$ (satisfying \eqref{k1}) if and only if $\kappa_1>\mu_0$. Thus, let $\kappa_1>\mu_0$. According to Remark~\ref{R2} we have to check the real parts of solutions $\lambda$ to the eigenvalue problem
\begin{align*} 
\left(\begin{matrix} A_1^*  & -P_* \\ 0& -\partial_a+A(a) \end{matrix}\right)\left(\begin{matrix} S \\ I \end{matrix}\right) =\lambda \left(\begin{matrix} S \\ I \end{matrix}\right)\,,\qquad I(0)=P_*I\,,
\end{align*}
with a  nontrivial $(S,I)\in W_{p,\mathcal{B}}^{2}(\Omega)\times C(J,L_p(\Omega))$, where, due to~\eqref{STAR},
$$
q_*=0\,,\quad P_*I=\tilde S_*\int_0^{a_m}b(a,\cdot)I(a,\cdot)\,\rd a\,,\quad A_1^*=\Delta_\mathcal{B}+\kappa_1-\frac{2\kappa_1 \tilde S_*}{\kappa_2}\,,\quad A(a)=d(a)\Delta_\mathcal{B}-m(a,\cdot)\,.
$$
That is, we have to investigate
\begin{align}
-\Delta_\mathcal{B}S+\frac{2\kappa_1 \tilde S_*}{\kappa_2} S&=(\kappa_1-\lambda)S-\tilde S_*Q^\lambda I(0)\,,  \label{L1} \\ 
\partial_aI&=(-\lambda+A(a))I\,,\quad a\in J\,,\qquad I(0)=\tilde S_*\int_0^{a_m}b(a,\cdot)I(a,\cdot)\,\rd a \,, \label{L2}
\end{align}
where \eqref{L2} entails
\begin{align}\label{L2xxxxx}
I(a)=e^{-\lambda a}U_A(a,0)I(0)\,,\quad a\in J\,,\qquad I(0)=\tilde S_*Q^\lambda I(0)\,.
\end{align}
Assume first that  $\mathsf{R}_0<1$. Then either $I(0)=0$ so that \eqref{L1}, the monotonicity of the principal eigenvalue, and~\eqref{k1} imply
$$\mathrm{Re}\,(\kappa_1-\lambda)\ge \lambda_0\left(\frac{2\kappa_1 \tilde S_*}{\kappa_2}\right)>
\lambda_0\left(\frac{\kappa_1 \tilde S_*}{\kappa_2}\right)=\kappa_1\,,
$$ 
hence $\mathrm{Re}\,\lambda<0$.
Or $I(0)\not=0$ so that~\eqref{L2xxxxx} together with \cite[Theorem~2.3~(b)]{WalkerIUMJ}  imply that $\lambda\in \sigma(\mathbb{A})$, where $\mathbb{A}$ is the generator of a strongly continuous, positive, eventually compact semigroup on $L_1(J,L_p(\Omega))$. Its spectral bound  $s:=s(\mathbb{A})$ is the unique real number with $r(\tilde S_*Q^s)=1$ according to \cite[Proposition 5.2]{WalkerIUMJ}.  Since $\mathsf{R}_0=r(\tilde  S_*Q^0)<1$, it follows from Lemma~\ref{L0} that $s=s(\mathbb{A})<0$ and hence again $\mathrm{Re}\,\lambda<0$. Consequently, if $\mathsf{R}_0<1$, then $(S_*,I_*)=(\tilde S_*,0)$ is linearly stable.

Conversely, if $\mathsf{R}_0=r(\tilde S_*Q^0) >1$, then there is $\lambda>0$ such that $r(\tilde S_*Q^\lambda)=1$ by Lemma~\ref{L0} and there is a nontrivial $I(0)\in W_{p,\mathcal{B}}^1(\Omega)$ with $(1-\tilde S_*Q^\lambda)I(0)=0$. Then 
$$
I(a):= e^{-\lambda a}U_A(a,0)I(0)\,,\quad a\in J\,,
$$ 
satisfies~\eqref{L2}. Moreover, owing to~\eqref{k1}, we have
$$
\kappa_1-\lambda<\kappa_1 =
\lambda_0\left(\frac{\kappa_1 \tilde S_*}{\kappa_2}\right)<\lambda_0\left(\frac{2\kappa_1 \tilde S_*}{\kappa_2}\right)\,,
$$
and thus $\kappa_1-\lambda$ belongs to the resolvent set of $-\Delta_\mathcal{B}+2\kappa_1 \tilde S_*/\kappa_2$. Hence, 
$$
S:=\left( -\Delta_\mathcal{B}+\frac{2\kappa_1 \tilde S_*}{\kappa_2}-\kappa_1+\lambda\right)^{-1}  I(0)\in W_{p,\mathcal{B}}^2(\Omega)
$$
is  a nontrivial solution to~\eqref{L1}. That is, $\lambda >0$ is an eigenvalue
 and the disease-free steady state $(S_*,I_*)=(\tilde S_*,0)$ is thus linearly unstable.
\end{proof}

\subsection{Non-Existence of Endemic Steady States for $\mathsf{R}_0\le 1$}

An endemic steady state $(S_*,I_*)$ is a steady state to~\eqref{E} with $S_*, I_*\ge 0$ and $I_*\not\equiv 0$.
Note that, setting $I_0:=I_*(0)$, this is equivalent to finding a positive $(S_*,I_0)\in W_{p,\mathcal{B}}^2(\Omega)\times W_{p,\mathcal{B}}^1(\Omega)$ with $I_0\not=0$ satisfying
\begin{subequations}\label{SE}
\begin{align}
-\Delta_\mathcal{B}S_*+ Q^0 I_0S_*+\frac{\kappa_1}{\kappa_2}S_*^2&=\kappa_1 S_*\,,  \label{L1yxy} \\ 
I_0&=S_*Q^0 I_0\,. \label{L2yxy}
\end{align}
\end{subequations}
As shown next, $\mathsf{R}_0> 1$ is a necessary  condition for the existence of an endemic state.

\begin{lem}\label{X1}
Let $\kappa_1>\mu_0$ and let $\tilde S_*$ be as in Proposition~\ref{P100x}. If $\mathsf{R}_0=r(\tilde S_*Q^0)\le 1$, then there is no positive solution  $(S_*,I_0)\in W_{p,\mathcal{B}}^2(\Omega)\times W_{p,\mathcal{B}}^1(\Omega)$ to ~\eqref{SE}  with $I_0\not=0$.
\end{lem}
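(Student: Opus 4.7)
My plan is to derive a contradiction: assuming such an endemic pair $(S_*, I_0)$ exists, I will show this forces $\mathsf{R}_0 > 1$. The core idea is that the endemic profile $S_*$ must be \emph{strictly} smaller than the disease-free profile $\tilde S_*$ on $\Omega$, which propagates to a strict inequality between spectral radii $r(S_*Q^0)$ and $r(\tilde S_*Q^0)$. Combined with the identity $r(S_*Q^0)=1$ extracted from~\eqref{L2yxy}, this yields the contradiction.

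First I would establish $S_*>0$ in $\Omega$. Since $I_0\not\equiv 0$ and $I_0=S_*Q^0I_0$, the function $S_*$ cannot vanish identically; the strong maximum principle applied to~\eqref{L1yxy} (together with the Hopf lemma at $\partial\Omega$ in the Dirichlet case) then gives $S_*>0$ in $\Omega$. Consequently, $S_*Q^0\in\mathcal{K}(L_p(\Omega))$ is a compact, positive, irreducible operator, exactly as in Lemma~\ref{L0}. Then~\eqref{L2yxy} identifies $I_0$ as a positive eigenvector of $S_*Q^0$ with eigenvalue $1$, and Lemma~\ref{L0} forces $r(S_*Q^0)=1$ and $I_0$ quasi-interior in $L_p^+(\Omega)$.

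The main technical step is the strict comparison $S_*<\tilde S_*$ in $\Omega$. Setting $z:=S_*-\tilde S_*$ and subtracting~\eqref{ST} from~\eqref{L1yxy} gives
$$-\Delta_\mathcal{B} z + \left(\frac{\kappa_1}{\kappa_2}(S_*+\tilde S_*) - \kappa_1\right) z \,=\, -(Q^0 I_0)\,S_*\,,$$
whose right-hand side is non-positive and not identically zero (since $S_*>0$ and $Q^0I_0\not\equiv 0$). Strict monotonicity of the principal eigenvalue (Lemma~\ref{Amann}), combined with the identity $\kappa_1=\lambda_0(\kappa_1\tilde S_*/\kappa_2)$ from~\eqref{k1}, yields
$$\lambda_0\!\left(\frac{\kappa_1}{\kappa_2}(S_*+\tilde S_*) - \kappa_1\right) > \lambda_0\!\left(\frac{\kappa_1\tilde S_*}{\kappa_2}\right) - \kappa_1 = 0\,,$$
so the elliptic operator on the left-hand side is invertible with positive inverse, and the strong maximum principle forces $z<0$ in $\Omega$.

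The final step is quick: from $S_*<\tilde S_*$ and $Q^0I_0>0$ in $\Omega$ one obtains the strict pointwise inequality $\tilde S_*Q^0I_0 > S_*Q^0I_0 = I_0$. Pairing with a positive eigenfunctional $\varphi^*$ of the adjoint $(\tilde S_*Q^0)^*$ corresponding to the spectral radius $\mathsf{R}_0$ (again provided by Krein--Rutman, cf. Lemma~\ref{L0}) gives $\mathsf{R}_0\,\langle\varphi^*,I_0\rangle > \langle\varphi^*,I_0\rangle$, and since $\langle\varphi^*,I_0\rangle > 0$ one concludes $\mathsf{R}_0>1$, contradicting $\mathsf{R}_0\le 1$. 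I expect the main obstacle to be the strict comparison $S_*<\tilde S_*$: it is where the maximum principle must be orchestrated with the monotonicity of $\lambda_0$, and it is precisely there that the extremality of $\tilde S_*$ encoded in~\eqref{k1} is essential.
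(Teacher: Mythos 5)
Your proof is correct and follows essentially the same route as the paper: subtract \eqref{ST} from \eqref{L1yxy}, use the monotonicity of the principal eigenvalue $\lambda_0$ together with the maximum principle to conclude $S_*\le\tilde S_*$ (the paper gets non-strict, you get strict), and then derive the contradiction from a Krein--Rutman comparison of the spectral radii $r(S_*Q^0)$ and $r(\tilde S_*Q^0)$. Your final step via a positive eigenfunctional of $(\tilde S_*Q^0)^*$ is just the standard mechanism behind the strict monotonicity of the spectral radius that the paper invokes directly, so the two arguments coincide in substance.
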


\begin{proof}
Let  $\mathsf{R}_0=r(\tilde S_*Q^0)\le 1$ and assume for contradiction that there was a  positive solution  $(S_*,I_0)\in W_{p,\mathcal{B}}^2(\Omega)\times W_{p,\mathcal{B}}^1(\Omega)$ to ~\eqref{SE}  with $I_0\not=0$.
It follows from \eqref{ST} and \eqref{L1yxy} that $z:=S_*-\tilde S_*$ solves
$$
-\Delta_\mathcal{B}z+ Q^0 I_0z+\frac{\kappa_1}{\kappa_2}(S_*+\tilde S_*)z=\kappa_1 z- I_0\,.
$$
Moreover, we infer from \eqref{L1yxy} and Lemma~\ref{Amann} that
$$
0<\kappa_1=\lambda_0\left(Q^0 I_0+\frac{\kappa_1}{\kappa_2} S_*\right)<\lambda_0\left(Q^0 I_0+\frac{\kappa_1}{\kappa_2} (S_*+\tilde S_*)\right)\,.
$$
Hence, $\kappa_1$ belongs to the resolvent set of the operator 
$$
-\Delta_\mathcal{B}+ Q^0 I_0+\frac{\kappa_1}{\kappa_2}(S_*+\tilde S_*)
$$
and consequently, since $I_0>0$,
$$
z=-\left(\kappa_1-\Delta_\mathcal{B}z+ Q^0 I_0z+\frac{\kappa_1}{\kappa_2}(S_*+\tilde S_*)\right)^{-1}  I_0 \le 0\quad \text{in }\ \Omega\,,
$$
where we used the maximum principle from \cite[Theorem~13]{AmannMaxPrinc}. That is, $S_*\le \tilde S_*$ in $\Omega$ and thus $S_*Q^0\le \tilde S_*Q^0$. The Krein-Rutman Theorem now yields for the spectral radii 
$$
r(S_*Q^0)<r( \tilde S_*Q^0)\le 1\,,
$$
and therefore that the eigenvector equation $p=S_*Q^0p$ has no positive nontrivial solution in contradiction to $I_0>0$ solving~\eqref{L2yxy}.
\end{proof}

\begin{rems}\label{R5}
{\bf (a)} As noted in the proof of Lemma~\ref{X1},   necessary conditions for the existence of an endemic steady state $(S_*,I_*)$  with $S_*, I_*\ge 0$ and $I_*\not\equiv 0$ are (see \eqref{SE}) 
\begin{equation*}
\kappa_1=\lambda_0\left(Q^0 I_*(0)+\frac{\kappa_1}{\kappa_2} S_*\right) \,, \qquad 1=r\left(S_*Q^0\right) <r( \tilde S_*Q^0)=\mathsf{R}_0\,,\qquad S_*\le \tilde S_*\,,
\end{equation*}
where $(\tilde S_*,0)$ is the disease-free steady state.\vspace{1mm}

{\bf (b)} Whether the condition $\mathsf{R}_0>1$ is sufficient for the existence of an endemic steady state is, however, left open. In fact, one can show (see \cite{BlatBrown}) that for every $I_0\in L_p^+(\Omega)$ with $\lambda_0(Q^0I_0)<\kappa_1$ there is a unique solution $S_*=S_*(I_0)\in W_{p,\mathcal{B}}^2(\Omega)$ to~\eqref{L1yxy} with $S_*(I_0)>0$ depending  compactly and smoothly on $I_0$ (by the implicit function theorem), where  $ \tilde S_*=S_*(0)$. This then reduces problem~\eqref{SE} to finding a nontrivial positive fixed point of the smooth, compact operator~$F$ defined by $F(I_0):=S_*(I_0) Q^0 I_0$. Noticing that $DF(0)=\tilde S_* Q^0$ has a positive eigenvector associated with the eigenvalue $\mathsf{R}_0>1$ by the Krein-Rutman Theorem, it would remain to find $\rho>0$ such that $\lambda_0(Q^0I_0)<\kappa_1$ for $\|I_0\|_{L_p}\le \rho$ and $r(S_*(I_0)Q^0)<1$ when $\|I_0\|_{L_p}= \rho$. This then would allow one to apply the fixed point theorem  \cite[Theorem 13.2]{AmannSIAM} to derive the existence of a (unique) positive nontrivial solution $I_0=F(I_0)$ and thus an endemic steady state $(S_*(I_0),I_*)$ with $I_*(a):=U_A(a,0)I_0$.
However, it is open  whether this is indeed possible.

Nevertheless, when considering spatially homogeneous rates and Neumann boundary conditions, there exists a linearly stable endemic state if $\mathsf{R}_0>1$ as stated in Theorem~\ref{T2}.
\end{rems}
}

\section{Linearized Stability in a Particular Model: Proof of Theorem~\ref{T2}}\label{Sec5}

{  For Neumann boundary conditions $\delta=1$ and  spatially homogeneous rates \mbox{$m=m(a)$} and \mbox{$b=b(a)$} we can improve the results from the previous section. Thus assume~\eqref{Aa},~\eqref{AA1}, and $p>(2\vee n)$. Recall that the principal eigenvalue of the Laplacian $-\Delta_N$ subject to Neumann boundary conditions is $\mu_0=0$. We write $W_{p,N}^2(\Omega)$  in the following for the domain of $-\Delta_N$.
Since
\begin{align*}
A(a)=d(a)\Delta_N -m(a)\,,\quad a\in J\,,
\end{align*}
and $m$ is spatially homogeneous, the corresponding evolution operator is given by
\begin{align}\label{QQQ}
U_{A}(a,\sigma)=\exp\left(-\int_\sigma^a m(\tau)\,\rd \tau\right)\,\exp\left(\int_\sigma^a d(\tau)\,\rd \tau\, \Delta_N\right)\,,\quad 0\le \sigma\le a\le a_m\,.
\end{align}
In the previous section we showed that the trivial steady state $(S_*,I_*)=(0,0)$ is linearly unstable, and we have discussed the local linear stability of the disease-free steady state $(\tilde S_*,0)=(\kappa_2,0)$. We next investigate the latter's {\it global} stability.

\subsection{Global Stability of The Disease-Free Steady State $(S_*,I_*)=(\kappa_2,0)$}

We consider \mbox{$S_*=\tilde S_*=\kappa_2$}. Using \eqref{QQQ}, the operators $Q^\lambda$ from \eqref{Qs} become
$$
Q^\lambda= \int_0^{a_m} b(a)\, e^{-\lambda a}\,\Pi(a)\, \exp\left(\int_0^a d(\tau)\,\rd \tau\, \Delta_N\right)\,\rd a\,,\quad \lambda\in\C\,,
$$
where
$$
\Pi(a):=\exp\left(-\int_0^a m(\sigma)\,\rd \sigma\right)\,,\quad a\in J\,.
$$
Noticing 
\begin{align*}
\tilde S_*Q^0{\bf 1}&=\kappa_2\int_0^{a_m} b(a)\,\Pi(a) \,\rd a \, {\bf 1}\,,
\end{align*}
it readily follows from Lemma~\ref{L0} for the spectral radius (see \eqref{R00}) that
\begin{equation} \label{specrad}
\mathsf{R}_0=r(\tilde S_*Q^0)=\kappa_2\int_0^{a_m} b(a) \Pi(a)\, \rd a\,.
\end{equation} 
We have seen in Proposition~\ref{P100x} that $(S_*,I_*)=(\kappa_2,0)$ is  linearly stable when $\mathsf{R}_0 <1$ and linearly unstable when $\mathsf{R}_0 >1$. In the former case, we can prove now its global stability. That is,  if $\mathsf{R}_0 <1$, then any solution to~\eqref{E} subject to positive initial values $(S_0,I_0)$ converges to $(S_*,I_*)=(\kappa_2,0)$.
} 

\begin{prop}\label{P100}
Assume~\eqref{Aa} and \eqref{AA1}. Let $\mathsf{R}_0 <1$. Let $(S_0,I_0)\in L_p^+(\Omega)\times L_1^+(J, L_p\Omega))$ and let $(S,I)$ be the corresponding positive global solution to~\eqref{E} provided by Theorem~\ref{T1}. Then
$$
\lim_{t\to\infty} (S(t),I(t))=(\kappa_2,0)\ \text{ in }\ L_p(\Omega)\times L_1(J, C(\bar\Omega))\,.
$$
\end{prop}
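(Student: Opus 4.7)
The plan is to split the proof into three blocks: an asymptotic upper bound on $S$, a decay estimate for $I$ based on linearization around $(\tilde S,0)$ with $\tilde S$ slightly above $\kappa_2$, and a sandwich-type argument for the convergence $S\to\kappa_2$.

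First, since $I\ge 0$ and Neumann boundary conditions are in force, the constant-in-$x$ function $v(t)$ solving the logistic ODE $\dot v=\kappa_1(1-v/\kappa_2)v$ with $v(0)=\|S_0\|_{L_\infty(\Omega)}$ is a supersolution of \eqref{E3}. The parabolic comparison principle gives $0\le S(t,x)\le v(t)$ for all $(t,x)$, and since $v(t)\to\kappa_2$, for each $\varepsilon>0$ there is $T_\varepsilon$ with $S(t,x)\le \kappa_2+\varepsilon$ on $[T_\varepsilon,\infty)\times\Omega$. Because $\mathsf{R}_0<1$ and $\lambda\mapsto r(\tilde S Q^\lambda)$ is continuous (Lemma~\ref{L0}), $\varepsilon$ can be chosen so that $\tilde S:=\kappa_2+\varepsilon$ still satisfies $\tilde S\int_0^{a_m}b(a)\Pi(a)\,\mathrm{d}a<1$.

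Next, for the decay of $I$, introduce the solution $J(t)=\mathcal{I}_{\tilde S}(t-T_\varepsilon)(0,I(T_\varepsilon,\cdot))$ of the linearization around $(\tilde S,0)$ given by Theorem~\ref{T0}, with the same data at $t=T_\varepsilon$. Setting $U:=J-I$ on $[T_\varepsilon,\infty)$, the identity
\begin{equation*}
\tilde S\!\int_0^{a_m}\! b(a)J(t,a)\,\mathrm{d}a-S(t)\!\int_0^{a_m}\! b(a)I(t,a)\,\mathrm{d}a=\tilde S\!\int_0^{a_m}\! b(a)U(t,a)\,\mathrm{d}a+(\tilde S-S(t))\!\int_0^{a_m}\! b(a)I(t,a)\,\mathrm{d}a
\end{equation*}
shows that $U$ solves the same linearized system but with a nonnegative source in the renewal condition. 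The positivity of the semigroup $(\mathcal{T}_{\tilde S}(t))_{t\ge 0}$ (Theorem~\ref{T0}(a)) together with Duhamel's formula for the Volterra equation \eqref{500} yields $U\ge 0$, hence $0\le I(t)\le J(t)$ for $t\ge T_\varepsilon$. Because $(\mathcal{T}_{\tilde S}(t))$ is eventually compact and its spectral bound equals the unique $\lambda\in\mathbb{R}$ with $r(\tilde S Q^\lambda)=1$ (argued exactly as in the proof of Proposition~\ref{P100x}), Lemma~\ref{L0} combined with $r(\tilde S Q^0)<1$ forces this spectral bound to be negative, so $J(t)$ and therefore $I(t)$ decays exponentially in $L_1(J,L_p(\Omega))$. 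To upgrade to convergence in $L_1(J,C(\bar\Omega))$ one uses the representation $I(t,a)=U_A(a,0)B(t-a)$ valid for $a<t$, the bound $\|B(t)\|_{L_p(\Omega)}\le \|S(t)\|_\infty\|b\|_\infty\|I(t)\|_{L_1(J,L_p(\Omega))}\to 0$, the smoothing estimate $\|U_A(a,0)\|_{\mathcal{L}(L_p,W_{p,N}^{2\theta})}\lesssim a^{-\theta}$ from \eqref{EST} for some $\theta\in(n/(2p),1)$, and the Sobolev embedding $W_{p,N}^{2\theta}(\Omega)\hookrightarrow C(\bar\Omega)$.

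Finally, for the convergence $S(t)\to\kappa_2$, fix $\eta>0$ arbitrarily small and use $\|h(t)\|_{L_\infty(\Omega)}\le\eta$ for $t$ large, where $h(t):=\int_0^{a_m}b(a)I(t,a,\cdot)\,\mathrm{d}a$. Then $S$ is a supersolution of the Neumann logistic problem $\partial_t w=\Delta w+w(\kappa_1-\eta-\kappa_1 w/\kappa_2)$. The strong parabolic maximum principle applied to \eqref{E3} guarantees $S(t,\cdot)>0$ pointwise (since $S_0\not\equiv 0$), and comparison with the constant solution $\kappa_2(\kappa_1-\eta)/\kappa_1$ of the perturbed logistic equation yields $\liminf_{t\to\infty}S(t,x)\ge \kappa_2(\kappa_1-\eta)/\kappa_1$ uniformly in $x$. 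Letting $\eta\to 0$ and combining with the $\limsup$ from the first step gives $S(t)\to\kappa_2$ uniformly on $\bar\Omega$, hence in $L_p(\Omega)$. The main technical delicacy lies in Step 2, namely making the comparison $I\le J$ rigorous at the level of mild solutions rather than classical ones; this is handled by writing $U=J-I$ in the Volterra form and exploiting the order-preserving property of the fixed-point iteration underlying \eqref{500}.
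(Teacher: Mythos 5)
Your proposal is correct and follows essentially the same route as the paper: a logistic supersolution giving $S\le\kappa_2+\varepsilon$ for large $t$ with $\varepsilon$ chosen so that $(\kappa_2+\varepsilon)\int_0^{a_m}b\,\Pi\,\rd a<1$, domination of $I$ by the linear renewal problem with birth rate $(\kappa_2+\varepsilon)b$ (the comparison being justified by the positivity of the Volterra resolvent $(1-\mathcal{K})^{-1}=\sum_k\mathcal{K}^k$), negativity of that problem's spectral bound because $\mathsf{R}_0<1$, and a perturbed-logistic subsolution for $\liminf_{t\to\infty}S$. The only cosmetic deviations are that you recover the $L_1(J,C(\bar\Omega))$-decay from the renewal representation $I(t,a)=U_A(a,0)B(t-a)$ plus parabolic smoothing instead of running the comparison semigroup on $L_1(J,W_{p,N}^{2\alpha}(\Omega))$ as the paper does, and that you obtain the lower bound on $S$ uniformly on $\bar\Omega$ via the Hopf lemma rather than the paper's localized ball argument followed by Lebesgue's theorem.
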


\begin{proof} Since solutions become immediately smooth according to Theorem~\ref{T1}, we may restrict without loss of generality to initial values
$$
S_0\in W_{p,N}^2(\Omega)\,,\quad I_0\in L_1(J, W_{p,N}^2(\Omega))\,,\qquad S_0, I_0\ge 0\,,\qquad S_0\not= 0\,.
$$

{\bf (i)} We first derive an upper bound on $S$. From \eqref{E3} we have
$$
\partial_t S(t,x)\le \Delta_N S(t,x)+\kappa_1\left(1-\dfrac{1}{\kappa_2}S(t,x)\right) S(t,x) 
$$
so that
\begin{equation}\label{Sbound}
S(t,x)\le z(t)\,,\quad (t,x)\in \R^+\times \Omega\,,
\end{equation}
by the comparison principle, where
$$
z(t):=\frac{\|S_0\|_\infty}{e^{-\kappa_1 t}\left(1-\frac{\kappa_1}{\kappa_2}\|S_0\|_\infty\right)+\frac{1}{\kappa_2}\|S_0\|_\infty}\,,\quad t\ge 0\,,
$$
is the solution to
$$
z'(t)=\kappa_1\big(1-\dfrac{1}{\kappa_2}z(t)\big) z(t)\,,\quad t\ge 0\,,\qquad z(0)=\|S_0\|_\infty\,.
$$
Due to $\mathsf{R}_0 <1$ and \eqref{specrad} we may choose $\ve_0>0$ such that
\begin{equation}\label{R11}
(\kappa_2+\ve_0)\int_0^{a_m} b(a)\, \Pi(a)\,\rd a <1\,.
\end{equation}
Since $\lim_{t\to\infty}z(t)=\kappa_2$, we find for fixed $\ve\in (0,\ve_0)$ some $t_0>0$ with 
\begin{align}\label{Se}
S(t,x)\le \kappa_2+\ve\,,\quad t\ge t_0\,,\quad x\in\Omega\,.
\end{align}

{\bf (ii)} Next, we derive an upper bound for $I$.  Using \eqref{Se} and \eqref{E1}-\eqref{E2} we obtain
\begin{align*}
D I(t+t_0,a)&=A(a)I(t+t_0,a)
\end{align*}
subject to
\begin{align*}
I(t+t_0,0)&\le (\kappa_2+\ve) \int_0^{a_m} b(a)\,I(t+t_0,a)\,\rd a\,, \quad t\ge 0\,.
\end{align*}
Let $G$ solve (see~\cite{WalkerIUMJ})
\begin{subequations}\label{p}
\begin{align} 
D G(t,a)&=A(a) G(t,a)\,, \quad t\ge 0\,,\quad \quad a\in J\,,
\\
G(t,0)&= (\kappa_2+\ve) \int_0^{a_m} b(a)\,G(t,a)\,\rd a\,, \qquad G(0,a)=I(t_0,a)\,.
\end{align}
\end{subequations}
We claim that  
\begin{align}\label{ww}
I(t+t_0,a)\le G(t,a)\,,\quad t\ge 0\,,\quad a\in J\,.
\end{align}  
Indeed, setting $w(t,a):=G(t,a)-I(t+t_0,a)$, we have, for $t\ge 0$ and $a\in J$,
\begin{align*}
D w(t,a)&=A(a) w(t,a) \,,  
\\
w(t,0)&\ge (\kappa_2+\ve) \int_0^{a_m} b(a)\,w(t,a)\,\rd a\,, \qquad w(0,a)=0\,,
\end{align*}
and therefore
\begin{align}\label{w}
w(t,a):=\left\{\begin{array}{ll}
0\,,& a>t\,, \ a\in J\,,\\[2pt]
U_A(a,0)\hat B(t-a) \,,&a\le t\,,\ a\in J\,.
\end{array} \right.
\end{align}
with
$$
\hat B(t)=w(t,0)\ge (\kappa_2+\ve) \int_0^{t} b(a)\,U_A(a,0)\,\hat B(t-a)\,\rd a\,,\quad t\ge 0\,.
$$
Introducing 
$$
(\mathcal{K}B)(t):= (\kappa_2+\ve) \int_0^{t} b(a)\,U_A(a,0)\,B(t-a)\,\rd a\,,\quad t\in [0,T]\,,\quad B\in C([0,T],L_p(\Omega))\,,
$$
it follows that $\mathcal{K}\in\ml\big(C([0,T],L_p(\Omega))\big)$ is a positive compact (Volterra) operator with spectral radius zero (see the proof of \cite[Lemma~5.1]{WalkerIUMJ}). Therefore, 
$$
(1-\mathcal{K}) ^{-1}=\sum_{k\ge 0}\mathcal{K}^k\ge 0
$$
and consequently
$$
\hat B=(1-\mathcal{K}) ^{-1} h\ge 0
$$
for
$$
h(t):=\hat B(t)- (\kappa_2+\ve) \int_0^{t} b(a)\,\hat B(t-a)\,\rd a\ge 0\,,\quad t\ge 0\,.
$$
Hence $w\ge 0$ according to \eqref{w} so that \eqref{ww} is true. \\

{\bf (iii)} Next, we claim that 
\begin{equation}\label{II}
I(t)\rightarrow 0 \ \text{ in }\ L_1(J,C(\bar\Omega))\ \text{ as }\ t\rightarrow \infty\,,
\end{equation}
which, according to \eqref{ww}, is ensured by showing that
\begin{align}\label{conv}
\lim_{t\to\infty} G(t)=0\ \text{ in }\ L_1(J, C(\bar\Omega))\,.
\end{align}
As for \eqref{conv} we fix $\alpha\in (n/2p,1)$ and  note that
problem~\eqref{p} with birth rate $(\kappa_2+\ve)b(a)$ fits exactly into the setting of problems investigated in \cite{WalkerIUMJ}. In fact, since $G(0)=I(t_0)\in L_1(J,W_{p,N}^{2\alpha}(\Omega))$, it follows from \cite[Corollary~1.3]{WalkerIUMJ} that $G(t)=e^{t\hat\A_\ve}G(0)$, $t\ge 0$, where $(e^{t\hat\A_\ve})_{t\ge 0}$ is an eventually compact, positive semigroup  on $L_1(J,W_{p,N}^{2\alpha}(\Omega))$. Therefore, \cite[V.~Corollary~3.2]{EngelNagel} ensures that the spectrum of the generator $\hat\A_\ve$ consists of eigenvalues only, while \cite[IV.~Corollary~3.12]{EngelNagel}  yields that the spectral bound of $\hat\A_\ve$ coincides with the type of the semigroup. In fact, the spectral bound  $s_0:=s(\hat\A_\ve)$ is the unique real number with $r(\hat Q_\ve^{s_0})=1$ according to \cite[Proposition 5.2]{WalkerIUMJ}, where
$$
\hat Q_\ve^\lambda:=(\kappa_2+\ve) \int_0^{a_m} b(a)\, U_A^\lambda(a,0)\,\rd a\,,\quad \lambda\in\C\,.
$$
Since as in~\eqref{specrad}
$$
r(\hat Q_\ve^{s_0})=(\kappa_2+\ve)\int_0^{a_m} b(a)\, \Pi(a) e^{-s_0a}\,\rd a \,,
$$
it follows from~\eqref{R11} that $s_0=s(\hat\A_\ve)<0$, and since the spectral bound and the type of the semigroup coincide, we conclude that
$$
\|G(t)\|_{L_1(J,W_{p,N}^{2\alpha}(\Omega))}\le N\, e^{-s_0 t}\,\|G(0)\|_{L_1(J,W_{p,N}^{2\alpha}(\Omega))}\longrightarrow 0
$$
as $t\to\infty$. Consequently, since $W_{p,N}^{2\alpha}(\Omega)\hookrightarrow C(\bar\Omega)$, we deduce~\eqref{conv} and therefore, owing to~\eqref{ww}, also~\eqref{II}.\\

{\bf (iv)} Finally, we prove that
$$
\lim_{t\to\infty} S(t)=\kappa_2 \ \text{ in }\ L_p(\Omega)\,.
$$
Given $\ve\in (0,\kappa_1)$ we infer from \eqref{II} that there is $t_1\ge t_0$ with
$$
\left\|\int_0^{a_m}b(a)I(t,a)\,\rd a\right\|_\infty\le \ve\,,\quad t\ge t_1\,,
$$
and hence, due to \eqref{E3},
\begin{align*}
\partial_t S(t,x)&\ge   \Delta_N S(t,x)+\kappa_1\left(1-\dfrac{1}{\kappa_2}S(t,x)\right) S(t,x)  -\ve S(t,x)\,,\quad t\ge t_1\,,\quad x\in\Omega\,.
\end{align*}
Since the strong maximum principle yields for every $x_0\in \Omega$ some $\varrho>0$ such that 
$$
\varrho_0:=\min_{\bar\B(x_0,\varrho)} S(t_1,\cdot)>0\,,
$$
 we obtain $S(t,x)\ge \xi(t)$ for $t\ge t_1$ and $x\in \bar\B(x_0,\varrho)$, where $\xi$ solves
$$
\xi'(t)=\kappa_1\left(1-\frac{\xi(t)}{\kappa_2}\right)\xi(t)-\ve \xi(t)\,,\quad t\ge t_1\,,\qquad \xi(t_1)=\xi_0\,.
$$ 
Therefore, 
$$
\liminf_{t\to\infty} S(t,x)\ge \lim_{t\to\infty}\xi(t)=\frac{\kappa_2(\kappa_1-\ve)}{\kappa_1}\,,\quad x\in \Omega\,.
$$
Letting $\ve\to 0$ and invoking \eqref{Se}, we derive
$$
\lim_{t\to\infty} S(t,x)=\kappa_2\,,\quad x\in \Omega\,.
$$
Finally, using again the $L_\infty$-bound from \eqref{Se} and Lebesgue's theorem we conclude that
$S(t)\to \kappa_2$ in~$L_p(\Omega)$ as $t\to\infty$.
Together with \eqref{II}, this  proves Proposition~\ref{P100}.
\end{proof}



\subsection{The Endemic Steady State $(\bar S_*,\bar I_*)$}

In case that the basic reproduction number satisfies
\begin{align*}
\mathsf{R}_0=\kappa_2\int_0^{a_m} b(a) \Pi(a)\, \rd a >1\,,
\end{align*}
there is an  endemic steady state $(\bar S_*,\bar I_*)$ given by
$$
\bar S_*:=\frac{\kappa_2}{\mathsf{R}_0 }\,,\qquad \bar I_*(a):= \frac{1}{\mathsf{R}_0 } \kappa_1\kappa_2\left(1-\frac{1}{\mathsf{R}_0 }\right)\Pi(a)\,,\quad a\in J\,.
$$ 
It is convenient to set $\mathsf{r}_0 :=1/\mathsf{R}_0 \in (0,1)$. Then, the endemic steady state can be written as
$$
\bar S_*=\mathsf{r}_0 \kappa_2\,,\qquad 
\bar I_*(a)=\Pi(a)i_*\,,\quad a\in J\,,\qquad i_*:=\mathsf{r}_0 \kappa_1\kappa_2\left(1-\mathsf{r}_0 \right)\,.
$$
In \eqref{STAR} we have
$$
q_*=\int_0^{a_m} b(a) \bar I_*(a)\,\rd a=\kappa_1\left(1-\mathsf{r}_0 \right)
$$
so that
$$
A_1^*=\Delta_N-\kappa_1\mathsf{r}_0 
$$
and
$$ 
P_*I=\mathsf{r}_0 \kappa_2 \int_0^{a_m}b(a)I(a)\,\rd a\,.
$$
In the following we still assume $p>(2\vee n)$. 

\begin{prop}\label{Prop51}
Assume ~\eqref{Aa} and \eqref{AA1}. For $1<\mathsf{R}_0 <3$, the endemic steady state  $(\bar S_*,\bar I_*)$ to~\eqref{E} is linearly stable in $L_p(\Omega)\times L_1(J,L_p(\Omega))$.
\end{prop}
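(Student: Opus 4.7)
The plan is to reduce the eigenvalue problem for the generator $\A_*+\B_*$ to a countable family of scalar characteristic equations, one per Neumann eigenvalue of the Laplacian, and then to rule out roots with nonnegative real part by an elementary modulus estimate whose margin depends precisely on the threshold $\mathsf{R}_0<3$.

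First I would exploit that, under~\eqref{AA1}, every coefficient in the linearization depends only on the age variable~$a$ (apart from the constants $\mathsf{r}_0\kappa_2$, $\kappa_1\mathsf{r}_0$, and $q_*=\kappa_1(1-\mathsf{r}_0)$), together with Neumann boundary conditions. Hence the pieces $A_1^*=\Delta_N-\kappa_1\mathsf{r}_0$, $A(a)=d(a)\Delta_N-m(a)$, and $P_*I=\mathsf{r}_0\kappa_2\int_0^{a_m}b(a)I(a)\,\rd a$ all commute with each spectral projection of $-\Delta_N$ associated with an eigenvalue $\mu_k\ge 0$. Since $\A_*+\B_*$ has pure point spectrum with finite-dimensional eigenspaces (Corollary~\ref{C43}), any eigenvector must project nontrivially onto some Neumann mode $\phi_k$. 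Writing $S=s\phi_k$ and $I(a)=c(a)\phi_k$ on such a mode, the equation $\partial_a I=(A(a)-\lambda)I$ integrates to $c(a)=c(0)\,e^{-\lambda a}\Pi(a)\exp\bigl(-\mu_k\int_0^a d(\tau)\,\rd\tau\bigr)$, and inserting this into $(A_1^*-\lambda)S=P_*I$ and $I(0)=P_*I+q_*S$ yields a $2\times 2$ linear system for $(s,c(0))$. Setting its determinant to zero produces the characteristic equation
\begin{equation}\label{PlanChar}
\mu_k+\kappa_1\mathsf{r}_0+\lambda=g_k(\lambda)\bigl(\mu_k+\lambda+\kappa_1(2\mathsf{r}_0-1)\bigr),
\end{equation}
where $g_k(\lambda):=\mathsf{r}_0\kappa_2\int_0^{a_m}b(a)\,e^{-\lambda a}\,\Pi(a)\,\exp\bigl(-\mu_k\int_0^a d(\tau)\,\rd\tau\bigr)\,\rd a$.

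Next I would argue by contradiction: assume \eqref{PlanChar} holds for some $k\ge 0$ and some $\lambda$ with $\mathrm{Re}\,\lambda\ge 0$. Since $\mathrm{Re}\,\lambda\ge 0$ and $\mu_k\ge 0$, the integrand of $g_k(\lambda)$ is dominated in modulus by $b(a)\Pi(a)$, so
$$|g_k(\lambda)|\le \mathsf{r}_0\kappa_2\int_0^{a_m}b(a)\,\Pi(a)\,\rd a=\mathsf{r}_0\mathsf{R}_0=1.$$
Writing $w:=\mu_k+\lambda=x+iy$ with $x\ge 0$, a direct expansion yields
$$|w+\kappa_1\mathsf{r}_0|^2-|w+\kappa_1(2\mathsf{r}_0-1)|^2=\kappa_1(1-\mathsf{r}_0)\bigl[2x+\kappa_1(3\mathsf{r}_0-1)\bigr].$$
Under $1<\mathsf{R}_0<3$, i.e.\ $1/3<\mathsf{r}_0<1$, both bracketed factors are strictly positive, hence $|w+\kappa_1\mathsf{r}_0|>|w+\kappa_1(2\mathsf{r}_0-1)|$. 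Combined with $|g_k(\lambda)|\le 1$, the right-hand side of~\eqref{PlanChar} has strictly smaller modulus than the left, a contradiction. By Remark~\ref{R1a} the spectral bound coincides with the growth bound of~$\mS_*$, so $s(\A_*+\B_*)<0$ and the endemic steady state is linearly stable.

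The main obstacle will not be the algebra---the characteristic equation \eqref{PlanChar} and the squared-modulus identity are elementary---but rather the clean justification of the mode decomposition in the $L_p$--$L_1$ setting with $p\ne 2$, where one cannot simply invoke an $L_2$-orthonormal basis. The argument I have in mind is that the kernel of $\lambda-(\A_*+\B_*)$ is finite-dimensional (compact resolvent) and invariant under the commuting family of Neumann spectral projections; picking any $k$ for which the kernel projects nontrivially onto $\spann\{\phi_k\}$ then produces a nontrivial solution of the scalar system and forces~\eqref{PlanChar}. It is precisely the factor $3\mathsf{r}_0-1$ in the modulus identity that singles out $\mathsf{R}_0<3$ as the natural stability threshold.
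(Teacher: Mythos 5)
Your proposal is correct and follows essentially the same route as the paper: you arrive at the same characteristic equation (the paper states it in the reciprocal form $1/\mathcal{R}_{\lambda,\mu_j}=1-\tfrac{1-\mathsf{r}_0}{\zeta+\mathsf{r}_0}$ in \eqref{characteristicequation}, which is your relation with denominators cleared), the same bound $\vert\mathcal{R}_{\lambda,\mu_j}\vert\le 1$ for $\mathrm{Re}\,\lambda\ge 0$, and the same squared-modulus identity whose factor $3\mathsf{r}_0-1$ yields the threshold $\mathsf{R}_0<3$. Your concern about the mode decomposition for $p\neq 2$ is handled more simply in the paper: one does not need the eigenvector to be supported on a single Neumann mode, only the single vector identity \eqref{I11} for $I(0)$, and since $I(0)\in W_{p,N}^{1}(\Omega)\hookrightarrow W_{2,N}^{1}(\Omega)$ for $p>2$ one may expand $I(0)$ in the $W_{2,N}^{1}$-orthonormal basis of Neumann eigenfunctions and read off the scalar equation from any nonzero coefficient.
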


\begin{proof}
Let $\lambda$ be a spectral point of the linearization, so that, according to Remark~\ref{R2}, there is a  nontrivial $(S,I)\in W_{p,N}^{2}(\Omega)\times C(J,L_p(\Omega))$ with
\begin{align*} 
\left(\begin{matrix} A_1^*  & -P_* \\ 0& -\partial_a+A(a) \end{matrix}\right)\left(\begin{matrix} S \\ I \end{matrix}\right) =\lambda \left(\begin{matrix} S \\ I \end{matrix}\right)\,,\qquad I(0)=P_*I +q_*S\,.
\end{align*}
That is,
\begin{subequations}
\begin{align}
-\Delta_NS&=-(\lambda+\kappa_1\mathsf{r}_0 ) S-\mathsf{r}_0 \kappa_2\int_0^{a_m}b(a)I(a)\,\rd a\,,  \label{L1y} \\ 
\partial_aI(a)&=\big(-\lambda+A(a)\big)I(a) \,,\qquad a\in J\,,\label{L2yx}\\ I(0)&=\mathsf{r}_0 \kappa_2\int_0^{a_m}b(a)I(a)\,\rd a+\kappa_1\left(1-\mathsf{r}_0 \right)S\,. \label{L2y}
\end{align}
\end{subequations}
From \eqref{L2yx} we get
$$
I(a)=U_A^\lambda(a,0)I(0)\,,\quad a\in J\,,
$$
and plugged into \eqref{L2y} this yields
\begin{align}\label{I1}
I(0)=\bar S_*Q^\lambda I(0) +\kappa_1\left(1-\mathsf{r}_0 \right)S 
\end{align}
with 
\begin{align}\label{QQQQq}
\bar S_*Q^\lambda=\mathsf{r}_0 \kappa_2\int_0^{a_m} b(a) U_A^\lambda(a,0)\,\rd a\in \ml\big(L_p(\Omega),W_{p,N}^{1}(\Omega)\big)\,.
\end{align}
In order to verify that $\mathrm{Re}\,\lambda<0$, we assume for contradiction that $\mathrm{Re}\,\lambda\ge 0$. Then  $\lambda+\kappa_1\mathsf{r}_0 -\Delta_N$ is invertible and we infer from~\eqref{L1y} and~\eqref{I1} that
 \begin{align}\label{I11}
(1-\bar S_*Q^\lambda) I(0) =-\kappa_1\left(1-\mathsf{r}_0 \right)\big(\lambda+\kappa_1\mathsf{r}_0 -\Delta_N\big)^{-1}S_*Q^\lambda I(0)\,.
\end{align}
Recall that the eigenfunctions $(\phi_j)_{j\in\N}$ of the Neumann-Laplacian, corresponding to the eigenvalues counted according to multiplicity,
 build an orthonormal basis in $W_{2,N}^1(\Omega)$. Then $-\Delta_N\phi_j=\mu_j \phi$ entails $e^{t\Delta_N}\phi_j=e^{-t\mu_j}\phi_j$ for $t\ge 0$, and the operator~$\bar S_*Q^\lambda$ leaves the eigenfunctions invariant. More precisely, from~\eqref{QQQ} we deduce
$$
\bar S_*Q^\lambda\phi_j=\mathcal{R}_{\lambda,\mu_j}\phi_j\,,\qquad \mathcal{R}_{\lambda,\mu_j}:=\mathsf{r}_0 \kappa_2\int_0^{a_m} b(a)\,\Pi(a)\, e^{-\lambda a}\,\exp\left(-\mu_j\int_0^ad(\sigma)\,\rd \sigma\right)\,\rd a\,,
$$
for every $j\in \N$. Note that $I(0)\in W_{p,N}^1(\Omega)\hookrightarrow W_{2,N}^1(\Omega)$ is nonzero as otherwise also $S=0$. Hence, writing $I(0)=\sum_j \xi_j\phi_j$, we derive from the identity~\eqref{I11} that
$$
\left(1-\mathcal{R}_{\lambda,\mu_j}\right)\xi_j=-\frac{\kappa_1\left(1-\mathsf{r}_0 \right)}{\lambda+\kappa_1\mathsf{r}_0 +\mu_j} \mathcal{R}_{\lambda,\mu_j}\xi_j\,,\quad j\in \N\,.
$$
Taking any $j\in\N$ with $\xi_j\not=0$, the previous identity leads to the characteristic equation
 \begin{align}\label{characteristicequation}
\frac{1}{\mathcal{R}_{\lambda,\mu_j}}= 1-\frac{1-\mathsf{r}_0 }{\frac{\lambda+\mu_j}{\kappa_1}+\mathsf{r}_0 }\,.
\end{align}
Owing to $\mu_j\ge 0$  we have
 \begin{align}\label{characteristicequation2}
\vert \mathcal{R}_{\lambda,\mu_j}\vert\le \mathcal{R}_{\mathrm{Re}\,\lambda,0}\le \mathcal{R}_{0,0}=\mathsf{r}_0  \mathsf{R}_0 =1\,,\qquad \mathrm{Re}\,\lambda\ge 0\,.
\end{align}
Clearly,  \eqref{characteristicequation} has no real solution $\lambda\ge 0$ since in this case 
$$
0<\mathcal{R}_{\lambda,\mu_j}\le 1\,,\qquad \frac{1-\mathsf{r}_0 }{\frac{\lambda+\mu_j}{\kappa_1}+\mathsf{r}_0 }>0\,.
$$ 
For an arbitrary $\lambda\in\C$ with $\mathrm{Re}\,\lambda\ge 0$ we write
$$
\zeta:=\frac{\lambda+\mu_j}{\kappa_1}=\alpha+i\beta\,,\qquad \alpha\ge 0\,,\quad \beta\in\R\,,
$$
and obtain from \eqref{characteristicequation} and  \eqref{characteristicequation2} the contradiction
\begin{align*}
1&\le \frac{1}{\vert \mathcal{R}_{\lambda,\mu_j}\vert^2}= \left\vert 1-\frac{1-\mathsf{r}_0 }{\zeta +\mathsf{r}_0 }\right\vert^2
=\frac{\vert \zeta +2\mathsf{r}_0 -1\vert^2}{\vert\zeta +\mathsf{r}_0 \vert^2}=1+\frac{(\mathsf{r}_0 -1)(2\alpha+3\mathsf{r}_0 -1)}{(\alpha+\mathsf{r}_0 )^2+\beta^2}<1
\end{align*}
since $\mathsf{r}_0 -1<0$ and $2\alpha+3\mathsf{r}_0 -1\ge 3\mathsf{r}_0 -1>0$ by our assumption that $1/3<\mathsf{r}_0 <1$. Therefore, we conclude that indeed $\mathrm{Re}\,\lambda<0$. Consequently, the endemic steady state  $(\bar S_*,\bar I_*)$ is linearly stable when $1<\mathsf{R}_0 <3$.
\end{proof}

Clearly, one expects $(\bar S_*,\bar I_*)$ to be linearly stable for all $\mathsf{R}_0 >1$.

\section{APPENDIX} \label{Appendix}

\subsection*{Regularity of $I$} We provide the missing step from the proof of Proposition~\ref{P2}.

\begin{lem}\label{APL1}
 For $\ve>0$ small and some  $n/2p<2\theta<2-n/p$, let
$$
S_\ve\in  C\big([0, T_m-\ve),W_{p,\mathcal{B}}^{2}(\Omega)\big)\,,\quad I_\ve\in  C\big([0, T_m-\ve),L_1(J,W_{p,\mathcal{B}}^{2\theta}(\Omega))\big)\,,\quad  I_{0,\ve}\in L_1\big(J,W_{p,\mathcal{B}}^{2\theta}(\Omega)\big)
$$
be as in the proof of Proposition~\ref{P2}. Then 
$$I_\ve\in C\big((0,T_m-\ve),L_1(J,W_{p,\mathcal{B}}^{2}(\Omega))\big)\,,\qquad 
I\in C\big((0,T_m),L_1(J,W_{p,\mathcal{B}}^{2}(\Omega))\big)\,.
$$
\end{lem}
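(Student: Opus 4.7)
The argument mirrors the continuity analysis from the proof of Proposition~\ref{P1}, but carried out in the stronger topology of $L_1(J, W_{p,\mathcal{B}}^2(\Omega))$ and exploiting the additional spatial regularity of $S_\ve$, $I_\ve$, and $I_{0,\ve}$. First, applying Lemma~\ref{L1w} with exponent $2\theta > n/p$ (available in the setting of Proposition~\ref{P2} because of the running assumption $p > n$), I obtain some $\alpha>0$ with
\[
B[S_\ve, I_\ve] \in C\big([0, T_m - \ve), W_{p,\mathcal{B}}^{2\alpha}(\Omega)\big),
\]
and one may take $\alpha$ as small as convenient. The rest of the proof then proceeds from the by-now-familiar representation formula
\[
I_\ve(t, a) = \begin{cases} U_A(a, a-t)\, I_{0,\ve}(a-t), & a > t, \\[2pt] U_A(a, 0)\, B[S_\ve, I_\ve](t-a), & a \le t. \end{cases}
\]

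Next, for fixed $t\in (0,T_m-\ve)$ I would use \eqref{EST} to estimate each branch separately. For the first branch ($a > t$) one has
\[
\|U_A(a, a-t)\, I_{0,\ve}(a-t)\|_{W_{p,\mathcal{B}}^2(\Omega)} \le M\, t^{\theta - 1}\, e^{\varpi t}\, \|I_{0,\ve}(a-t)\|_{W_{p,\mathcal{B}}^{2\theta}(\Omega)},
\]
which integrated over $a\in (t,a_m)$ yields a bound proportional to $t^{\theta-1}\,\|I_{0,\ve}\|_{L_1(J,W_{p,\mathcal{B}}^{2\theta})}$, finite for $t>0$. For the second branch ($a\le t$) the analogous bound
\[
\|U_A(a, 0)\, B[S_\ve, I_\ve](t-a)\|_{W_{p,\mathcal{B}}^2(\Omega)} \le M\, a^{\alpha - 1}\, e^{\varpi a}\, \|B[S_\ve, I_\ve](t-a)\|_{W_{p,\mathcal{B}}^{2\alpha}(\Omega)}
\]
is integrable over $a\in (0,t)$ since $\alpha>0$. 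This establishes the pointwise-in-$t$ regularity $I_\ve(t)\in L_1(J,W_{p,\mathcal{B}}^2(\Omega))$.

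For continuity in $t$ I would repeat verbatim the five-integral splitting used in Proposition~\ref{P1} for times $0\le t_2\le t_1\le T$, but replace the $L_p$-norm by the $W_{p,\mathcal{B}}^2$-norm at every occurrence. Each resulting contribution tends to zero as $|t_1-t_2|\to 0$: the integrals involving $B[S_\ve,I_\ve]$ vanish by uniform continuity of $B[S_\ve,I_\ve]$ as a $W_{p,\mathcal{B}}^{2\alpha}$-valued function combined with integrability of $a\mapsto a^{\alpha-1}$; the integrals involving translates of $I_{0,\ve}$ vanish by strong continuity of translations on $L_1(J,W_{p,\mathcal{B}}^{2\theta}(\Omega))$; and the term containing the difference $U_A(a,a-t_1)-U_A(a,a-t_2)$ is handled by the strong continuity of the evolution operator on compact subsets of $\Delta_J^*:=\{0\le\sigma<a\le a_m\}$ together with Lebesgue's dominated convergence, the pointwise bound from the previous paragraph providing the required majorant.

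The main technical point is precisely the integrable singularity produced by passing from $W_{p,\mathcal{B}}^{2\theta}$ (or $W_{p,\mathcal{B}}^{2\alpha}$) to $W_{p,\mathcal{B}}^{2}$ via \eqref{EST}; this is why the argument cannot be done directly for $S_0,I_0$ without first gaining the additional spatial regularity in Proposition~\ref{P2}, and why the strictness $\alpha,\theta>0$ is crucial. Once $I_\ve\in C\big((0,T_m-\ve),L_1(J,W_{p,\mathcal{B}}^2(\Omega))\big)$ is established, the second conclusion $I\in C\big((0,T_m),L_1(J,W_{p,\mathcal{B}}^2(\Omega))\big)$ follows immediately by sending $\ve\to 0^+$ and using $I_\ve(t)=I(t+\ve)$.
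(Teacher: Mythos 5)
Your proposal is correct and follows essentially the same route as the paper: the paper likewise obtains $B[S_\ve,I_\ve]\in C\big([0,T_m-\ve),W_{p,\mathcal{B}}^{2\alpha}(\Omega)\big)$ from Lemma~\ref{L1w}, then repeats the five-integral splitting of Proposition~\ref{P1} with the operator norms of $U_A$ taken in $\ml\big(W_{p,\mathcal{B}}^{2\alpha}(\Omega),W_{p,\mathcal{B}}^{2}(\Omega)\big)$ and $\ml\big(W_{p,\mathcal{B}}^{2\theta}(\Omega),W_{p,\mathcal{B}}^{2}(\Omega)\big)$, using the integrable singularities $a^{\alpha-1}$ and $t_2^{\theta-1}$ from~\eqref{EST}, strong continuity of translations on $L_1\big(J,W_{p,\mathcal{B}}^{2\theta}(\Omega)\big)$, and the strong continuity of $U_A$ plus Lebesgue's theorem for the difference term, before letting $\ve\to 0$. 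The only cosmetic discrepancy is your justification via ``$p>n$'': the running assumption is $p>\max\{3n/4,2\}$, which is exactly what makes the interval for $2\theta$ nonempty, but this does not affect the argument.
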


\begin{proof}
We proceed analogously to the proof of Proposition~\ref{P1}.
According to Lemma~\ref{L1w} there is $\alpha>0$ such that
$$
B[S_\ve,I_\ve] \in C\big([0, T_m-\ve),W_{p,\mathcal{B}}^{2\alpha}(\Omega)\big)
$$
and we obtain from \eqref{EST}, for $0<t_2\le t_1\le T<T_m-\ve$,
\begin{align*}
||I_\ve(t_1,\cdot)&-I_\ve(t_2,\cdot)\|_{L_1(J,W_{p,\mathcal{B}}^{2}(\Omega))}\\
&\le \int_0^{t_2}\|U_A(a,0)\|_{\ml(W_{p,\mathcal{B}}^{2\alpha}(\Omega),W_{p,\mathcal{B}}^{2}(\Omega))}\,\|B[S_\ve,I_\ve](t_1-a)-B[S_\ve,I_\ve](t_2-a)\|_{W_{p,\mathcal{B}}^{2\alpha}(\Omega)}\,\rd a\\
&\quad +\int_{t_2}^{t_1}\|U_A(a,0)\|_{\ml(W_{p,\mathcal{B}}^{2\alpha}(\Omega),W_{p,\mathcal{B}}^{2}(\Omega))}\,\|B[S_\ve,I_\ve](t_1-a)\|_{W_{p,\mathcal{B}}^{2\alpha}(\Omega)}\,\rd a\\
&\quad + \int_{t_2}^{t_1}\|U_A(a,a-t_2)\|_{\ml(W_{p,\mathcal{B}}^{2\theta}(\Omega),W_{p,\mathcal{B}}^{2}(\Omega))}\,\|I_{0,\ve}(a-t_2)\|_{W_{p,\mathcal{B}}^{2\theta}(\Omega)}\,\rd a\\
&\quad + \int_{t_1}^{a_m}\big\| \big(U_A(a,a-t_1)-U_A(a,a-t_2)\big) I_{0,\ve}(a-t_1)\|_{W_{p,\mathcal{B}}^{2}(\Omega)}\,\rd a\\
&\quad + \int_{t_1}^{a_m} \| U_A(a,a-t_2)\|_{\ml(W_{p,\mathcal{B}}^{2\theta}(\Omega),W_{p,\mathcal{B}}^{2}(\Omega))}\,\| I_{0,\ve}(a-t_1)-I_{0,\ve}(a-t_2)\|_{W_{p,\mathcal{B}}^{2\theta}(\Omega)}\,\rd a\\
&\le Me^\varpi \int_0^{t_2} a^{\alpha-1}\,\|B[S_\ve,I_\ve](t_1-a)-B[S_\ve,I_\ve](t_2-a)\|_{W_{p,\mathcal{B}}^{2\alpha}(\Omega)}\,\rd a\\
&\quad +c(R) \int_{t_2}^{t_1}a^{\alpha-1}\,\rd a + t_2^{\theta-1}\int_{t_2}^{t_1}\|I_{0,\ve}(a-t_2)\|_{W_{p,\mathcal{B}}^{2\theta}(\Omega)}\,\rd a\\
&\quad + \int_{t_1}^{a_m}\big\| \big(U_A(a,a-t_1)-U_A(a,a-t_2)\big) I_{0,\ve}(a-t_1)\|_{W_{p,\mathcal{B}}^{2}(\Omega)}\,\rd a\\
&\quad + t_2^{\theta-1} \int_{t_1}^{a_m} \| I_{0,\ve}(a-t_1)-I_{0,\ve}(a-t_2)\|_{W_{p,\mathcal{B}}^{2\theta}(\Omega)}\,\rd a\,.
\end{align*}
Now, as $\vert t_1-t_2\vert \to 0$, the first integral on the right-hand side goes to zero since the function $B[S_\ve,I_\ve]\in C([0,T],W_{p,\mathcal{B}}^{2\alpha}(\Omega))$ is uniformly continuous while the second and the third integral vanish since $a\mapsto a^{\alpha-1}$ respectively $I_{0,\ve}$ are integrable. To see that the fourth integral vanishes in the limit one may use the strong continuity \cite[Equation~II.~(2.1.2)]{LQPP} of the evolution operator $U_A$ in $\ml\big(W_{p,\mathcal{B}}^{2\theta}(\Omega),W_{p,\mathcal{B}}^{2}(\Omega)\big)$  and Lebesgue's theorem. For the last integral one may use the strong continuity of the translations on $L_1\big(J,W_{p,\mathcal{B}}^{2\theta}(\Omega)\big)$.
Consequently, $I_\ve\in C\big((0,T_m-\ve),L_1(J,W_{p,\mathcal{B}}^{2}(\Omega))\big)$. Letting $\ve\to 0$ yields
$I\in C\big((0,T_m),L_1(J,W_{p,\mathcal{B}}^{2}(\Omega))\big)$.
\end{proof}

\subsection*{Proof of the $L_1$-Inequality~\eqref{L1id}}\label{APSec2}

We derive inequality~\eqref{L1id}. To this end, note from Gauss' theorem that
\begin{equation}\label{AB}
\int_\Omega \Delta u\,\rd x=\int_{\partial\Omega} \partial_\nu u\,\rd \sigma \le 0\,,\qquad u\in W_{p,\mathcal{B}}^2(\Omega)\,,\quad u\ge 0\,, 
\end{equation}
since $\partial_\nu u=0$ on $\partial\Omega$ if $\delta=1$ and $\partial_\nu u\le 0$ on $\partial\Omega$ if $\delta=0$.
Now, for $a\in J$ fixed set
$$
w(t):=U_A(a+t,a)I_0(a)\,,\quad t\in [0,a_m-a]\,.
$$
Integrating then 
$$
\frac{\rd}{\rd t} w(t)=A(a+t)w(t)\,,\quad t\in (0,a_m-a]\,,
$$
with $A$ given in \eqref{A}, we get from \eqref{AB}
\begin{align*}
\int_\Omega w(t,x)\,\rd x\le \int_\Omega w(0,x)\,\rd x-\int_0^t\int_\Omega \big(m(a+\tau,x)+r(a+\tau,x)\big)\, w(\tau,x)\,\rd x\,\rd \tau\,,
\end{align*}
and therefore
\begin{align*}
\int_\Omega U_A(a+t,a)I_0(a)\,\rd x\le \int_\Omega I_0(a)\,\rd x
-\int_0^t\int_\Omega \big(m(a+\tau)+r(a+\tau)\big)\, U_A(a+\tau,a)I_0(a)\,\rd x\,\rd \tau\,.
\end{align*}
Similarly, one derives for $t>a$ that
\begin{align*}
\int_\Omega U_A(&t-a,0)I(a,0)\,\rd x\\
&\le \int_\Omega I(a,0)\,\rd x-\int_a^t\int_\Omega \big(m(\tau-a)+r(\tau-a)\big)\, U_A(\tau-a,0)I(a,0)\,\rd x\,\rd \tau\,.
\end{align*}
We then recall~\eqref{I} and use the previous two identities to obtain
\begin{align*}
\int_0^{a_m}\int_\Omega I(t,a)\,\rd x\,\rd a&= \int_0^t\int_\Omega U_A(t-a,0) I(a,0)\,\rd x\rd a+\int_0^{a_m-t}\int_\Omega U_A(a+t,a)I_0(a)\,\rd x\,\rd a\\
&\le\int_0^t\int_\Omega I(a,0)\,\rd x\,\rd a +\int_0^{a_m-t}\int_\Omega I_0(a)\,\rd x\,\rd a\\
&\quad -
\int_0^t\int_a^t\int_\Omega \big(m(\tau-a)+r(\tau-a)\big)\, U_A(\tau-a,0)I(a,0)\,\rd x\,\rd \tau\,\rd a\\
&\quad -\int_0^{a_m-t}\int_0^t\int_\Omega \big(m(a+\tau)+r(a+\tau)\big)\, U_A(a+\tau,a)I_0(a)\,\rd x\,\rd \tau\,\rd a\\
&=\int_0^t\int_\Omega I(a,0)\,\rd x\,\rd a +\int_0^{a_m}\int_\Omega I_0(a)\,\rd x\,\rd a\\
&\quad -
\int_0^t\int_0^\tau\int_\Omega \big(m(a)+r(a)\big)\, U_A(a,0)I(\tau-a,0)\,\rd x\,\rd a\,\rd  \tau\\
&\quad -\int_0^t\int_\tau^{a_m}\int_\Omega \big(m(a)+r(a)\big)\, U_A(a,a-\tau)I_0(a-\tau)\,\rd x\,\rd a\,\rd \tau\\
&\quad - \int_{a_m-t}^{a_m}\int_\Omega I_0(a)\,\rd x\,\rd a\\
&\quad + \int_0^t\int_{a_m-t+\tau}^{a_m}\int_\Omega \big(m(a)+r(a)\big)\, U_A(a,a-\tau)I_0(a-\tau)\,\rd x\,\rd a\,\rd \tau\,.
\end{align*}
Using~\eqref{I} again we may rewrite the previous inequality as
\begin{equation}\begin{split}\label{P11}
\int_0^{a_m}\int_\Omega I(t,a)\,\rd x\,\rd a
&\le\int_0^{a_m}\int_\Omega I_0(a)\,\rd x\,\rd a +\int_0^t\int_\Omega I(\tau,0)\,\rd x\,\rd \tau \\
&\quad -
\int_0^t\int_0^{a_m}\int_\Omega \big(m(a)+r(a)\big)\,I(\tau,a)\,\rd x\,\rd a\,\rd \tau\\
&\quad - \int_{a_m-t}^{a_m}\int_\Omega I_0(a)\,\rd x\,\rd a\\
&\quad + \int_0^t\int_{a_m-t+\tau}^{a_m}\int_\Omega \big(m(a)+r(a)\big)\, U_A(a,a-\tau)I_0(a-\tau)\,\rd x\,\rd a\,\rd \tau\,.
\end{split}\end{equation}
Integrating~\eqref{E2} and using \eqref{AB} yields
\begin{equation}\begin{split}\label{P22}
\int_\Omega S(t)\,\rd x\le &\int_\Omega S_0\,\rd x+\int_0^{a_m}\int_\Omega I_0(a)\,\rd x\,\rd a+\int_0^t\int_\Omega \kappa_1\left(1-\frac{S(\tau)}{\kappa_2}\right)S(\tau)\,\rd x\,\rd \tau\\
& -\int_0^t\int_\Omega I(\tau,0)\,\rd x\,\rd \tau+\int_0^t\int_\Omega\int_0^{a_m} r(a) I(\tau,a)\,\rd a\,\rd x\,\rd\tau\,.
\end{split}\end{equation}
Adding~\eqref{P11} and~\eqref{P22} yields~\eqref{L1id}.\qed

\begin{rem}
When considering Neumann boundary conditions ($\delta=1$), then~\eqref{P11} and~\eqref{P22} are equalities and thus also~\eqref{L1id}. 
\end{rem}

\section*{Acknowledgment}
I thank Lina Sophie Schmitz for helpful discussions on the topic.



\bibliographystyle{siam}
\bibliography{AgeDiff_220203}

\end{document}